\documentclass{amsart} 


\newtheorem{theorem}{Theorem}[section]
\newtheorem{lemma}[theorem]{Lemma}
\newtheorem{proposition}[theorem]{Proposition}
\newtheorem{corollary}[theorem]{Corollary}

\theoremstyle{definition}
\newtheorem{definition}[theorem]{Definition}
\newtheorem{example}[theorem]{Example}
\newtheorem{remark}[theorem]{Remark}

\allowdisplaybreaks

\newcommand{\Z}{\mathbb{Z}}
\newcommand{\R}{\mathbb{R}}

\newcommand{\e}{\varepsilon}
\newcommand{\de}{\delta}

\makeatletter

\@addtoreset{figure}{section}
\makeatother

\makeatletter

\@addtoreset{table}{section}
\makeatother

\makeatletter
  
  \@addtoreset{equation}{section}
\makeatother

\usepackage{longtable,caption} 
\usepackage[dvipdfmx]{graphicx}
\usepackage{comment} 
\usepackage{lscape}
\usepackage{color}
\usepackage{amsmath,amssymb}
\usepackage{physics}
\usepackage[abs]{overpic}
\captionsetup[table]{format=plain, labelformat=simple, labelsep=period, font={sc, normalsize}}
\usepackage{multirow}

\begin{document}


\title[The intersection polynomials of a long virtual knot I]
{The intersection polynomials of a long virtual knot I: Definitions and properties}

\author[T. NAKAMURA]{Takuji NAKAMURA}
\address{Faculty of Education, 
University of Yamanashi,
Takeda 4-4-37, Kofu, Yamanashi 400-8510, Japan}
\email{takunakamura@yamanashi.ac.jp}

\author[Y. NAKANISHI]{Yasutaka NAKANISHI}
\address{Department of Mathematics, Kobe University, 
Rokkodai-cho 1-1, Nada-ku, Kobe 657-8501, Japan}
\email{nakanisi@math.kobe-u.ac.jp}

\author[S. SATOH]{Shin SATOH}
\address{Department of Mathematics, Kobe University, 
Rokkodai-cho 1-1, Nada-ku, Kobe 657-8501, Japan}
\email{shin@math.kobe-u.ac.jp}

\author[K. WADA]{Kodai WADA}
\address{Department of Mathematics, Kobe University, 
Rokkodai-cho 1-1, Nada-ku, Kobe 657-8501, Japan}
\email{wada@math.kobe-u.ac.jp}

\makeatletter
\@namedef{subjclassname@2020}{%
  \textup{2020} Mathematics Subject Classification}
\makeatother
\subjclass[2020]{Primary 57K12; Secondary 57K14, 57K16}

\keywords{Long virtual knot, writhe polynomial, 
intersection polynomial, crossing change, finite-type invariant, closure}

\thanks{This work was supported by JSPS KAKENHI Grant Numbers 
JP20K03621, JP22K03287, and JP23K12973.}


\begin{abstract} 
We introduce twelve polynomial invariants for long virtual knots, 
called intersection polynomials, extending and 
refining the three intersection polynomials for virtual knots.
They are defined via intersection numbers of cycles on a closed surface, 
considering the order of over- and under-crossings.
We study their fundamental properties 
including behavior under symmetries, 
crossing changes, and concatenation products.
All are finite-type invariants of degree two under crossing changes, 
but not under virtualizations, 
and we examine their relation to the closure and the values at 
$t=1$ of their derivatives.
\end{abstract} 

\maketitle


\section{Introduction}\label{sec1} 

In 1999, Kauffman \cite{Kau} introduced the notion of virtual knots  
as a generalization of classical knots, 
which naturally leads to the concept of long virtual knots. 
In 2000, 
Goussarov, Polyak, and Viro \cite{GPV} 
studied long virtual knots from the point of view of finite-type invariants 
and showed that their theory differs substantially from that of virtual knots. 
Manturov \cite{Man} proved that 
the concatenation product of certain pair of long virtual knots is noncommutative, 
and Silver and Williams~\cite{SW} constructed an infinite family of 
long virtual knots whose closures realize any prescribed virtual knot 
(see also \cite{HNNS2}). 
Polynomial invariants for long virtual knots have also been studied extensively. 
Refer to \cite{Af, BFKK, IL, IKK}, for example.

In a series of papers 
\cite{HNNS1}--\cite{HNNS4}, 
Higa with the first three authors introduced and studied 
three polynomial invariants of a virtual knot, 
called the first, second, and third intersection polynomials. 
Their construction is based on 
the intersection number of a pair of cycles on a closed surface. 
In this paper, 
we employ a similar idea to define twelve polynomial invariants 
of a long virtual knot $K$. 
These are also referred to as the intersection polynomials of $K$, 
and are denoted by 
\[F_{ab}(K;t), \ G_{ab}(K;t), \mbox{ and }
H_{ab}(K;t) \mbox{ for }a,b\in\{0,1\}.\]
In the definition, we distinguish the type of each real crossing 
according to whether the overcrossing or undercrossing 
is encountered first along $K$. 

This paper is the first in a series of two. 
Its aim is to introduce the twelve intersection polynomials 
and to study their fundamental properties. 
These invariants refine the three intersection polynomials 
previously defined for virtual knots. 
In the second paper~\cite{NNSW2} of this series, 
we introduce two supporting genera of a long virtual knot 
and establish their relationship 
with the intersection polynomials. 
We further provide  a characterization of 
each intersection polynomial.

The rest of this paper is organized as follows. 
Section~\ref{sec2} introduces the twelve polynomial invariants. 
We prove their invariance in Section~\ref{sec3}, 
and present examples of computations in Section~\ref{sec4}. 
Section~\ref{sec5} investigates their behavior under symmetries 
such as orientation-reversal 
and mirror-reflection (Theorem~\ref{thm52}). 
Section~\ref{sec6} provides formulae 
for the product of long virtual knots (Theorem~\ref{thm62}). 
In Section~\ref{sec7}, we construct 
invariants under crossing changes from the intersection polynomials 
(Theorems~\ref{thm72}). 
In Sections~\ref{sec8} and \ref{sec9}, we prove that 
the intersection polynomials are finite-type invariants 
of degree two under crossing changes (Theorem~\ref{thm82}), 
but not of any degree under virtualizations (Theorem~\ref{thm92}).  
In Section~\ref{sec10}, 
we study the relationship between 
the intersection polynomials of a long virtual knot 
and those of its closure (Proposition~\ref{prop101}), 
and establish several results on the values at $t=1$ 
of their first and second derivatives (Theorem~\ref{thm103}).


\section{Intersection polynomials}\label{sec2} 

A {\it long virtual knot diagram} is an immersed line 
in a plane ${\Bbb R}^2$ such that 
\begin{itemize}
\item[(i)] 
it is identical to the $x$-axis of ${\Bbb R}^2$ outside some $2$-disk, and 
\item[(ii)] 
it has finitely many 
transversal double points 
called {\it real crossings} or {\it virtual crossings}.
\end{itemize}
Here, a real crossing has an over/under information, 
and a virtual crossing is encircled by a small circle. 
Figure~\ref{fig:diagram} shows an example of a long virtual knot diagram 
with five real crossings $c_1,\dots,c_5$ and two virtual crossings. 
A {\it long virtual knot} is an equivalence class of 
such diagrams under generalized Reidemeister moves I--VII. 
Throughout this paper, 
all long virtual knots are oriented from $-\infty$ to $\infty$.

\begin{figure}[htbp]
  \centering
    \begin{overpic}[]{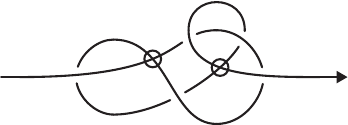}
      \put(-22,21){$-\infty$}
      \put(172,21){$\infty$}
      \put(25,28.5){$c_{1}$}
      \put(77.5,44.5){$c_{2}$}
      \put(122,43.5){$c_{3}$}
      \put(130.5,28.5){$c_{4}$}
      \put(79,2){$c_{5}$}
    \end{overpic}
  \caption{A long virtual knot diagram}
  \label{fig:diagram}
\end{figure}

Let $c_1,\dots,c_n$ be the real crossings 
of a long virtual knot diagram $D$, 
which are classified into two types as follows. 
We say that $c_i$ is {\it of type $0$} 
if we pass $c_i$ first going over and then under 
along $D$ from $-\infty$ to $\infty$. 
Otherwise $c_i$ is {\it of type $1$}. 
Refer to \cite{Af,IL,Man}. 
The set of indices of real crossings of type $a\in\{0,1\}$ 
is denoted by 
\[I_a=I_a(D)=\{i\mid \mbox{$c_i$ is of type $a$}\}.\]
For example, the diagram in Figure~\ref{fig:diagram} 
gives $I_0=\{1,3,5\}$ and $I_1=\{2,4\}$.

Let $\Sigma_g$ denote a closed, connected, and oriented surface 
of genus~$g\geq 0$. 
For a long virtual knot diagram $D$, 
a {\it surface realization} of $D$ is 
a pair of $\Sigma_g$ and a knot diagram $D'$ 
with a basepoint such that 
\begin{itemize}
\item[(i)] 
$D'$ lies in $\Sigma_g$ with no virtual crossings, 
\item[(ii)] 
the set of real crossings of $D$ 
coincides with that of $D'$, and 
\item[(iii)] the basepoint of $D'$ corresponds to 
$\pm\infty$ of $D$.
\end{itemize}
Provided there is no ambiguity, we also use the notation $D$ for $D'$. 
A long virtual knot is regarded as an equivalence class of 
such pairs $(\Sigma_g,D)$ under Reidemeister moves I--III and (de)stabilizations 
(cf.~\cite{HNNS2, Kau}). 
For example, the virtual knot diagram $D$ in Figure~\ref{fig:diagram} 
has a surface realization $(\Sigma_2,D)$ 
as shown in Figure~\ref{fig:realization}. 

\begin{figure}[htbp]
  \centering
    \begin{overpic}[]{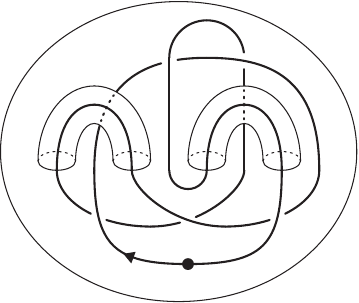}
      \put(38,32){$c_{1}$}
      \put(69,120){$c_{2}$}
      \put(121,120){$c_{3}$}
      \put(136,32){$c_{4}$}
      \put(86,30){$c_{5}$}
    \end{overpic}
  \caption{A surface realization $(\Sigma_2,D)$}
  \label{fig:realization}
\end{figure}

We apply a smoothing operation to a surface realization 
$(\Sigma_g,D)$ at a real crossing $c_{i}$ 
to obtain a pair of oriented loops on $\Sigma_g$. 
We denote by $\alpha_i$ (resp. $\beta_i$) 
the loop that does not contain (resp. contains) 
the basepoint of $D$. 
These loops are regarded as homology cycles on $\Sigma_g$. 
As an example, consider a surface realization $(\Sigma_1,D)$ 
with two real crossings $c_1$ and $c_2$ as shown on 
the left of Figure~\ref{fig:cycles}. 
The cycles $\alpha_1$ and $\beta_1$ are 
as depicted in the center of the figure, 
while $\alpha_2$ and $\beta_2$ are shown on the right.

\begin{figure}[htbp]
  \centering
  \vspace{0.5em}
    \begin{overpic}[]{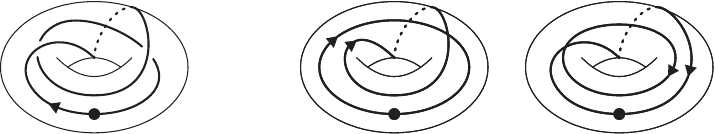}
      \put(8,43){$c_{1}$}
      \put(76,39){$c_{2}$}
      \put(97,24.85){$\xrightarrow[\text{at $c_1,c_2$}]{\text{smoothing}}$}
            \put(205,65){$\alpha_{1}$}
      \put(219.5,9){$\nwarrow$}
      \put(230,0){$\beta_{1}$}
      \put(291,56.7){$\alpha_{2}$}
      \put(327.5,9){$\nwarrow$}
      \put(338,0){$\beta_{2}$}
          \end{overpic}
  \caption{A surface realization $(\Sigma_1,D)$ with four cycles}
  \label{fig:cycles}
\end{figure}

For two cycles $\gamma, \gamma'\in \{\alpha_{1},\dots,\alpha_{n}, 
\beta_{1},\dots,\beta_{n}\}$, 
we denote by $\gamma\cdot\gamma'\in\Z$ 
the intersection number of $\gamma$ and $\gamma'$, 
which is invariant under (de)stabilization of $\Sigma_{g}$. 
We denote by $\varepsilon_i=\varepsilon(c_i)$ 
the sign of a real crossing $c_i$. 

\begin{theorem}[{\cite[Lemma 4.1]{HNNS2}}]\label{thm21}
Let $D$ be a diagram of a long virtual knot $K$, 
and $c_1,\dots,c_n$ the real crossings of $D$. 
For any $a\in\{0,1\}$, the Laurent polynomial 
\[W_a(D;t)=\sum_{i\in I_a}
\varepsilon_i(t^{\alpha_i\cdot\beta_i}-1)\in{\Z}[t,t^{-1}]\]
is an invariant of $K$. 
\qed
\end{theorem}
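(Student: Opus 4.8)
The plan is to verify that $W_a(D;t)$ is unchanged under each of the moves that generate long virtual knot equivalence in the surface-realization model, namely Reidemeister moves I--III and (de)stabilizations. Invariance under (de)stabilization is already built into the setup, since the excerpt records that each intersection number $\gamma\cdot\gamma'$ is a stabilization invariant; hence the entire sum $W_a(D;t)$ is automatically unaffected by (de)stabilization, and it remains only to treat the three Reidemeister moves. Throughout, I would track what happens to the index set $I_a$, to the signs $\varepsilon_i$, and to the cycles $\alpha_i,\beta_i$ (equivalently, to the integer $\alpha_i\cdot\beta_i$) under each local modification.

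For a Reidemeister I move, one real crossing $c_i$ is created or destroyed. The key observation is that for a kink, the smoothing at $c_i$ produces one of the two loops as a small nullhomotopic circle, so $\alpha_i\cdot\beta_i=0$ and the contributed term $\varepsilon_i(t^{0}-1)=0$; thus $W_a(D;t)$ is unchanged regardless of whether $i\in I_a$. For a Reidemeister II move, two real crossings $c_i,c_j$ are created or destroyed; they have opposite signs, and I would check that they have the \emph{same} type (both in $I_0$ or both in $I_1$, since the two strands pass each other in the same over/under order relative to the basepoint) and that $\alpha_i\cdot\beta_i=\alpha_j\cdot\beta_j$, so their contributions $\varepsilon_i(t^{\alpha_i\cdot\beta_i}-1)+\varepsilon_j(t^{\alpha_j\cdot\beta_j}-1)$ cancel. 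For a Reidemeister III move, the three real crossings persist, their signs and types are unchanged, and one must show that the cycles $\alpha_i,\beta_i$ are unchanged as homology classes for each of the three crossings; this follows because the move is supported in a disk and the strand segments rearranged inside that disk are nullhomotopic rel endpoints, so the homology classes of the loops obtained by smoothing are preserved.

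The main obstacle I expect is the Reidemeister III analysis: one must carefully set up the smoothing at each of the three crossings in all the relevant orientation/sign configurations and confirm that passing the third strand across the crossing of the other two changes $\alpha_i$ and $\beta_i$ only by a nullhomologous loop (so that $\alpha_i\cdot\beta_i$ is preserved), and simultaneously that the ``type'' label (which depends on the cyclic order in which the over- and under-arcs at $c_i$ are met from $-\infty$ to $\infty$) is not disturbed. Reducing the number of cases via the standard trick---using R2 moves to normalize all crossing signs in the R3 picture to a single convenient configuration---would keep this manageable. Once the local invariance under I--III and (de)stabilization is established, $W_a(D;t)$ depends only on the long virtual knot $K$, which is the assertion. \qed
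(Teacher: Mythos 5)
Your proposal is correct and follows essentially the same route as the cited proof in \cite{HNNS2} (and the paper's own Lemmas~\ref{lem31}--\ref{lem32} for the analogous statements): work with surface realizations, use invariance of intersection numbers under (de)stabilization, note that a Reidemeister~I crossing has $\alpha_i=0$ so its term $\varepsilon_i(t^{0}-1)$ vanishes, that the two Reidemeister~II crossings have the same type, equal $\alpha_i\cdot\beta_i$, and opposite signs so they cancel, and that Reidemeister~III preserves signs, types, and the homology classes of the smoothed cycles. No gaps beyond routine case-checking.
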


The Laurent polynomial in Theorem~\ref{thm21} is called 
the {\it $a$-writhe polynomial} of $K$, 
and is denoted by $W_a(K;t)$ 
for $a\in\{0,1\}$. 
For example, since $(\Sigma_1,D)$ in Figure \ref{fig:cycles} satisfies 
$I_0=\{1,2\}$, $I_1=\emptyset$, 
$\e_1=\e_2=1$, $\alpha_1\cdot\beta_1=-1$, 
and $\alpha_2\cdot\beta_2=1$, 
we have 
\[W_0(K;t)=t-2+t^{-1}\mbox{ and }W_1(K;t)=0.\]

\begin{remark}
Let $\gamma_D$ denote the cycle on $\Sigma_g$ presented by $D$. 
Since $\alpha_i+\beta_i=\gamma_D$ holds, 
we have 
\[\alpha_i\cdot\beta_i=\alpha_i\cdot(\gamma_D-\alpha_i)
=\alpha_i\cdot\gamma_D.\] 
In~\cite{HNNS2}, we used $\alpha_i\cdot\gamma_D$ instead of 
$\alpha_i\cdot\beta_i$ to define the $0$- and $1$-writhe polynomials. 
\end{remark}

For each $a,b\in \{0,1\}$, we put 
\begin{align*}
&f_{ab}(D;t)=\sum_{i\in I_a,\,j\in I_b}
\varepsilon_i\varepsilon_j(t^{\alpha_i\cdot\alpha_j}-1),\\ 
&g_{ab}(D;t)=\sum_{i\in I_a,\,j\in I_b}
\varepsilon_i\varepsilon_j(t^{\alpha_i\cdot\beta_j}-1),\mbox{ and }\\
&h_{ab}(D;t)=\sum_{i\in I_a,\,j\in I_b}
\varepsilon_i\varepsilon_j(t^{\beta_i\cdot\beta_j}-1).
\end{align*}
We denote by $\omega_a=\omega_a(D)$ 
the sum of signs of real crossings of $D$ of type $a\in\{0,1\}$; 
that is, 
$\omega_a=\sum_{i\in I_a}\varepsilon_i$. 
We call $\omega_a$ the {\it $a$-writhe} of $D$. 

\begin{theorem}\label{thm22}
Let $D$ be a diagram of a long virtual knot $K$. 
For any $a,b\in\{0,1\}$, the Laurent polynomials 
\begin{align*}
F_{ab}(D;t) &=f_{ab}(D;t), \\
G_{ab}(D;t) &=g_{ab}(D;t)-\omega_b W_a(K;t), \mbox{ and}\\
H_{ab}(D;t) &=h_{ab}(D;t)-\omega_a W_b(K;t)
-\omega_b W_a(K;t^{-1})
\end{align*}
are invariants of $K$. 
\end{theorem}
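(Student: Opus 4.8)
The plan is to prove invariance under the moves in the surface-realization presentation: Reidemeister moves I--III together with (de)stabilizations, exactly as was done for the $a$-writhe polynomials in Theorem~\ref{thm21}. Since $\gamma\cdot\gamma'$ is already known to be stabilization-invariant, the three polynomials $f_{ab}, g_{ab}, h_{ab}$ are each individually stabilization-invariant, and the correction terms involve only $\omega_a$, $\omega_b$, and the already-established invariants $W_a(K;t)$; hence stabilization invariance of $F_{ab}, G_{ab}, H_{ab}$ is immediate. So the real work is the three Reidemeister moves, and among these, moves I and II are comparatively easy while move III is the main obstacle.

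First I would treat Reidemeister move~I. A move~I either creates or deletes a single real crossing $c_k$ whose smoothing yields a null-homotopic loop on one side; thus one of $\alpha_k, \beta_k$ is homologically trivial. Accordingly $\alpha_k\cdot\alpha_j=0$ or $\beta_k\cdot\beta_j=0$ for the relevant pairings, and the only surviving contributions come from terms already accounted for by the $\omega$-corrections. I expect that $f_{ab}$ is unchanged outright, while the changes in $g_{ab}$ and $h_{ab}$ are exactly matched by the changes in $\omega_b W_a$ (respectively $\omega_a W_b$ and $\omega_b W_a(t^{-1})$), using that $W_a(K;t)$ is already invariant and that the new crossing contributes $\varepsilon_k(t^0-1)=0$ to $W_a$ but does shift $\omega_a$ by $\varepsilon_k$. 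The bookkeeping here is routine once one records how $\alpha_i, \beta_i$ for the old crossings $i\neq k$ are affected (they are not, up to homology).

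Next, Reidemeister move~II creates or deletes a pair of real crossings $c_k, c_\ell$ of opposite sign, $\varepsilon_k=-\varepsilon_\ell$, and of the \emph{same} type; moreover the cycles satisfy $\alpha_k=\pm\alpha_\ell$ and $\beta_k=\pm\beta_\ell$ up to homology (depending on the co-orientation of the two strands). One checks that every term indexed by $\{k,\ell\}$ cancels in pairs: the diagonal terms $(k,k),(\ell,\ell),(k,\ell),(\ell,k)$ sum to zero because the signs $\varepsilon_i\varepsilon_j$ alternate while the exponents agree, and each off-diagonal term $(k,j)$ with $j\notin\{k,\ell\}$ is cancelled by $(\ell,j)$. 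The same cancellation kills the contribution of the new crossings to $\omega_a$ (since $\varepsilon_k+\varepsilon_\ell=0$) and to $W_a$, so all correction terms are likewise unchanged. This is a short computation.

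The hard part will be Reidemeister move~III, where three real crossings are involved and their types as well as their associated cycles $\alpha_i, \beta_i$ can all change simultaneously. The strategy is to fix a local model of the move, enumerate the (finitely many) configurations of over/under strands and orientations, and in each case track how each of the three pairs $(\alpha_i,\beta_i)$ changes in $H_1(\Sigma_g)$. The key homological input is that before and after the move the three crossings have the same signs and the same \emph{types} (the order of over/undercrossing along $D$ is preserved by move~III), so $I_a$ is unchanged; what changes are the loops, by homologies of the form $\alpha_i\mapsto\alpha_i\pm\alpha_j$ coming from the third strand sliding past. One then verifies that the resulting change in each of $f_{ab},g_{ab},h_{ab}$ is absorbed by the change in the $\omega$-weighted $W$-terms. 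I would reduce the case analysis using the symmetries established for the $W_a$ (and, if available, an observation that it suffices to check one representative move~III, the others following by planar isotopy, mirror, and orientation-reversal), and I would lean on Theorem~\ref{thm21} to avoid re-deriving the $W_a$-part of the cancellation. Completing this case analysis — and organizing it so the computation stays manageable — is where the proof's weight lies.
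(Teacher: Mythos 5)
Your overall scheme --- reduce to stabilizations plus Reidemeister moves I--III, invoke the already-established invariance of $W_a(K;t)$, and match the move-I discrepancies of $g_{ab}$ and $h_{ab}$ against the shift of $\omega_a,\omega_b$ --- is exactly the paper's scheme (its Lemmas~\ref{lem31} and \ref{lem32}). Your treatment of moves I and II is essentially right, modulo small imprecisions: in move I the removed kink satisfies $\alpha_1=0$ and $\beta_1=\gamma_D$, so $\beta_1\cdot\beta_j=\alpha_j\cdot\beta_j$ is in general \emph{not} zero --- that is precisely what produces the $\varepsilon_1 W_a$-type discrepancies; and in move II one has $\alpha_1=\alpha_2$ on the nose (no sign ambiguity), since the two loops differ only by arcs inside the disk supporting the move, which is what your pairwise cancellation actually needs.

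Where the proposal goes wrong is move III, which you single out as ``the hard part.'' All modifications effected by a move III take place inside a contractible disk on $\Sigma_g$: each of the three crossings persists with the same sign and the same type, and the two points of the diagram at each crossing move only within that disk, so every loop $\alpha_i$ (hence every $\beta_i$) changes only by arcs inside the disk and its homology class is unchanged. There is no transformation of the form $\alpha_i\mapsto\alpha_i\pm\alpha_j$. Consequently $f_{ab}$, $g_{ab}$, $h_{ab}$ are invariant outright under move III (this is the content of Lemma~\ref{lem31}, where the only nontrivial check is the move-II cancellation). Moreover your planned mechanism could not work even in principle: since signs and types are preserved, $\omega_a$, $\omega_b$, and $W_a(K;t)$ are all unchanged under move III, so the $\omega$-weighted correction terms have nothing to absorb --- if $f,g,h$ really changed, the stated formulas would simply fail to be invariant. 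The genuine content of the proof is move I and the cancellation against the shift $\omega_a\mapsto\omega_a+\varepsilon_1$; the long case analysis you defer is unnecessary and, as described, rests on a false expectation about how the cycles transform.
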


The twelve Laurent polynomials in Theorem~\ref{thm22} are collectively 
called the {\it intersection polynomials} of a long virtual knot $K$, 
and are denoted by $F_{ab}(K;t)$, $G_{ab}(K;t)$, and $H_{ab}(K;t)$, 
respectively. 
We give the proof of Theorem~\ref{thm22} in Section~\ref{sec3}. 

A Laurent polynomial $f(t)\in{\Z}[t,t^{-1}]$ is called {\it reciprocal} 
if $f(t^{-1})=f(t)$ holds. 

\begin{lemma}\label{lem23}
For a long virtual knot $K$, we have the following. 
\begin{enumerate}
\item
$F_{01}(K;t)=F_{10}(K;t^{-1})$ and $H_{01}(K;t)=H_{10}(K;t^{-1})$. 
\item
$F_{00}(K;t)$, $F_{11}(K;t)$, $H_{00}(K;t)$, and 
$H_{11}(K;t)$ are reciprocal. 
\end{enumerate}
\end{lemma}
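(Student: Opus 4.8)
The plan is to exploit the fact that reversing the roles of the two indices $i,j$ in each double sum, together with swapping the two cycles in each intersection pairing, produces a sign flip. Concretely, for cycles $\gamma,\gamma'$ on $\Sigma_g$ one has $\gamma\cdot\gamma'=-\gamma'\cdot\gamma$, hence $t^{\gamma\cdot\gamma'}-1$ and $t^{\gamma'\cdot\gamma}-1$ are related by $t\mapsto t^{-1}$. Applying this to $f_{ab}$, $g_{ab}$, $h_{ab}$ and interchanging the summation indices gives
\[
f_{ab}(D;t)=f_{ba}(D;t^{-1}),\quad
g_{ab}(D;t)=\sum_{i\in I_a,\,j\in I_b}\e_i\e_j\bigl(t^{\alpha_i\cdot\beta_j}-1\bigr),\quad
h_{ab}(D;t)=h_{ba}(D;t^{-1}),
\]
where the $g$-case needs a little more care because $\alpha_i\cdot\beta_j$ and $\alpha_j\cdot\beta_i$ are not simply negatives of each other; I will return to that point. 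Once these symmetry identities at the level of $f,g,h$ are in hand, I substitute them into the definitions $F_{ab}=f_{ab}$, $H_{ab}=h_{ab}-\omega_a W_b-\omega_b W_a(t^{-1})$ from Theorem~\ref{thm22}, using also the evident symmetry of the correction terms under $a\leftrightarrow b$ combined with $t\mapsto t^{-1}$ and the fact (from Theorem~\ref{thm21}) that $W_a(K;t)$ is a genuine invariant.

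For part~(i), take $a=0$, $b=1$. The identity $F_{01}(D;t)=F_{10}(D;t^{-1})$ follows immediately from $f_{01}(D;t)=f_{10}(D;t^{-1})$, which is just relabeling $i\leftrightarrow j$ and using $\alpha_j\cdot\alpha_i=-\alpha_i\cdot\alpha_j$. For $H_{01}$, I combine $h_{01}(D;t)=h_{10}(D;t^{-1})$ with the observation that under $t\mapsto t^{-1}$ the term $-\omega_0 W_1(K;t)-\omega_1 W_0(K;t^{-1})$ becomes $-\omega_0 W_1(K;t^{-1})-\omega_1 W_0(K;t)$, which is exactly the correction term appearing in $H_{10}$; hence $H_{01}(K;t)=H_{10}(K;t^{-1})$.

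For part~(ii), I specialize the same symmetry identities to $a=b$. Then $f_{aa}(D;t)=f_{aa}(D;t^{-1})$ and $h_{aa}(D;t)=h_{aa}(D;t^{-1})$ are reciprocal on the nose, and the correction term in $H_{aa}$, namely $-\omega_a W_a(K;t)-\omega_a W_a(K;t^{-1})$, is visibly invariant under $t\mapsto t^{-1}$; therefore $F_{aa}$ and $H_{aa}$ are reciprocal for $a\in\{0,1\}$. This gives all four statements in~(ii).

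The main obstacle is the $g_{ab}$ computation, which is not needed for the lemma but clarifies why $G$ does \emph{not} appear in either part: the pairing $\alpha_i\cdot\beta_j$ is \emph{not} antisymmetric in $(i,j)$ — swapping $i$ and $j$ replaces it by $\alpha_j\cdot\beta_i$, and there is no general relation $\alpha_j\cdot\beta_i=-\alpha_i\cdot\beta_j$ since $\alpha$ and $\beta$ play asymmetric roles relative to the basepoint. So the only real subtlety is to make sure I invoke antisymmetry of the intersection form only for the $f$- and $h$-type sums, where both slots carry the same kind of cycle ($\alpha$–$\alpha$ or $\beta$–$\beta$), and that the arithmetic on the correction terms — keeping track of which of $W_a(K;t)$, $W_a(K;t^{-1})$ appears — is done correctly. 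Everything else is bookkeeping with the definitions from Theorem~\ref{thm22}.
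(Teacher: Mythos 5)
Your proof is correct and follows essentially the same route as the paper: antisymmetry of the intersection form ($\gamma\cdot\gamma'=-\gamma'\cdot\gamma$) plus relabeling the summation indices, applied to $f_{ab}$ and $h_{ab}$, with the correction terms $-\omega_a W_b(K;t)-\omega_b W_a(K;t^{-1})$ checked to match under $a\leftrightarrow b$ and $t\mapsto t^{-1}$ (the paper leaves this last bookkeeping as ``similarly,'' which you carry out explicitly). Your aside on why the $\alpha_i\cdot\beta_j$ pairing admits no such symmetry, and hence why $G_{ab}$ is absent from the lemma, is accurate.
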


\begin{proof}
(i) Since 
$\alpha_i\cdot\alpha_j=-\alpha_j\cdot\alpha_i$ holds 
for any $i$ and $j$, 
we have 
\[
F_{10}(K;t^{-1})=\sum_{i\in I_1,\, j\in I_0}
\varepsilon_i\varepsilon_j(t^{-\alpha_i\cdot\alpha_j}-1)
=\sum_{j\in I_0, \, i\in I_1}
\varepsilon_j\varepsilon_i(t^{\alpha_j\cdot\alpha_i}-1)
=F_{01}(K;t).\]
The other equation can be proved similarly. 

(ii) It holds that 
\[
F_{00}(K;t^{-1})=\sum_{i,j\in I_0}
\varepsilon_i\varepsilon_j(t^{-\alpha_i\cdot\alpha_j}-1)
=\sum_{i,j\in I_0}
\varepsilon_j\varepsilon_i(t^{\alpha_j\cdot\alpha_i}-1)
=F_{00}(K;t).\]
The other equations can be proved similarly. 
\end{proof}

A long virtual knot is called {\it classical} 
if it is presented by a diagram in ${\R}^2$ with no virtual crossings. 

\begin{lemma}\label{lem24}
If $K$ is a long classical knot, 
then we have the following.  
\begin{enumerate}
\item
$W_a(K;t)=0$ for any $a\in\{0,1\}$. 
\item
$X_{ab}(K;t)=0$ 
for any $X\in\{F,G,H\}$ and $a,b\in\{0,1\}$. 
\end{enumerate}
\end{lemma}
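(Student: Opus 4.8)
\textbf{Proof proposal for Lemma~\ref{lem24}.}
The plan is to reduce everything to the single observation that a long classical knot admits a surface realization with $g=0$, i.e.\ on the $2$-sphere $\Sigma_0$. First I would recall that for a diagram $D$ in $\R^2$ with no virtual crossings, the pair $(\Sigma_0,D)$ (viewing $\R^2$ as $S^2$ minus a point, with the basepoint placed at that point) is a surface realization in the sense of Section~\ref{sec2}. Since $H_1(\Sigma_0;\Z)=0$, every cycle obtained by smoothing — in particular each $\alpha_i$ and each $\beta_i$ — is null-homologous, so $\alpha_i\cdot\beta_j=\alpha_i\cdot\alpha_j=\beta_i\cdot\beta_j=0$ for all $i,j$. (Here I use that the intersection numbers $\gamma\cdot\gamma'$ are invariant under (de)stabilization, as noted just before Theorem~\ref{thm21}, so they may be computed in this genus-zero realization.)

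For part~(i), plugging $\alpha_i\cdot\beta_i=0$ into the defining formula of Theorem~\ref{thm21} gives $W_a(D;t)=\sum_{i\in I_a}\varepsilon_i(t^0-1)=0$ for each $a\in\{0,1\}$, hence $W_a(K;t)=0$. For part~(ii), the same vanishing of exponents makes every summand $t^{(\,\cdot\,)}-1$ equal to $0$, so $f_{ab}(D;t)=g_{ab}(D;t)=h_{ab}(D;t)=0$; combining this with $W_a(K;t)=0$ from part~(i) in the formulas of Theorem~\ref{thm22} yields $F_{ab}(K;t)=G_{ab}(K;t)=H_{ab}(K;t)=0$ for all $a,b\in\{0,1\}$.

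There is essentially no obstacle here: the only point that needs a sentence of care is justifying that a classical diagram really does give a genus-zero surface realization and that the homological intersection numbers used in the definitions coincide with those computed on $\Sigma_0$ — both of which are immediate from the setup in Section~\ref{sec2} and the stated (de)stabilization invariance. Everything else is a direct substitution.
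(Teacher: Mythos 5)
Your proposal is correct and follows the same route as the paper: pass to a genus-zero surface realization $(\Sigma_0,D)$, observe that all intersection numbers among the cycles vanish on the $2$-sphere, and conclude that every summand $t^{(\,\cdot\,)}-1$ in the defining formulas is zero. The paper's proof is just a two-sentence version of this argument, so no further comment is needed.
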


\begin{proof}
By definition, $K$ is presented by some surface realization 
$(\Sigma_0,D)$. 
The intersection number of any pair of cycles 
vanishes on a $2$-sphere $\Sigma_0$. 
\end{proof}


\section{Proof of Theorem~\ref{thm22}}\label{sec3} 

Let $(\Sigma_g,D)$ be a surface realization of a long virtual knot $K$. 
To prove Theorem~\ref{thm22}, we prepare Lemmas~\ref{lem31} and \ref{lem32} 
stated below. 

\begin{lemma}\label{lem31}
Let $(\Sigma_g,D')$ be a surface realization obtained from $(\Sigma_g,D)$  
by a Reidemeister move {\rm II} or {\rm III}. 
For any $a,b\in \{0,1\}$, we have 
\[f_{ab}(D;t)=f_{ab}(D';t), \ 
g_{ab}(D;t)=g_{ab}(D';t), \mbox{ and }
h_{ab}(D;t)=h_{ab}(D';t).\]
\end{lemma}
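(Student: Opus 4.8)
The plan is to verify the invariance of $f_{ab}$, $g_{ab}$, and $h_{ab}$ under each type of move separately, tracking how the cycles $\alpha_i,\beta_i$ change. The key observation is that a Reidemeister II or III move does not change the type of any surviving real crossing, and — crucially — does not change the homology classes of the associated cycles. For a Reidemeister III move, no crossings are created or destroyed; the three crossings involved persist, and one checks that the loops obtained by smoothing at each of them represent the same homology classes before and after the move (the arcs are merely pushed across one another, a homotopy on $\Sigma_g$). Since $f_{ab},g_{ab},h_{ab}$ depend only on the crossing set, their signs, their types, and the intersection numbers of the homology classes, and none of these data change, the three polynomials are unchanged.

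For a Reidemeister II move, two real crossings $c_p,c_q$ are created (or destroyed) and the remaining crossings keep their cycles up to homology. First I would observe that $c_p$ and $c_q$ have opposite signs $\varepsilon_p=-\varepsilon_q$. Next I would determine their types: depending on the relative orientation of the two strands, either both are of the same type $a$, or the move is of a mixed kind; in all cases the two new crossings contribute in canceling pairs. Concretely, smoothing at $c_p$ and at $c_q$ yields cycles whose relevant homology classes coincide in pairs (e.g. $\alpha_p=\alpha_q$ and $\beta_p=\beta_q$, or the bigon loop is null-homologous), so for every other crossing $c_i$ the terms indexed by $(i,p)$ and $(i,q)$ cancel in each sum because $\varepsilon_p\varepsilon_i(t^{\,\cdot}-1)+\varepsilon_q\varepsilon_i(t^{\,\cdot}-1)=0$, and likewise for $(p,i),(q,i)$ and for the four self/cross terms among $\{p,q\}$. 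Hence $f_{ab},g_{ab},h_{ab}$ are unchanged.

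The bookkeeping obstacle — and the step I expect to be the main one — is the case analysis for Reidemeister II: there are several configurations according to whether the two strands are coherently or anti-coherently oriented, and according to which of $c_p,c_q$ is encountered first along $D$ (which governs the types). One must check in each configuration that the new crossings land in types that make the contributions cancel, and that the homology classes pair up as claimed; the bigon disk bounded by the two strands shows that the "inner" loop created at each new crossing is null-homologous (or that the two inner loops are homologous with a sign matching the crossing signs), which is what forces the cancellation regardless of the ambient genus. A helpful reduction is to note that the intersection form is bilinear and skew-symmetric, so once the homology identities between the new cycles are established, the cancellation in all twelve polynomials is automatic and need not be repeated for $F$ versus $G$ versus $H$.

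For Reidemeister III the analogous point is milder: one fixes the three strands, lists the (finitely many) sign/orientation patterns, and checks that pushing the middle strand across the crossing of the other two does not alter any of the three homology classes obtained by smoothing at the three crossings, so all intersection numbers — and therefore all three polynomials — are preserved. I would present the Reidemeister II argument in detail and indicate that the Reidemeister III case follows by the same mechanism, since there the crossing data are literally unchanged and only an isotopy of arcs on $\Sigma_g$ is performed.
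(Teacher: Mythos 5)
Your proposal follows essentially the same route as the paper's proof: for a Reidemeister III move all crossing data and the homology classes of the smoothed cycles are unchanged, and for a Reidemeister II move the two cancelling crossings satisfy $\varepsilon_p=-\varepsilon_q$ and $\alpha_p=\alpha_q$ (hence $\beta_p=\beta_q$), so all terms involving them cancel in pairs while the remaining terms are untouched. The one point to tighten is your hedge about a ``mixed kind'' of Reidemeister II: since the same strand is the overstrand at both crossings of the bigon, the two crossings are always of the same type --- exactly the fact the paper records --- and this is needed, because if the types differed the two families of terms would land in different polynomials $x_{ab}$ and the pairwise cancellation within a fixed $(a,b)$ would not go through.
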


\begin{proof}
The proof of the invariance 
is essentially the same as that of the intersection polynomials for 
a virtual knot, as presented in \cite{HNNS1}. 
For example, assume that $(\Sigma_g,D')$ is obtained from $(\Sigma_g,D)$ 
by a Reidemeister move II canceling a pair of 
real crossings $c_1$ and $c_2$ of $D$. 
We remark that $\alpha_1=\alpha_2$ and  
$\varepsilon_1=-\varepsilon_2$ hold, 
and that $c_1$ and $c_2$ are of the same type. 
If both $c_1$ and $c_2$ are of type $0$, 
then we have 
\begin{align*}
f_{00}(D)-f_{00}(D') &=
\sum_{i,j\in\{1,2\}}\varepsilon_i\varepsilon_j
(t^{\alpha_i\cdot\alpha_j}-1)\\
&\quad+\sum_{j\in I_0(D)\setminus\{1,2\}}
\varepsilon_1\varepsilon_j(t^{\alpha_1\cdot\alpha_j}-1)
+\sum_{j\in I_0(D)\setminus\{1,2\}}
\varepsilon_2\varepsilon_j(t^{\alpha_2\cdot\alpha_j}-1)\\
&\quad+\sum_{i\in I_0(D)\setminus\{1,2\}}
\varepsilon_i\varepsilon_1(t^{\alpha_i\cdot\alpha_1}-1)
+\sum_{i\in I_0(D)\setminus\{1,2\}}
\varepsilon_i\varepsilon_2(t^{\alpha_i\cdot\alpha_2}-1)\\
&=0.
\end{align*}
\end{proof}

\begin{lemma}\label{lem32}
Let $(\Sigma_g,D')$ be a surface realization obtained from $(\Sigma_g,D)$ by 
a Reidemeister move {\rm I} removing 
a real crossing $c_1$ of $D$ 
as shown in {\rm Figure~\ref{R1}}. 
For any $a,b\in \{0,1\}$, we have the following. 
\begin{enumerate}
\item
$f_{ab}(D;t)=f_{ab}(D';t)$. 
\item
$g_{ab}(D;t)-g_{ab}(D';t)=
\begin{cases}
0 & \mbox{for }1\not\in I_b(D), \\
\varepsilon_1 W_a(K;t) & \mbox{for } 1\in I_b(D).
\end{cases}$

\item 
$h_{ab}(D;t)-h_{ab}(D';t)$\\
$=
\begin{cases}
\varepsilon_1(W_b(K;t)+W_a(K;t^{-1})) 
& \mbox{for } a=b\mbox{ and }1\in I_a(D)=I_b(D),\\
0 & \mbox{for }a=b\mbox{ and }1\not\in I_a(D)=I_b(D), \\
\varepsilon_1 W_b(K;t) & \mbox{for } 
a\ne b \mbox{ and } 1\in I_a(D), \\
\varepsilon_1 W_a(K;t^{-1}) & \mbox{for }
a\ne b \mbox{ and }1\in I_b(D). 
\end{cases}$
\end{enumerate}
\end{lemma}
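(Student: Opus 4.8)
The plan is to analyze how a single Reidemeister move~I affects the cycles $\alpha_i,\beta_i$ associated to each real crossing, and then to track the resulting changes in $f_{ab}$, $g_{ab}$, and $h_{ab}$ crossing by crossing. First I would fix notation: write $D'$ for the diagram obtained by removing the kink at $c_1$, so that the real crossings of $D'$ are $c_2,\dots,c_n$, and for $i\geq 2$ the cycles $\alpha_i,\beta_i$ computed in $D$ agree with those computed in $D'$ as homology classes on $\Sigma_g$ (the small disk bounding the kink is null-homotopic, hence contributes nothing to any intersection number). The only genuinely new data is the cycle $\alpha_1$: smoothing the kink at $c_1$ produces a loop $\alpha_1$ that bounds a disk in $\Sigma_g$, so $\alpha_1$ is null-homologous and $\alpha_1\cdot\gamma=0$ for every cycle $\gamma$. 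Meanwhile $\beta_1=\gamma_D-\alpha_1$ is homologous to $\gamma_D$, the cycle carried by the whole diagram, which equals $\gamma_{D'}$.

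With these two facts in hand, each part is a bookkeeping computation. For~(i), every term of $f_{ab}(D;t)$ involving the index~$1$ has an exponent of the form $\alpha_1\cdot\alpha_j$ or $\alpha_i\cdot\alpha_1$, which vanishes, so $t^{0}-1=0$ kills it; the remaining terms are exactly $f_{ab}(D';t)$. For~(ii), in $g_{ab}(D;t)=\sum_{i\in I_a,\,j\in I_b}\varepsilon_i\varepsilon_j(t^{\alpha_i\cdot\beta_j}-1)$, a term with $i=1$ again has exponent $\alpha_1\cdot\beta_j=0$ and vanishes, while a term with $j=1$ has exponent $\alpha_i\cdot\beta_1=\alpha_i\cdot\gamma_D=\alpha_i\cdot\beta_i$, using the remark $\alpha_i\cdot\gamma_D=\alpha_i\cdot\beta_i$ from the excerpt. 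Hence if $1\notin I_b(D)$ the sum is unchanged, and if $1\in I_b(D)$ the extra contribution is $\sum_{i\in I_a}\varepsilon_i\varepsilon_1(t^{\alpha_i\cdot\beta_i}-1)=\varepsilon_1 W_a(K;t)$. Part~(iii) is the same idea applied to $h_{ab}(D;t)=\sum_{i\in I_a,\,j\in I_b}\varepsilon_i\varepsilon_j(t^{\beta_i\cdot\beta_j}-1)$: a term with $i=1$ contributes $t^{\beta_1\cdot\beta_j}-1=t^{\gamma_D\cdot\beta_j}-1=t^{\alpha_j\cdot\beta_j}\cdot$(sign)$-1$ after using $\beta_1\cdot\beta_j=\gamma_D\cdot\beta_j=-\beta_j\cdot\gamma_D=-\alpha_j\cdot\beta_j$, giving rise to a $W_b(K;t^{-1})$-type contribution, and symmetrically a term with $j=1$ gives a $W_a(K;t)$-type contribution. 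One then splits into the four cases according to whether $c_1$ lies in $I_a$, in $I_b$, in both (possible only when $a=b$), keeping careful track that when $a=b$ and $1\in I_a=I_b$ the index pair $(1,1)$ is counted once and yields $\varepsilon_1^2(t^{\beta_1\cdot\beta_1}-1)=0$, so both the $i=1$ and $j=1$ contributions survive and add.

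The one point demanding care — the main obstacle, such as it is — is getting the signs and the inversion $t\leftrightarrow t^{-1}$ exactly right in part~(iii), since $h_{ab}$ is not symmetric in the two types and the identity $\beta_1\cdot\beta_j=-\alpha_j\cdot\beta_j$ introduces a sign in the exponent that becomes a $t^{-1}$. Concretely, I would verify that the $i=1$ terms produce $\varepsilon_1\sum_{j\in I_b}\varepsilon_j(t^{-\alpha_j\cdot\beta_j}-1)=\varepsilon_1 W_b(K;t^{-1})$ and the $j=1$ terms produce $\varepsilon_1\sum_{i\in I_a}\varepsilon_i(t^{\alpha_i\cdot\beta_i}-1)=\varepsilon_1 W_a(K;t)$; then the four displayed cases are read off. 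I would also double-check the claim that a Reidemeister~I kink contributes a null-homologous $\alpha_1$ by looking at Figure~\ref{R1} (both positive and negative kinks, and both over- and under-first strands, so all crossing signs and types of $c_1$ are covered), since that geometric input is what makes the whole computation collapse. Everything else is routine term-matching, essentially identical to the virtual-knot case treated in~\cite{HNNS1}.
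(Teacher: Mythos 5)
Your overall strategy --- $\alpha_1=0$ in homology, $\beta_1=\gamma_D$, and term-by-term bookkeeping, checking all four kinks of Figure~\ref{R1} --- is exactly the paper's, and your parts (i) and (ii) are correct. But in part (iii) you make a sign error at precisely the step you flagged as the one needing care. You assert $\beta_1\cdot\beta_j=\gamma_D\cdot\beta_j=-\beta_j\cdot\gamma_D=-\alpha_j\cdot\beta_j$. The last equality is false: $\beta_j\cdot\gamma_D=\beta_j\cdot(\alpha_j+\beta_j)=\beta_j\cdot\alpha_j=-\alpha_j\cdot\beta_j$, so $-\beta_j\cdot\gamma_D=+\alpha_j\cdot\beta_j$; more directly, $\gamma_D\cdot\beta_j=(\alpha_j+\beta_j)\cdot\beta_j=\alpha_j\cdot\beta_j$. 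Consequently the $i=1$ terms of $h_{ab}$ contribute $\varepsilon_1\sum_{j\in I_b}\varepsilon_j(t^{\alpha_j\cdot\beta_j}-1)=\varepsilon_1 W_b(K;t)$, not $\varepsilon_1 W_b(K;t^{-1})$, and the $j=1$ terms contribute $\varepsilon_1\sum_{i\in I_a}\varepsilon_i(t^{-\alpha_i\cdot\beta_i}-1)=\varepsilon_1 W_a(K;t^{-1})$, not $\varepsilon_1 W_a(K;t)$. As written, your conclusions in the two off-diagonal cases of (iii) are exactly swapped (with $t\leftrightarrow t^{-1}$) relative to the statement you are proving; the diagonal case $a=b$, $1\in I_a=I_b$ happens to come out the same because there the two contributions are added, which is probably why the slip went unnoticed.

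This is not cosmetic: the correction terms in $H_{ab}(D;t)=h_{ab}(D;t)-\omega_a W_b(K;t)-\omega_b W_a(K;t^{-1})$ are calibrated to the correct version of (iii). With your version, removing a kink of type $a$ (for $a\ne b$) would change $H_{ab}$ by $\varepsilon_1\bigl(W_b(K;t^{-1})-W_b(K;t)\bigr)$, which is nonzero in general, so the invariance argument in Theorem~\ref{thm22} would break down. Once the sign is corrected, everything else in your write-up (the null-homologous $\alpha_1$, the vanishing $(1,1)$ term $\varepsilon_1^2(t^{\beta_1\cdot\beta_1}-1)=0$ when $a=b$, and $\alpha_i\cdot\beta_1=\alpha_i\cdot\beta_i$ for part (ii)) is sound and the proof coincides with the paper's.
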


\begin{figure}[htbp]
  \centering
    \begin{overpic}[]{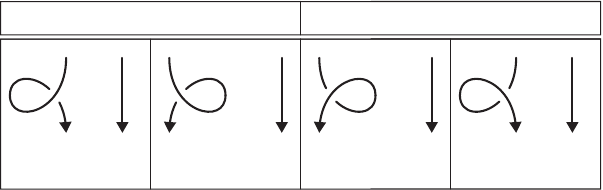}
      \put(52,80){$1\in I_{0}(D)$}
      \put(20,55.5){$c_{1}$}
      \put(41,43){$\to$}
      \put(27,17){$D$}
      \put(54,17){$D'$}
      \put(24,5){$\e_{1}=1$}
      \put(85.5,55.5){$c_{1}$}
      \put(118,43){$\to$}
      \put(76.5,16){$D$}
      \put(130.5,16){$D'$}
      \put(91,5){$\e_{1}=-1$}
      \put(196,80){$1\in I_{1}(D)$}
      \put(157.5,55.5){$c_{1}$}
      \put(189.5,43){$\to$}
      \put(149,16){$D$}
      \put(203,16){$D'$}
      \put(168,5){$\e_{1}=1$}
      \put(236.5,55.5){$c_{1}$}
      \put(257.3,43){$\to$}
      \put(244,16){$D$}
      \put(271,16){$D'$}
      \put(236,5){$\e_{1}=-1$}
    \end{overpic}
  \caption{A Reidemeister move I}
  \label{R1}
\end{figure}

\begin{proof}
The proof is essentially the same as that of the intersection polynomials for 
a virtual knot, as presented in \cite{HNNS1}. 
We prove (iii) only; (i) and (ii) can be proved similarly. 
Since we have $\alpha_1=0$ and $\beta_1=\gamma_D$, 
it holds that 
\[\beta_1\cdot\beta_j=\gamma_D\cdot\beta_j
=(\alpha_j+\beta_j)\cdot\beta_j=\alpha_j\cdot\beta_j.\]

First we consider the case $a=b$. 
If $1\in I_a(D)=I_b(D)$, 
then we have 
\begin{align*}
& h_{aa}(D;t)-h_{aa}(D';t) \\
&= 
\varepsilon_1^2(t^{\beta_1\cdot\beta_1}-1)+
\sum_{j\in I_a(D)\setminus\{1\}}
\varepsilon_1\varepsilon_j
(t^{\beta_1\cdot\beta_j}-1)
+
\sum_{i\in I_a(D)\setminus\{1\}}
\varepsilon_i\varepsilon_1
(t^{\beta_i\cdot\beta_1}-1)\\
&= 
\varepsilon_1\sum_{j\in I_a(D')}
\varepsilon_j
(t^{\alpha_j\cdot\beta_j}-1)
+
\varepsilon_1\sum_{i\in I_a(D')}
\varepsilon_i
(t^{-\alpha_i\cdot\beta_i}-1)\\
&=
\varepsilon_1 W_a(K;t)+
\varepsilon_1 W_a(K;t^{-1}).
\end{align*}
If $1\not\in I_a(D)=I_b(D)$, 
then we clearly have 
$h_{aa}(D;t)=h_{aa}(D';t)$.

Next we consider the case $a\ne b$. 
If $1\in I_a(D)$, 
then we have 
\[h_{ab}(D;t)-h_{ab}(D';t)
=\sum_{j\in I_b(D)}
\varepsilon_1\varepsilon_j(t^{\beta_1\cdot\beta_j}-1)=
\varepsilon_1 W_b(K;t).\]
If $1\in I_b(D)$, 
then 
\[h_{ab}(D;t)-h_{ab}(D';t)
=\sum_{i\in I_a(D)}
\varepsilon_i\varepsilon_1(t^{\beta_i\cdot\beta_1}-1)=
\varepsilon_1 W_a(K;t^{-1}).\]
\end{proof}

\begin{proof}[Proof of {\rm Theorem~\ref{thm22}}.]
We prove only that $H_{ab}(D;t)=H_{ab}(D';t)$ holds 
for any two surface realizations 
$(\Sigma_{g},D)$ and $(\Sigma_{g'},D')$ of $K$. 
The cases of $F_{ab}$ and $G_{ab}$ can be proved similarly. 
Since the intersection numbers $\beta_{i}\cdot\beta_{j}$ are preserved 
by a (de)stabilization of a surface, 
it suffices to consider the case $g=g'$. 
 
Assume that $(\Sigma_g,D')$ is obtained from $(\Sigma_g,D)$ 
by a Reidemeister move II or III. 
Since we have $\omega_a(D)=\omega_a(D')$ and $\omega_b(D)=\omega_b(D')$, 
it follows from Theorem~\ref{thm21} and Lemma~\ref{lem31} that 
\begin{align*}
H_{ab}(D;t) &=h_{ab}(D;t)-\omega_a(D) W_b(K;t)
-\omega_b(D) W_a(K;t^{-1})\\
&=h_{ab}(D';t)-\omega_a(D') W_b(K;t)
-\omega_b(D') W_a(K;t^{-1})
=H_{ab}(D';t).
\end{align*} 

Assume that $(\Sigma_g,D')$ is obtained from $(\Sigma_g,D)$ by a Reidemeister move I 
removing a real crossing $c_1$ of $D$ 
as shown in Figure~\ref{R1}. 
If $a=b$ and $1\in I_a(D)=I_b(D)$, 
then we have 
$\omega_a(D)=\omega_a(D')+\varepsilon_1$ 
and hence
\begin{align*}
H_{aa}(D;t) &=h_{aa}(D;t)-\omega_a(D) W_a(K;t)
-\omega_a(D) W_a(K;t^{-1})\\
&=
h_{aa}(D';t)+\varepsilon_1(W_a(K;t)+W_a(K;t^{-1}))\\
& \quad -(\omega_a(D')+\varepsilon_1)W_a(K;t)
-(\omega_a(D')+\varepsilon_1)W_a(K;t^{-1})\\
&=h_{aa}(D';t)-\omega_a(D') W_a(K;t)
-\omega_a(D') W_a(K;t^{-1})\\
&=H_{aa}(D';t) 
\end{align*}
by Theorem~\ref{thm21} and Lemma~\ref{lem32}(iii). 
The other cases can be proved similarly. 
\end{proof}

\begin{definition}\label{def33}
A long virtual knot diagram $D$ is called 
{\it untwisted} if it satisfies 
$\omega_0(D)=\omega_1(D)=0$. 
\end{definition}

\begin{remark}\label{rem34}
By applying Reidemeister moves of type I appropriately, 
we see that 
any long virtual knot $K$ has an 
untwisted diagram. 
For such a diagram $D$, we have 
\[G_{ab}(K;t)=g_{ab}(D;t)\mbox{ and }H_{ab}(K;t)=h_{ab}(D;t) 
\quad (a,b\in\{0,1\}). 
\] 
\end{remark}


\section{Examples}\label{sec4} 

For an integer $n\geq 2$, 
let $\Gamma(n)$ be a {\it flat} long virtual knot diagram 
with $n+1$ crossings $c_1,\dots,c_{n+1}$ 
as shown in Figure~\ref{fig:ex40}(a), 
where the real crossings have no crossing information.

\begin{figure}[htbp]
  \centering
    \begin{overpic}[]{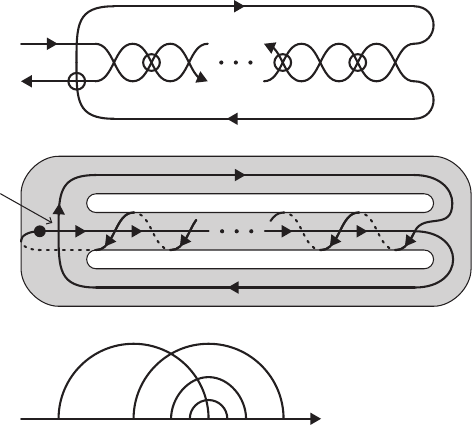}
      \put(-20,171){(a)}
      \put(50,188){$c_{n}$}
      \put(80,188){$c_{n-1}$}
      \put(150,188){$c_{2}$}
      \put(187,188){$c_{1}$}
      \put(14,191){$c_{n+1}$}
       \put(-20,90){(b)}
      \put(189,96.5){$c_{1}$}
      \put(153,96.5){$c_{2}$}
      \put(72,97){$c_{n-1}$}
      \put(45,97){$c_{n}$}
      \put(-12,117){$c_{n+1}$}
      \put(-20,15){(c)}
      \put(15,30){$c_{n+1}$}
      \put(127,32){$c_{n}$}
      \put(113,20){$c_{2}$}
      \put(105.5,12){$c_{1}$}
      \put(24,-7){$-$}
      \put(60.5,-7){$+$}
      \put(68.5,9){$\dots$}
      \put(79,-7){$+$}
      \put(87.75,-7){$+$}
      \put(96.75,-7){$+$}
      \put(105.75,-7){$-$}
      \put(114.75,-7){$-$}
      \put(133,-7){$-$}
      \put(122,9){$\dots$}
     \end{overpic}
  \vspace{1em}
  \caption{The flat long virtual knot diagram $\Gamma(n)$}
  \label{fig:ex40}
\end{figure}

Let $D$ be a long virtual knot diagram 
whose underlying curve is $\Gamma(n)$. 
We remark that the intersection numbers are 
independent of crossing information 
at $c_1,\dots,c_{n+1}$. 
Since $\Gamma(n)$ is realized  
by an immersed circle on $\Sigma_2$ 
as shown in Figure~\ref{fig:ex40}(b), 
we see that 
\[
\alpha_i\cdot\gamma_D=
\begin{cases}
1 & \mbox{for }1\leq i\leq n, \\
n & \mbox{for }i=n+1,
\end{cases}
\]
and 
\[
\alpha_i\cdot\alpha_j=
\begin{cases} 
0 & \mbox{for } 1\leq i,j\leq n \mbox{ and }
i=j=n+1,\\
j-1 & \mbox{for } i=n+1 \mbox{ and }
1\leq j\leq n, \\
-(i-1) & \mbox{for }1\leq i\leq n \mbox{ and }
j=n+1. 
\end{cases}
\]
This is also obtained from the Gauss diagram 
of $\Gamma(n)$ as shown in Figure~\ref{fig:ex40}(c). 
Refer to \cite[Section 3]{HNNS1} for the calculation 
by using a Gauss diagram. 
By the equations 

\begin{align*}
\alpha_i\cdot\beta_j
&=\alpha_i\cdot(\gamma_D-\alpha_j)
=\alpha_i\cdot\gamma_D-
\alpha_i\cdot\alpha_j\mbox{ and}\\
\beta_i\cdot\beta_j
&=
(\gamma_D-\alpha_i)\cdot(\gamma_D-\alpha_j)
=-\alpha_i\cdot\gamma_D+\alpha_j\cdot\gamma_D
+\alpha_i\cdot\alpha_j,
\end{align*}
we have 
\[
\alpha_i\cdot\beta_j=
\begin{cases} 
1 & \mbox{for } 1\leq i,j\leq n,\\
(n+1)-j & \mbox{for } i=n+1 \mbox{ and }
1\leq j\leq n, \\
i & \mbox{for }1\leq i\leq n \mbox{ and }
j=n+1, \\
n & \mbox{for }i=j=n+1,
\end{cases}
\]
and 
\[
\beta_i\cdot\beta_j=
\begin{cases} 
0 & \mbox{for } 1\leq i,j\leq n\mbox{ and }i=j=n+1,\\
-n+j & \mbox{for } i=n+1 \mbox{ and }
1\leq j\leq n, \\
n-i & \mbox{for }1\leq i\leq n \mbox{ and }
j=n+1.
\end{cases}
\]
These calculations are summarized 
in Table~\ref{intersection}. 

\begin{table}[h]
\caption{The intersection numbers for $\Gamma(n)$}
\label{intersection}
\begin{center}
\begin{tabular}{|c||ccc|c|}
\hline
$\alpha_i\cdot\alpha_j$ & 
$1$ & $\cdots$ & $n$ & $n+1$ \\
\hline\hline $1$ & $0$ & $\cdots$ & $0$ & $0$\\
$\vdots$ & $\vdots$ & $\ddots$ & $\vdots$ & $\vdots$ \\
$n$ & $0$ & $\cdots$ & $0$ & $-n+1$ \\
\hline $n+1$ & $0$ & $\cdots$ & $n-1$ & $0$ \\ 
\hline
\end{tabular}
\hspace{8mm}
\begin{tabular}{|c||ccc|c|}
\hline
$\alpha_i\cdot\beta_j$ & 
$1$ & $\cdots$ & $n$ & $n+1$ \\
\hline\hline $1$ & $1$ & $\cdots$ & $1$ & $1$\\
$\vdots$ & $\vdots$ & $\ddots$ & $\vdots$ & $\vdots$ \\
$n$ & $1$ & $\cdots$ & $1$ & $n$ \\
\hline $n+1$ & $n$ & $\cdots$ & $1$ & $n$ \\
\hline
\end{tabular}\\

\vspace{5mm}
\begin{tabular}{|c||ccc|c|}
\hline
$\beta_i\cdot\beta_j$ & 
$1$ & $\cdots$ & $n$ & $n+1$ \\
\hline\hline $1$ & $0$ & $\cdots$ & $0$ & $n-1$\\
$\vdots$ & $\vdots$ & $\ddots$ & $\vdots$ & $\vdots$ \\
$n$ & $0$ & $\cdots$ & $0$ & $0$ \\
\hline $n+1$ & $-n+1$ & $\cdots$ & $0$ & $0$\\ \hline
\end{tabular}
\end{center}
\end{table}

\begin{example}\label{ex41}
Let $D(n)$ be the long virtual knot diagram 
obtained from $\Gamma(n)$ by specifying 
$\e_1,\dots,\e_n=-1$ and $\e_{n+1}=1$. 
Equivalently, every real crossing $c_i$ 
is of type $0$. 
Let $K(n)$ be the long virtual knot 
presented by $D(n)$. 
Since it holds that 
\[I_0=\{1,\dots,n+1\}, \ I_1=\emptyset, \ 
\omega_0=-n+1, \mbox{ and }
\omega_1=0,\]
we have 
\begin{align*}
W_0(K(n);t)
&=\sum_{i=1}^{n+1}
\e_i(t^{\alpha_i\cdot\beta_i}-1)
=t^n-nt+n-1, \\
F_{00}(K(n);t)
&=
\sum_{1\leq i,j\leq n+1}
\e_i\e_j(t^{\alpha_i\cdot\alpha_j}-1)\\
&=
-(t^{n-1}+t^{-n+1})-\dots-(t+t^{-1})+2(n-1),\\
G_{00}(K(n);t)
&=
\sum_{1\leq i,j\leq n+1}
\e_i\e_j(t^{\alpha_i\cdot\beta_j}-1)
-\omega_0 W_0(K(n);t)\\
&=(n-2)t^{n}-2(t^{n-1}+\dots+t)+nt\\
&=
\begin{cases} 
0 & \mbox{for }n=2, \\
(n-2)t^{n}-2(t^{n-1}+\dots+t^2)+(n-2)t & \mbox{for }n\geq 3, 
\end{cases}\\
H_{00}(K(n);t)
&=
\sum_{1\leq i,j\leq n+1}
\e_i\e_j(t^{\beta_i\cdot\beta_j}-1)
-\omega_0 W_0(K(n);t)-\omega_0 W_0(K(n);t^{-1})\\
&=
(n-1)(t^{n}+t^{-n})
-(t^{n-1}+t^{-n+1})-\dots-(t+t^{-1})\\
&\quad -n(n-1)(t+t^{-1})+2n(n-1)\\
&=
\begin{cases}
(t^2+t^{-2})-3(t+t^{-1})+4 & \mbox{for }n=2, \\
(n-1)(t^{n}+t^{-n})-(t^{n-1}+t^{-n+1})-\dots-(t^2+t^{-2}) & \\
-(n^{2}-n+1)(t+t^{-1})+2n(n-1) & \mbox{for }n\geq 3, 
\end{cases}
\end{align*}
and the other polynomials are equal to zero. 
\end{example}

\begin{example}\label{ex42}
Let $D'(n)$ be the long virtual knot diagram 
obtained from $\Gamma(n)$ by specifying 
$\e_1,\dots,\e_{n+1}=1$. 
Equivalently, $c_1,\dots,c_{n}$ are of type $1$ 
and $c_{n+1}$ is of type $0$. 
Let $K'(n)$ be the long virtual knot 
presented by $D'(n)$. 
Since it holds that 
\[I_0=\{n+1\}, \ I_1=\{1,\dots,n\}, \ 
\omega_0=1, \mbox{ and }
\omega_1=n,\]
we have 
\begin{align*}
W_0(K'(n);t)&=
\e_{n+1}(t^{\alpha_{n+1}\cdot\beta_{n+1}}-1)
=t^{n}-1,\\
W_1(K'(n);t)
&=\sum_{i=1}^{n}
\e_i(t^{\alpha_i\cdot\beta_i}-1)
=n(t-1), \\
F_{00}(K'(n);t)
&=\e_{n+1}^2(t^{\alpha_{n+1}\cdot\alpha_{n+1}}-1)=0, \\
F_{01}(K'(n);t)
&=
\sum_{j=1}^{n}
\e_{n+1}\e_j(t^{\alpha_{n+1}\cdot\alpha_j}-1)
=t^{n-1}+\dots+t-n+1,\\
F_{10}(K'(n);t)
&=
F_{01}(K'(n);t^{-1})
=t^{-n+1}+\dots+t^{-1}-n+1,\\
F_{11}(K'(n);t)
&=
\sum_{1\leq i,j\leq n}
\e_i\e_j(t^{\alpha_i\cdot\alpha_j}-1)=0, \\
G_{00}(K'(n);t)
&=
\e_{n+1}^2(t^{\alpha_{n+1}\cdot\beta_{n+1}}-1)
-\omega_0 W_0(K'(n);t)=0, \\
G_{01}(K'(n);t)
&=
\sum_{j=1}^{n}
\e_{n+1}\e_j(t^{\alpha_{n+1}\cdot\beta_j}-1)
-\omega_1 W_0(K'(n);t)\\
&=-(n-1)t^{n}+t^{n-1}+\dots+t,\\
G_{10}(K'(n);t)
&=
\sum_{i=1}^{n}
\e_i\e_{n+1}(t^{\alpha_i\cdot\beta_{n+1}}-1)
-\omega_0 W_1(K'(n);t)
=t^{n}+\dots+t^2-(n-1)t,\\
G_{11}(K'(n);t)
&=
\sum_{1\leq i,j\leq n}
\e_i\e_j(t^{\alpha_i\cdot\beta_j}-1)
-\omega_1 W_1(K'(n);t)=0, \\
H_{00}(K'(n);t)
&=
\e_{n+1}^2(t^{\beta_{n+1}\cdot\beta_{n+1}}-1)
-\omega_0 W_0(K'(n);t)-\omega_0 W_0(K'(n);t^{-1})\\
&=-t^{n}+2-t^{-n}, \\
H_{01}(K'(n);t)
&=
\sum_{j=1}^{n}
\e_{n+1}\e_j(t^{\beta_{n+1}\cdot\beta_j}-1)
-\omega_0 W_1(K'(n);t)-\omega_1 W_0(K'(n);t^{-1})\\
&=
-nt^{-n}+t^{-n+1}+\dots+t^{-1}+(n+1)-nt, \\
H_{10}(K'(n);t)
&=H_{01}(K'(n);t^{-1})=
-nt^{n}+t^{n-1}+\dots+t+(n+1)-nt^{-1},\\
H_{11}(K'(n);t)
&=
\sum_{1\leq i,j\leq n}
\e_i\e_j(t^{\beta_i\cdot\beta_j}-1)
-\omega_1 W_1(K(n);t)-\omega_1 W_1(K(n);t^{-1})\\
&=
-n^{2}(t-2+t^{-1}). 
\end{align*}
\end{example}


\section{Symmetries}\label{sec5} 

In what follows, 
when no ambiguity arises, 
we identify a long virtual knot diagram $D$ 
with its surface realization $(\Sigma_g,D)$. 
We construct three diagrams $D^\#$, $-D$, and $D^*$ 
from $D$ as follows: 
\begin{itemize}
\item
$D^\#$ is obtained from $D$ by switching over/under information
at all real crossings of $D$. 
\item
$-D$ is obtained from $D$ by reversing the orientation of $D$. 
\item
$D^*$ is obtained from $D$ by an 
orientation-reversing homeomorphism of 
${\R}^2$ (or $\Sigma_g$). 
\end{itemize}
Let $K$ be the long virtual knot presented by $D$. 
We denote by $K^\#$, $-K$, and $K^*$ the long virtual knots
presented by  $D^\#$, $-D$, and $D^*$, respectively.

In this section, 
we prove the following two theorems, 
where $a'=1-a$ and $b'=1-b$ for $a,b\in\{0,1\}$. 

\begin{theorem}\label{thm51}
For any $a\in\{0,1\}$, we have the following.
\begin{itemize}
\item[(i)] $W_a(K^\#;t)=-W_{a'}(K;t)$. 
\item[(ii)] $W_a({-K};t)=W_{a'}(K;t)$. 
\item[(iii)] $W_a(K^*;t)=-W_a(K;t^{-1})$. 
\end{itemize}
\end{theorem}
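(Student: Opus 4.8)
The plan is to track how the three operations $D\mapsto D^\#$, $D\mapsto -D$, and $D\mapsto D^*$ affect the four ingredients that enter the definition of $W_a$: the type $a$ of a crossing, its sign $\varepsilon_i$, the cycle $\gamma_D$ on the surface, and consequently the intersection number $\alpha_i\cdot\beta_i$. Since $W_a(D;t)=\sum_{i\in I_a}\varepsilon_i(t^{\alpha_i\cdot\beta_i}-1)$, knowing the effect on each ingredient will immediately give the claimed transformation rules. I would treat the three parts in turn, working crossing-by-crossing; by Theorem~\ref{thm21} it suffices to verify the identities for a single fixed diagram $D$ and its image.

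For (i), switching all over/under information at a real crossing $c_i$ interchanges which strand is over and which is under, so the order in which one ``goes over'' versus ``goes under'' along $D$ is reversed: a type~$0$ crossing becomes type~$1$ and vice versa, hence $I_a(D^\#)=I_{a'}(D)$. The sign $\varepsilon_i$ is unchanged under a crossing switch (both strands keep their orientation, and the sign depends only on the orientations of the two branches, not on which is over). The smoothing that produces $\alpha_i,\beta_i$ does not see the over/under data, so $\gamma_D$ and the pair $(\alpha_i,\beta_i)$ — hence $\alpha_i\cdot\beta_i$ — are unchanged. Therefore $W_a(D^\#;t)=\sum_{i\in I_{a'}(D)}\varepsilon_i(t^{\alpha_i\cdot\beta_i}-1)$; but this is exactly $-W_{a'}$ only if there is a compensating sign, so I need to be careful here. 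In fact reversing over/under does change the sign $\varepsilon_i$: with the standard convention the sign of a crossing is determined by the frame (over-strand, under-strand), and swapping them negates it. So $\varepsilon_i(D^\#)=-\varepsilon_i(D)$ while $\alpha_i\cdot\beta_i$ is unchanged and $I_a(D^\#)=I_{a'}(D)$, giving $W_a(K^\#;t)=-W_{a'}(K;t)$. I expect this sign bookkeeping — pinning down precisely the crossing-sign convention in use and checking it against the type convention — to be the main subtlety of the whole proof.

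For (ii), reversing the orientation of $D$ means traversing from $+\infty$ to $-\infty$; equivalently, relabel the basepoint end. Along the reversed path, the order over/under at each crossing is reversed, so again type~$0\leftrightarrow$ type~$1$ and $I_a(-D)=I_{a'}(D)$. Reversing the orientation of \emph{both} branches at a crossing leaves the sign $\varepsilon_i$ unchanged. As for the cycles, reversing orientation negates $\gamma_D$ and negates each of $\alpha_i$ and $\beta_i$, but then $\alpha_i\cdot\beta_i=(-\alpha_i)\cdot(-\beta_i)$ is unchanged; one should also check that the roles of $\alpha_i$ (not containing the basepoint) and $\beta_i$ (containing it) are not swapped — they are not, since the basepoint is a point on the curve, not an endpoint of one of the loops. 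Hence $W_a(-K;t)=\sum_{i\in I_{a'}(D)}\varepsilon_i(t^{\alpha_i\cdot\beta_i}-1)=W_{a'}(K;t)$.

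For (iii), an orientation-reversing homeomorphism $\varphi$ of $\Sigma_g$ fixes the combinatorics of the underlying curve and the over/under data, so it preserves the type of each crossing: $I_a(D^*)=I_a(D)$. It negates the sign of every real crossing, $\varepsilon_i(D^*)=-\varepsilon_i(D)$, because it reverses the local orientation of the plane. And because $\varphi$ reverses orientation, it reverses all intersection numbers: $\varphi(\alpha_i)\cdot\varphi(\beta_i)=-\alpha_i\cdot\beta_i$. Substituting, $W_a(D^*;t)=\sum_{i\in I_a(D)}(-\varepsilon_i)(t^{-\alpha_i\cdot\beta_i}-1)$. Factoring, $-\varepsilon_i(t^{-m}-1)=\varepsilon_i(t^{-m})(t^{m}-1)\cdot(-1)$ — rather, write $t^{-m}-1=-t^{-m}(t^m-1)$, so $-\varepsilon_i(t^{-m}-1)=\varepsilon_i t^{-m}(t^m-1)$, which is not obviously $-\varepsilon_i$ times a term of $W_a(K;t^{-1})$. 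Cleaner: $W_a(K;t^{-1})=\sum_{i\in I_a}\varepsilon_i(t^{-\alpha_i\cdot\beta_i}-1)$, so $W_a(D^*;t)=\sum_{i\in I_a}(-\varepsilon_i)(t^{-\alpha_i\cdot\beta_i}-1)=-W_a(K;t^{-1})$, as claimed. I would present (iii) in this last form and double-check the sign-negation claim for an orientation-reversing homeomorphism against the same crossing-sign convention used in (i), since consistency between those two sign statements is where an error would most likely hide.
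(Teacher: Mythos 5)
Your proof is correct and follows essentially the same route as the paper: for each of the three operations it records the effect on the index sets $I_a$, the signs $\varepsilon_i$, and the intersection numbers $\alpha_i\cdot\beta_i$, and substitutes into the definition of $W_a$ (the paper records exactly $I_a(D^\#)=I_{a'}(D)$, $\varepsilon_i^\#=-\varepsilon_i$, $\alpha_i^\#=\alpha_i$, $\beta_i^\#=\beta_i$, and the analogous data for $-D$ and $D^*$). Your initial hesitation in (i) about whether a crossing switch negates the sign is resolved correctly in your own text, and the digression in (iii) ends at the same clean computation the paper gives, so no changes are needed beyond tidying those two passages.
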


\begin{theorem}\label{thm52}
For any $X\in\{F,G,H\}$ and 
$a,b\in\{0,1\}$, we have the following.
\begin{itemize}
\item[(i)] 
$X_{ab}({K^\#}; t)=X_{ab}(-K;t)=X_{a'b'}(K;t)$. 
\item[(ii)] $X_{ab}({K^*};t)=X_{ab}(K;t^{-1})$.
\end{itemize}
\end{theorem}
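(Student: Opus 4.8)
\textbf{Proof proposal for Theorem~\ref{thm52}.}

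The plan is to reduce everything to the level of the defining data---the cycles $\alpha_i,\beta_i$ on $\Sigma_g$, the signs $\varepsilon_i$, and the type-sets $I_a$---and to track how each of the three operations $D\mapsto D^\#$, $D\mapsto -D$, $D\mapsto D^*$ acts on these data. Since Theorem~\ref{thm52} is stated for a knot $K$, it suffices to verify the formulas for a single surface realization $D$; invariance is already guaranteed by Theorem~\ref{thm22}. For part~(i), the key observation is that each of the operations $\#$ and $-$ swaps the roles of over- and under-crossing along $D$, hence sends a real crossing of type $a$ to one of type $a'=1-a$; concretely $I_a(D^\#)=I_a(-D)=I_{a'}(D)$. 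One then checks that the four cycles attached to a crossing are essentially unchanged: passing to $D^\#$ does not move any strand, so $\alpha_i,\beta_i$ are literally the same cycles, while $\varepsilon_i$ is also unchanged (switching over/under preserves the sign). For $-D$, reversing the orientation of the whole diagram negates every homology cycle, so $\alpha_i\mapsto-\alpha_i$, $\beta_i\mapsto-\beta_i$; but then $\alpha_i\cdot\alpha_j$, $\alpha_i\cdot\beta_j$, $\beta_i\cdot\beta_j$ are all unchanged (two sign flips cancel), and $\varepsilon_i$ is also unchanged since reversing the orientation of both strands at a crossing preserves its sign. In either case the lowercase polynomials satisfy $f_{ab}(D^\#)=f_{ab}(-D)=f_{a'b'}(D)$, and similarly for $g_{ab},h_{ab}$; combined with $\omega_a(D^\#)=\omega_a(-D)=\omega_{a'}(D)$ and Theorem~\ref{thm51}(i)--(ii) (which handle the correction terms $\omega_b W_a$ etc.), the claimed identity $X_{ab}(K^\#;t)=X_{ab}(-K;t)=X_{a'b'}(K;t)$ follows by substituting into the definitions in Theorem~\ref{thm22}.

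For part~(ii), the point is that an orientation-reversing homeomorphism $\varphi$ of $\Sigma_g$ preserves the orientation of $D$ but reverses the orientation of the surface, hence reverses all intersection numbers: $\varphi(\gamma)\cdot\varphi(\gamma')=-\gamma\cdot\gamma'$ for any cycles $\gamma,\gamma'$. In particular $\alpha_i\cdot\alpha_j$, $\alpha_i\cdot\beta_j$, $\beta_i\cdot\beta_j$ all change sign. Since $\varphi$ does not change which crossing is encountered over-first, the type-sets are preserved, $I_a(D^*)=I_a(D)$, and the crossing signs also flip, $\varepsilon(c_i^*)=-\varepsilon_i$, so that each product $\varepsilon_i\varepsilon_j$ is unchanged. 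Therefore in each monomial of $f_{ab},g_{ab},h_{ab}$ the exponent is negated while the coefficient is preserved, giving $f_{ab}(D^*;t)=f_{ab}(D;t^{-1})$ and likewise for $g_{ab},h_{ab}$. Together with $\omega_a(D^*)=\omega_a(D)$ and Theorem~\ref{thm51}(iii), substitution into the definitions of $G_{ab}$ and $H_{ab}$ yields $X_{ab}(K^*;t)=X_{ab}(K;t^{-1})$; for $H$ one must note that the two correction terms $\omega_a W_b(K;t)$ and $\omega_b W_a(K;t^{-1})$ get swapped under $t\leftrightarrow t^{-1}$, which is exactly what is needed for the identity to close up.

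The main obstacle I anticipate is not conceptual but bookkeeping: one must be careful to verify the sign behavior of $\varepsilon_i$ under each operation (it is unchanged under $\#$ and under $-$, but reversed under $*$), and to make sure the correction terms in the definitions of $G_{ab}$ and $H_{ab}$ transform consistently with the $f,g,h$ parts---in particular for $H_{ab}$, where one correction term is evaluated at $t$ and the other at $t^{-1}$, so the two must trade places under the substitution in part~(ii). A secondary subtlety worth spelling out is the interaction between the basepoint and the operations: $D^\#$, $-D$, $D^*$ all keep the same underlying immersed curve and the same basepoint up to the relevant identification, so the distinction between $\alpha_i$ (avoiding the basepoint) and $\beta_i$ (containing it) is respected, except that reversing the orientation of $D$ interchanges the two ends $\pm\infty$, which is harmless since the basepoint corresponds to $\{+\infty,-\infty\}$ as an unordered pair. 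Once these points are pinned down, the proof is a direct substitution into Theorem~\ref{thm22}.
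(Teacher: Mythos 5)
Your overall strategy (track how $\#$, $-$, $*$ act on $I_a$, $\varepsilon_i$, $\alpha_i$, $\beta_i$, then substitute into the definitions of Theorem~\ref{thm22} using Theorem~\ref{thm51} for the correction terms) is sound and close in spirit to the paper's, but as written the sign bookkeeping — exactly the point you flagged as the main danger — is wrong in two places, and with your stated values the substitution does not close up for $G_{ab}$ and $H_{ab}$. First, switching over/under information at a crossing \emph{reverses} its sign: $\varepsilon_i^\#=-\varepsilon_i$ (this is used explicitly in the paper's proof of Theorem~\ref{thm51}(i), which is why $W_a(K^\#;t)=-W_{a'}(K;t)$ rather than $+W_{a'}(K;t)$). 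Hence $\omega_a(D^\#)=-\omega_{a'}(D)$, not $\omega_{a'}(D)$ as you claim. Second, in part~(ii) you correctly record $\varepsilon(c_i^*)=-\varepsilon_i$ and $I_a(D^*)=I_a(D)$, but then assert $\omega_a(D^*)=\omega_a(D)$; these are incompatible, and in fact $\omega_a(D^*)=-\omega_a(D)$. These errors are harmless for $F_{ab}$ and for the lowercase polynomials, since only the products $\varepsilon_i\varepsilon_j$ enter there, but they are fatal for the correction terms: for instance, with your values $\omega_b(D^\#)=\omega_{b'}(D)$ and $W_a(K^\#;t)=-W_{a'}(K;t)$ one gets $G_{ab}(K^\#;t)=g_{a'b'}(D;t)+\omega_{b'}(D)W_{a'}(K;t)\neq G_{a'b'}(K;t)$ in general, and similarly for $H$ and for the $*$ case. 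The identities do hold, but only because the two sign flips ($\omega\mapsto-\omega$ and $W\mapsto-W$) cancel; your write-up has only one of them, so the argument as stated fails. The repair is immediate: correct the two sign claims and redo the substitutions.

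It is also worth noting that the paper avoids this bookkeeping altogether: it takes the diagram $D$ untwisted (Remark~\ref{rem34}), so that $\omega_0(D)=\omega_1(D)=0$ for $D$ and for $D^\#$, $-D$, $D^*$ alike, whence $G_{ab}(K;t)=g_{ab}(D;t)$ and $H_{ab}(K;t)=h_{ab}(D;t)$ and only the lowercase computation (where the sign issues are invisible) is needed. Adopting that reduction would make your proof both correct and shorter; alternatively, keep the general diagram but fix $\varepsilon_i^\#=-\varepsilon_i$, $\omega_a(D^\#)=-\omega_{a'}(D)$, $\omega_a(D^*)=-\omega_a(D)$, after which the correction terms (including the swap of $\omega_aW_b(K;t)$ and $\omega_bW_a(K;t^{-1})$ in $H_{ab}$ under $t\leftrightarrow t^{-1}$ that you correctly anticipated) do match.
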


For a real crossing $c_i$ of $D$, 
we denote by $c_i^\#$, $c_i^-$, and $c_i^*$ the corresponding real crossings 
of $D^\#$, $-D$, and $D^*$, respectively. 
The sign and the two cycles associated with each of these crossings 
are analogously indicated by 
appending the superscripts $\#$, $-$, and $*$, respectively.

\begin{proof}[Proof of {\rm Theorem~\ref{thm51}}]
(i) Since it holds that 
\[I_a(D^\#)=I_{a'}(D), \ \varepsilon_i^\#=-\varepsilon_i, \ 
\alpha_i^\#=\alpha_i, \mbox{ and } \beta_i^\#=\beta_i,\]
we have 
\[W_a(K^\#;t)=\sum_{i\in I_a(D^\#)}
\varepsilon_i^\#(t^{\alpha^\#_i\cdot\beta_i^\#}-1)
=\sum_{i\in I_{a'}(D)}(-\varepsilon_i)(t^{\alpha_i\cdot\beta_i}-1) 
=-W_{a'}(K;t).\]

(ii) Since it holds that 
\[I_a(-D)=I_{a'}(D), \ \varepsilon_i^-=\varepsilon_i, \ 
\alpha_i^-=-\alpha_i, \mbox{ and } \beta_i^-=-\beta_i,\]
we have 
\begin{align*}
W_a(-K;t)&=\sum_{i\in I_a(-D)}
\varepsilon_i^-(t^{\alpha^-_i\cdot\beta_i^-}-1)
=\sum_{i\in I_{a'}(D)}
\varepsilon_i(t^{(-\alpha_i)\cdot(-\beta_i)}-1) \\
&=\sum_{i\in I_{a'}(D)}
\varepsilon_i(t^{\alpha_i\cdot\beta_i}-1)=W_{a'}(K;t).
\end{align*} 

(iii) Since it holds that 
\[I_a(D^*)=I_a(D), \ 
\varepsilon_i^*=-\varepsilon_i, \mbox{ and }
\alpha_i^*\cdot\beta_i^*=-\alpha_i\cdot\beta_i,\]
we have 
\[W_a(K^*;t)
=\sum_{i\in I_a(D^*)}
\varepsilon_i^*(t^{\alpha^*_i\cdot\beta^*_i}-1) 
=\sum_{i\in I_a(D)}
(-\varepsilon_i)(t^{-\alpha_i\cdot\beta_i}-1) 
=-W_a(K;t^{-1}).\]
\end{proof}

\begin{proof}[Proof of {\rm Theorem~\ref{thm52}}]
We prove only the case $X=H$; 
the other cases can be proved similarly. 
We may assume that $D$ is untwisted, 
and hence so are $D^\#$, $-D$, and $D^*$. 
By Remark~\ref{rem34}, 
we have $H_{ab}(K;t)=h_{ab}(D;t)$. 

(i) Similarly to the proofs of Theorem~\ref{thm51}(i) and (ii), 
we have 
\begin{align*}
H_{ab}(K^\#;t)&=h_{ab}(D^\#;t)
=\sum_{\substack{i\in I_a(D^\#)\\ j\in I_b(D^\#)}}
\varepsilon^\#_i\varepsilon^\#_j(t^{\beta^\#_i\cdot\beta^\#_j}-1)\\
&=\sum_{\substack{i\in I_{a'}(D)\\ j\in I_{b'}(D)}}
(-\varepsilon_i)(-\varepsilon_j)(t^{\beta_i\cdot\beta_j}-1)
=h_{a'b'}(D;t)=H_{a'b'}(K;t) \mbox{ and}\\
H_{ab}(-K;t)&=h_{ab}(-D;t)=\sum_{\substack{i\in I_a(-D)\\ j\in I_b(-D)}}
\varepsilon^-_i\varepsilon^-_j(t^{\beta^-_i\cdot\beta^-_j}-1)\\
&=\sum_{\substack{i\in I_{a'}(D)\\ j\in I_{b'}(D)}}
\varepsilon_i \varepsilon_j(t^{(-\beta_i)\cdot(-\beta_j)}-1)
=h_{a'b'}(D;t)=H_{a'b'}(K;t). 
\end{align*}

(ii) Similarly to the proof of Theorem~\ref{thm51}(iii), 
we have
\begin{align*}
H_{ab}(K^*;t)&=h_{ab}(D^*;t)=\sum_{\substack{i\in I_a(D^*)\\ j\in I_b(D^*)}}
\varepsilon^*_i\varepsilon^*_j(t^{\beta^*_i\cdot\beta^*_j}-1)\\
&=\sum_{\substack{i\in I_a(D)\\ j\in I_b(D)}}
(-\varepsilon_i)(-\varepsilon_j)(t^{-\beta_i\cdot\beta_j}-1)
=h_{ab}(D;t^{-1})=H_{ab}(K;t^{-1}), 
\end{align*}
where we use the equation 
$\beta_i^*\cdot\beta_j^*=-\beta_i\cdot\beta_j$. 
\end{proof}

\begin{proposition}\label{prop53}
There are infinitely many long virtual knots $K$ such that 
\[\pm K, \ \pm K^{\#}, \ \pm K^{*}, \mbox{ and }\pm K^{\# *}\] 
are mutually distinct. 
\end{proposition}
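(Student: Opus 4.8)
The plan is to produce an explicit infinite family and then separate its members using the intersection polynomials together with the symmetry formulae of Theorems~\ref{thm51} and~\ref{thm52}. The natural candidates are the knots $K(n)$ from Example~\ref{ex41}, or a mild variant of them, since their polynomials are already computed and grow with $n$. First I would observe that Theorems~\ref{thm51} and~\ref{thm52} reduce the problem to a finite bookkeeping task: the orbit of $K$ under the group generated by $\#$, orientation reversal, and $*$ is governed, at the level of intersection polynomials, by the two operations $X_{ab}(K;t)\mapsto X_{a'b'}(K;t)$ (induced by $\#$ and by $-$, which act the same way) and $X_{ab}(K;t)\mapsto X_{ab}(K;t^{-1})$ (induced by $*$). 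So among the eight knots listed, $K$ and $K^{\#}$, and likewise $-K$ with all of these, may a priori share the same intersection polynomials; what I must arrange is that the four ``classes'' $K$, $K^{\#}$ (equivalently $-K$, $-K^{\#}$), $K^{*}$, $K^{\#*}$ are genuinely distinct as long virtual knots. Since $-(K^{\#})=(-K)^{\#}$ and so on, it suffices to exhibit a single $K=K(n)$ for which the four polynomial-tuples attached to $K,K^{\#},K^{*},K^{\#*}$ are pairwise different, and for which no further coincidence ($K\simeq -K$, etc.) collapses the list of eight.

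Next I would carry out that separation concretely. For $K=K(n)$ with $n\ge 3$, Example~\ref{ex41} gives a nonzero, non-reciprocal polynomial such as $G_{00}(K(n);t)=(n-2)t^{n}-2(t^{n-1}+\dots+t^{2})+(n-2)t$, while $F_{00},H_{00}$ are reciprocal and all $X_{ab}$ with $\{a,b\}\ne\{0\}$ vanish. Using Theorem~\ref{thm52}(i), the knot $K^{\#}$ has $G_{00}(K^{\#};t)=G_{11}(K;t)=0$ but $G_{11}(K^{\#};t)=G_{00}(K;t)\ne0$, so $K$ and $K^{\#}$ (hence $-K$, $-K^{\#}$) are distinguished from each other by which index pair carries the nonzero $G$-polynomial. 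Applying $t\mapsto t^{-1}$ via Theorem~\ref{thm52}(ii), $K^{*}$ has $G_{00}(K^{*};t)=G_{00}(K;t^{-1})$, which differs from $G_{00}(K;t)$ precisely because the latter is not reciprocal for $n\ge3$; and $K^{\#*}$ has its nonzero $G$-polynomial in the $(1,1)$-slot, equal to $G_{00}(K;t^{-1})$. Thus the four tuples are pairwise distinct, so the eight knots in the list are mutually distinct as long virtual knots. Finally, varying $n$ over $\{3,4,5,\dots\}$ gives infinitely many such $K$: the polynomials $G_{00}(K(n);t)$ have distinct top degrees $n$, so the knots $K(n)$ (and their whole eight-element orbits) are mutually non-equivalent, yielding infinitely many $K$ with the required property.

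The main obstacle I anticipate is not the polynomial computation but the verification that \emph{no} coincidence among the eight knots is missed — in particular ruling out equivalences like $K\simeq K^{*}$ or $K^{\#}\simeq K^{\#*}$, which the symmetry theorems alone do not forbid. These are handled by the non-reciprocity of $G_{00}(K(n);t)$ for $n\ge3$: since $X_{ab}(K^{*};t)=X_{ab}(K;t^{-1})$, any equivalence $K\simeq K^{*}$ would force $G_{00}(K;t)=G_{00}(K;t^{-1})$, contradicting non-reciprocity; the same argument, combined with the index-swap $ab\mapsto a'b'$, disposes of the remaining pairs. One should also note that the listed eight knots are exactly the orbit closure under the three symmetries (because $\#$ and $-$ induce the same map on these invariants, one does not automatically get that $K^{\#}$ and $-K$ coincide as knots, but that is irrelevant: the statement only asks that the eight displayed knots be mutually distinct, and distinct polynomial tuples already give that). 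I would therefore organize the write-up as: (1) recall the polynomials of $K(n)$; (2) tabulate the eight polynomial tuples via Theorem~\ref{thm52}; (3) check pairwise distinctness, invoking non-reciprocity for the $*$-pairs; (4) let $n\to\infty$ and read off distinct degrees.
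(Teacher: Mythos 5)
There is a genuine gap, and it sits exactly where you declared the issue ``irrelevant.'' Your separating invariants are the intersection polynomials alone, but by Theorem~\ref{thm52}(i) the operations $\#$ and $-$ induce \emph{the same} transformation $X_{ab}\mapsto X_{a'b'}$ on all twelve of them. Consequently $K$ and $-K^{\#}$ carry identical intersection-polynomial tuples, as do the pairs $(-K,K^{\#})$, $(K^{*},-K^{\#*})$, and $(-K^{*},K^{\#*})$. Your computation with $G_{00}(K(n);t)$ and its non-reciprocity therefore shows only that the eight knots fall into four mutually distinct classes of two; it cannot show, for instance, that $K(n)\neq -K(n)^{\#}$. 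Since the proposition asserts that all eight knots $\pm K,\pm K^{\#},\pm K^{*},\pm K^{\#*}$ are mutually distinct, ``distinct polynomial tuples already give that'' is false: four of the required inequivalences are untouched by your invariants, so the proof as written does not establish the statement.

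The paper avoids this by working with the writhe polynomials instead (Theorem~\ref{thm51}), which \emph{do} see the difference between $\#$ and $-$: $W_a(K^{\#};t)=-W_{a'}(K;t)$ while $W_a(-K;t)=+W_{a'}(K;t)$. It takes $K'(n)$ from Example~\ref{ex42}, for which $W_0=t^{n}-1$ and $W_1=n(t-1)$, so the eight polynomials $\pm W_0(K'(n);t^{\pm1})$, $\pm W_1(K'(n);t^{\pm1})$ are mutually distinct and the eight knots are separated at once. Your argument can be repaired along the same lines: keep $K(n)$ if you like, but add Theorem~\ref{thm51} to handle the four within-class pairs, e.g. $W_1(-K(n);t)=W_0(K(n);t)\neq -W_0(K(n);t)=W_1(K(n)^{\#};t)$ since $W_0(K(n);t)=t^{n}-nt+n-1\neq0$, and the analogous comparisons with $t\mapsto t^{-1}$ for the starred pairs. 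Without some invariant that distinguishes $\#$ from $-$, no amount of bookkeeping with $F_{ab},G_{ab},H_{ab}$ alone can finish the proof.
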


\begin{proof}
For an integer $n\geq2$, 
we consider the long virtual knot $K'(n)$ 
given in Example~\ref{ex42}. 
Since it holds that 
\[
W_0(K'(n);t)=t^{n}-1 \text{ and } 
W_1(K'(n);t)=n(t-1),\] 
the eight writhe polynomials 
\[\pm W_0(K'(n);t), \ \pm W_1(K'(n);t),  \ \pm W_0(K'(n);t^{-1}), \
\mbox{ and }\pm W_1(K'(n);t^{-1})\] are mutually distinct.
Therefore, the conclusion follows from Theorem~\ref{thm51}. 
\end{proof}


\section{The product of long virtual knots}\label{sec6} 

Let $D$ and $D'$ be diagrams of long virtual knots $K$ and $K'$, respectively. 
Concatenating $D'$ after $D$ yields 
a new diagram $D''=D\circ D'$. 
The {\it product} of $K$ and $K'$ is defined as  
the long virtual knot presented by $D''$, 
and is denoted by $K\circ K'$. 
This operation is well-defined; 
that is, it does not depend on the choice of diagrams $D$ and $D'$. 
The aim of this section is to prove the following two theorems. 

\begin{theorem}\label{thm61}
For any $a\in\{0,1\}$, we have 
\[W_a(K\circ K';t)=W_a(K;t)+W_a(K';t).\]
\end{theorem}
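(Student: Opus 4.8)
\textbf{Proof plan for Theorem~\ref{thm61}.}
The plan is to work entirely at the level of diagrams and their surface realizations, and to track how the defining data of $W_a$ behaves under concatenation. First I would choose diagrams $D$ of $K$ and $D'$ of $K'$ and form $D''=D\circ D'$; since $W_a$ is a long virtual knot invariant (Theorem~\ref{thm21}), it suffices to compute $W_a(D'';t)$ for this particular choice. The set of real crossings of $D''$ is the disjoint union of those of $D$ and those of $D'$, so if $c_i$ is a crossing of $D$ (resp.\ $D'$) then passing along $D''$ from $-\infty$ to $\infty$ encounters both strands of $c_i$ while still inside the $D$-part (resp.\ the $D'$-part). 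Hence the type of each crossing is unchanged: $I_a(D'')=I_a(D)\sqcup I_a(D')$ for $a\in\{0,1\}$, and the signs $\varepsilon_i$ are also unchanged.

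Next I would analyze the cycles and their intersection numbers. Given surface realizations $(\Sigma_g,D)$ and $(\Sigma_{g'},D')$, a natural surface realization of $D''$ is obtained by a boundary connected sum type construction: take $\Sigma_{g+g'}$ (or a suitable stabilization) on which the concatenated knot diagram sits, with the basepoint of $D''$ corresponding to $\pm\infty$ of $D''$. The key geometric observation is that for a crossing $c_i$ coming from $D$, the smoothing loops $\alpha_i$ and $\beta_i$ (the basepoint-avoiding and basepoint-containing loops) can be taken to be supported in the $\Sigma_g$-summand together with an arc running to the basepoint, and likewise for crossings from $D'$ in the $\Sigma_{g'}$-summand; in particular the self-intersection number $\alpha_i\cdot\beta_i$ computed in $\Sigma_{g+g'}$ agrees with the one computed in the original realization $\Sigma_g$ (resp.\ $\Sigma_{g'}$), because the intersection number is unchanged under (de)stabilization and the relevant loops do not meet the other summand. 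This gives $\alpha_i\cdot\beta_i$ in $D''$ equal to its value in $D$ or $D'$ according to where $c_i$ comes from.

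With these two facts the computation is immediate:
\begin{align*}
W_a(D'';t)
&=\sum_{i\in I_a(D'')}\varepsilon_i\bigl(t^{\alpha_i\cdot\beta_i}-1\bigr)
=\sum_{i\in I_a(D)}\varepsilon_i\bigl(t^{\alpha_i\cdot\beta_i}-1\bigr)
+\sum_{i\in I_a(D')}\varepsilon_i\bigl(t^{\alpha_i\cdot\beta_i}-1\bigr)\\
&=W_a(D;t)+W_a(D';t)=W_a(K;t)+W_a(K';t).
\end{align*}

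\textbf{Main obstacle.} The routine part is the bookkeeping on crossing types and signs; the substantive point is the claim that the self-intersection numbers $\alpha_i\cdot\beta_i$ are literally preserved under concatenation, which amounts to choosing the surface realization of $D''$ correctly and checking that the loop associated with a crossing of $D$ does not acquire extra intersections with the part of the surface carrying $D'$ (and symmetrically). I expect this to be where care is needed: one must specify precisely how the basepoint-containing loops $\beta_i$ are routed through the concatenation point, and verify that any new intersections introduced there come in cancelling pairs or can be removed by an isotopy, so that the homological intersection number is genuinely additive with no cross terms. Once that geometric lemma is in place, the theorem follows by the displayed computation.
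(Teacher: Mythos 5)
Your proposal is correct and follows essentially the same route as the paper: the paper forms the connected-sum realization $(\Sigma_{g+g'},D'')$ and records in its Lemma~\ref{lem63} that $\alpha_i''=\alpha_i$ while $\beta_i''=\beta_i+\gamma_{D'}$ (and symmetrically for crossings of $D'$), so $\alpha_i''\cdot\beta_i''=\alpha_i\cdot\beta_i$ because cycles in different summands have zero intersection number, which is exactly the ``no cross terms'' point you flag as the main obstacle. The only imprecision is your statement that $\beta_i$ can be taken supported in the $\Sigma_g$-summand: in $D''$ the basepoint-containing loop necessarily traverses all of $D'$, but this is harmless since homologically it only adds $\gamma_{D'}$, and $\alpha_i\cdot\gamma_{D'}=0$.
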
 

\begin{theorem}\label{thm62}
For any $a,b\in\{0,1\}$, we have 
\begin{enumerate}
\item
$F_{ab}(K\circ K';t)=F_{ab}(K;t)+F_{ab}(K';t)$. 
\item
$G_{ab}(K\circ K';t)=G_{ab}(K;t)+G_{ab}(K';t)$. 
\item
$H_{ab}(K\circ K';t)=H_{ab}(K;t)+H_{ab}(K';t)$\\
\phantom{$H_{ab}(K\circ K';t)=$}
$+W_a(K;t^{-1})W_b(K';t)+W_a(K';t^{-1})W_b(K;t)$. 
\end{enumerate}
\end{theorem}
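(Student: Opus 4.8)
The plan is to fix diagrams $D$ and $D'$ of $K$ and $K'$ and to work with the concatenated diagram $D'' = D \circ D'$, analyzing how the cycles and intersection numbers on a surface realization of $D''$ decompose. By Remark~\ref{rem34} we may assume both $D$ and $D'$ are untwisted, hence so is $D''$, and then $G_{ab}(K'';t) = g_{ab}(D'';t)$ and $H_{ab}(K'';t) = h_{ab}(D'';t)$, which removes the $\omega$-correction terms from the bookkeeping. The real crossings of $D''$ are the disjoint union of those of $D$ and those of $D'$; write $I_a(D'') = I_a(D) \sqcup I_a(D')$ (with the latter indices suitably shifted). The key geometric observation is that a surface realization of $D''$ can be built from surface realizations of $D$ and $D'$ joined along an arc, so that for a crossing $c_i$ coming from $D$, the cycle $\alpha_i$ is entirely supported in the $D$-part, and likewise for $D'$; moreover $\gamma_{D''} = \gamma_D + \gamma_{D'}$ as homology classes on the joined surface.

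The heart of the computation is to determine the four types of products $\gamma \cdot \gamma'$ where $\gamma$ comes from a crossing of $D$ and $\gamma'$ from a crossing of $D'$ (and the three ``mixed within one factor'' cases, which are unchanged). First I would establish: if $i$ comes from $D$ and $j$ comes from $D'$, then $\alpha_i \cdot \alpha_j = 0$ (each $\alpha$ lives in its own summand of the homology, which are orthogonal), giving $F_{ab}$-additivity immediately via the definition $f_{ab}(D'';t) = \sum_{i,j \in I_a(D'') \times I_b(D'')}\varepsilon_i\varepsilon_j(t^{\alpha_i\cdot\alpha_j}-1)$ splitting into the two diagonal blocks. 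For $g_{ab}$ the relevant quantity is $\alpha_i \cdot \beta_j = \alpha_i \cdot (\gamma_{D''} - \alpha_j)$; when $i$ is from $D$ and $j$ from $D'$ this is $\alpha_i \cdot \gamma_{D''} - 0 = \alpha_i \cdot \gamma_{D} = \alpha_i \cdot \beta_i$ summed appropriately — but actually the cleanest route is $\alpha_i \cdot \gamma_{D''} = \alpha_i\cdot\gamma_D$ since $\alpha_i$ meets nothing in the $D'$-part; one must also check $\alpha_i \cdot \beta_j = 0$ when $i$ from $D'$, $j$ from $D$ (here $\alpha_i$ lies in the $D'$-part while $\beta_j = \gamma_{D''} - \alpha_j$ meets the $D'$-part only in $\gamma_{D'}$, and $\alpha_i \cdot \gamma_{D'} = \alpha_i \cdot \beta_i$ within $D'$, which is generally nonzero — so this needs care). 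The asymmetry here, and the fact that the basepoint of $D''$ sits at one end so that $\beta$-cycles ``see'' the whole curve, is what produces the cross terms $W_a(K;t^{-1})W_b(K';t) + W_a(K';t^{-1})W_b(K;t)$ in the $H$-formula but not in $F$ or $G$.

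For $h_{ab}$ one expands $\beta_i \cdot \beta_j = (\gamma_{D''} - \alpha_i)\cdot(\gamma_{D''} - \alpha_j) = -\alpha_i\cdot\gamma_{D''} - \gamma_{D''}\cdot\alpha_j + \alpha_i\cdot\alpha_j$ (using $\gamma_{D''}\cdot\gamma_{D''}=0$). When $i$ is from $D$ and $j$ from $D'$, the last term vanishes, $\alpha_i\cdot\gamma_{D''} = \alpha_i\cdot\gamma_D = \alpha_i\cdot\beta_i$, and $\gamma_{D''}\cdot\alpha_j = \gamma_{D'}\cdot\alpha_j = -\alpha_j\cdot\beta_j$, so $\beta_i\cdot\beta_j = -\alpha_i\cdot\beta_i + \alpha_j\cdot\beta_j$; hence the $(D,D')$-block of $\sum\varepsilon_i\varepsilon_j(t^{\beta_i\cdot\beta_j}-1)$ becomes $\sum_{i\in I_a(D)}\sum_{j\in I_b(D')}\varepsilon_i\varepsilon_j(t^{-\alpha_i\cdot\beta_i + \alpha_j\cdot\beta_j}-1)$, which after adding and subtracting $1$ inside and factoring splits into a product of $W_a(K;t^{-1})$-type and $W_b(K';t)$-type sums plus lower terms — this is exactly the algebraic identity $(t^{x+y}-1) = (t^x-1)(t^y-1) + (t^x-1) + (t^y-1)$ that generates the bilinear cross term together with pieces absorbed by the diagonal $h_{ab}$ blocks and Theorem~\ref{thm61}. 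The symmetric $(D',D)$-block gives the other cross term. I expect the main obstacle to be the careful verification of the mixed intersection numbers — specifically pinning down exactly which of $\alpha_i\cdot\gamma_{D''}$, $\gamma_{D''}\cdot\alpha_j$ equal an ``$\alpha\cdot\beta$ within one factor'' versus vanish, which depends on whether the index is from $D$ or $D'$ and on the basepoint convention — and then making the telescoping identity $(t^{x+y}-1)=(t^x-1)(t^y-1)+(t^x-1)+(t^y-1)$ bookkeep correctly against the untwisted normalization so that the leftover linear pieces really do cancel. Once the mixed intersection numbers are nailed down, items (i) and (ii) are short, and (iii) follows by collecting the four blocks.
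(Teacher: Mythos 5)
Your proposal follows essentially the same route as the paper: build the surface realization of $D''$ as the connected sum of those of $D$ and $D'$, compute the mixed intersection numbers exactly as in the paper's Lemma~\ref{lem63} (including $\beta_i''\cdot\beta_j''=-\alpha_i\cdot\beta_i+\alpha_j'\cdot\beta_j'$), and use the identity $t^{x+y}-1=(t^x-1)(t^y-1)+(t^x-1)+(t^y-1)$ together with the untwisted normalization of Remark~\ref{rem34} to isolate the bilinear cross terms in (iii). The one step you flag as ``needing care''---the mixed $g_{ab}$-blocks whose exponent $\alpha_i'\cdot\beta_i'$ (or $\alpha_i\cdot\beta_i$) is nonzero---resolves exactly as your setup anticipates: each such block factors as $\omega_b(D)\,W_a(K';t)$ or $\omega_b(D')\,W_a(K;t)$ and vanishes because the diagrams are untwisted, which is precisely the paper's computation.
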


Let $c_1,\dots,c_n$ and $c_{n+1}'\dots,c_m'$ 
be the real crossings of $D$ and $D'$, respectively. 
We attach primes and double primes to the cycles, 
signs, and index sets of $D'$ and $D''$, 
respectively. 
We remark that 
$I_a''=I_a\cup I_a'$ holds for any $a\in\{0,1\}$. 

\begin{lemma}\label{lem63}
The intersection numbers $\alpha_i''\cdot\alpha_j''$, 
$\alpha_i''\cdot\beta_j''$, and $\beta_i''\cdot\beta_j''$ 
among the cycles from $D''$ 
are given as shown in {\rm Table~\ref{int}}. 
\end{lemma}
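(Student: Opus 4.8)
\textbf{Proof plan for Lemma~\ref{lem63}.}
The plan is to compute the relevant intersection numbers by analyzing how the cycles $\alpha_i'', \beta_i''$ on a surface realization of $D'' = D \circ D'$ decompose in terms of the cycles $\alpha_i, \beta_i$ coming from $D$ and $\alpha_j', \beta_j'$ coming from $D'$. First I would fix surface realizations $(\Sigma_g, D)$ and $(\Sigma_{g'}, D')$ and build a surface realization of $D''$ by taking a connected sum (or, more precisely, a boundary-connected-sum-type gluing along the arcs near $\pm\infty$) of the two surfaces, so that $\Sigma_{g''}$ contains disjoint copies of $\Sigma_g \setminus (\text{disk})$ and $\Sigma_{g'} \setminus (\text{disk})$. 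The key geometric observation is that a cycle supported entirely in the $D$-part never meets a cycle supported entirely in the $D'$-part, since they live in disjoint subsurfaces; all the interaction comes through the class $\gamma_{D''}$ of the long knot itself.

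The main step is to record, for each real crossing $c_i$ of $D''$, how $\alpha_i''$ and $\beta_i''$ are expressed. For $i \in \{1,\dots,n\}$ (a crossing from $D$), smoothing at $c_i$ splits $D''$ into the loop $\alpha_i''$ not containing the basepoint and the loop $\beta_i''$ containing it; I expect $\alpha_i'' = \alpha_i$ (it is a small loop living in the $D$-part, unaffected by concatenating $D'$ afterwards) while $\beta_i'' = \beta_i + \gamma_{D'}$, since the based loop now also runs through all of $D'$. Symmetrically, for $j \in \{n+1,\dots,m\}$ (a crossing from $D'$), one gets $\alpha_j'' = \alpha_j'$ and $\beta_j'' = \beta_j' + \gamma_D$, bearing in mind that the basepoint of $D''$ sits at the $-\infty$ end, so for $D'$ the based loop picks up all of $D$ but the unbased loop is just the original $\alpha_j'$. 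I would then feed these expressions into bilinearity of the intersection pairing, using $\alpha_i \cdot \gamma_{D'} = 0$, $\beta_i \cdot \gamma_{D'} = 0$, $\alpha_j' \cdot \gamma_D = 0$, $\beta_j' \cdot \gamma_D = 0$ (disjoint supports), together with $\gamma_D \cdot \gamma_{D'} = 0$ and the identities $\alpha_i + \beta_i = \gamma_D$, $\alpha_j' + \beta_j' = \gamma_{D'}$ from the Remark after Theorem~\ref{thm21}. This yields, for a $D$-crossing $c_i$ and a $D'$-crossing $c_j$, relations such as $\alpha_i'' \cdot \alpha_j'' = 0$, $\alpha_i'' \cdot \beta_j'' = \alpha_i \cdot \gamma_D = \alpha_i \cdot \beta_i$-type terms, $\beta_i'' \cdot \alpha_j'' = 0$, and $\beta_i'' \cdot \beta_j'' = \gamma_{D'} \cdot \gamma_D + (\text{cross terms}) = 0$, while for two crossings from the same factor the numbers agree with the ones already computed in that factor; I would then simply transcribe the resulting case distinctions into the entries of Table~\ref{int}.

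The main obstacle I anticipate is pinning down the sign conventions and the precise basepoint bookkeeping: because the basepoint of $D''$ is fixed at one end, the roles of $\alpha$ and $\beta$ are not symmetric between the two factors, and one must be careful that a based loop ``going around $D'$'' contributes $+\gamma_{D'}$ rather than $-\gamma_{D'}$, and that the orientation of the gluing does not flip a sign in the pairing $\gamma_D \cdot \gamma_{D'}$. I would settle this once and for all on a small explicit picture (the analogue of Figure~\ref{fig:cycles} for a two-crossing example, one crossing in each factor) and then argue that the general case follows by the same local computation, since adding further crossings only adds further summands that are already accounted for by the single-crossing analysis. Once the decomposition $\beta_i'' = \beta_i + \gamma_{D'}$ (and its mirror) is justified, everything else is a mechanical application of bilinearity, and the table entries drop out.
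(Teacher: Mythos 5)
Your setup coincides with the paper's proof: take the connected sum $(\Sigma_{g+g'},D'')$ of the two surface realizations, record the homological decompositions $\alpha_i''=\alpha_i$, $\beta_i''=\beta_i+\gamma_{D'}$ for a crossing coming from $D$ and $\alpha_j''=\alpha_j'$, $\beta_j''=\beta_j'+\gamma_D$ for a crossing coming from $D'$ (together with $\gamma_{D''}=\gamma_D+\gamma_{D'}$), and expand by bilinearity using the vanishing of intersections between classes supported on the two different summands. The same-factor entries and the mixed entry $\alpha_i''\cdot\beta_j''=\alpha_i\cdot\gamma_D=\alpha_i\cdot\beta_i$ come out correctly in your sketch.

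However, the values you assert for the remaining mixed entries are wrong and contradict Table~\ref{int}, which is the content of the lemma. Expanding $\beta_i''\cdot\beta_j''=(\beta_i+\gamma_{D'})\cdot(\beta_j'+\gamma_D)$ for $1\le i\le n<j\le m$, the terms $\beta_i\cdot\beta_j'$ and $\gamma_{D'}\cdot\gamma_D$ do vanish by disjointness of supports, but the two remaining terms are intra-surface and do not vanish: $\beta_i\cdot\gamma_D=\beta_i\cdot(\alpha_i+\beta_i)=-\alpha_i\cdot\beta_i$ and $\gamma_{D'}\cdot\beta_j'=(\alpha_j'+\beta_j')\cdot\beta_j'=\alpha_j'\cdot\beta_j'$, so $\beta_i''\cdot\beta_j''=-\alpha_i\cdot\beta_i+\alpha_j'\cdot\beta_j'$, not $0$ as you claim. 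Likewise $\beta_i''\cdot\alpha_j''=(\beta_i+\gamma_{D'})\cdot\alpha_j'=\gamma_{D'}\cdot\alpha_j'=-\alpha_j'\cdot\beta_j'$, i.e.\ $\alpha_j''\cdot\beta_i''=\alpha_j'\cdot\beta_j'$ as the table records; it is not zero either. This is not a harmless slip: these nonvanishing mixed $\beta\beta$ entries are exactly what produce the non-additive correction $W_a(K;t^{-1})W_b(K';t)+W_a(K';t^{-1})W_b(K;t)$ in Theorem~\ref{thm62}(iii), and with your zeros $H_{ab}$ would come out additive, which is false. The repair is mechanical---carry out the bilinear expansion you describe without discarding the intra-surface cross terms---but as written this step fails and would not reproduce the table.
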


\begin{table}[htb]
\caption{The intersection numbers for $D''$} 
\label{int}
\begin{center}
\renewcommand{\arraystretch}{1.3}
\begin{tabular}{|c||c|c|}
\hline
$\alpha_i''\cdot\alpha_j''$ & 
$1\leq j\leq n$ & $n+1\leq j\leq m$ \\
\hline\hline
$1\leq i\leq n$ & $\alpha_i\cdot\alpha_j$ & $0$ \\
\hline
$n+1\leq i\leq m$ & $0$ & $\alpha_i'\cdot\alpha_j'$ \\
\hline
\end{tabular}
\end{center}

\vspace{2mm}
\begin{center}
\renewcommand{\arraystretch}{1.3}
\begin{tabular}{|c||c|c|}
\hline
$\alpha_i''\cdot\beta_j''$ & 
$1\leq j\leq n$ & $n+1\leq j\leq m$ \\
\hline\hline
$1\leq i\leq n$ & 
$\alpha_i\cdot\beta_j$ & $\alpha_i\cdot\beta_i$ \\
\hline
$n+1\leq i\leq m$ & 
$\alpha_i'\cdot\beta_i'$ & $\alpha_i'\cdot\beta_j'$ \\
\hline
\end{tabular}
\end{center}

\vspace{2mm}
\begin{center}
\renewcommand{\arraystretch}{1.3}
\begin{tabular}{|c||c|c|}
\hline
$\beta_i''\cdot\beta_j''$ & 
$1\leq j\leq n$ & $n+1\leq j\leq m$ \\
\hline\hline
$1\leq i\leq n$ & 
$\beta_i\cdot\beta_j$ & 
$-\alpha_i\cdot\beta_i+\alpha_j'\cdot\beta_j'$ \\
\hline
$n+1\leq i\leq m$ & 
$-\alpha_i'\cdot\beta_i'+\alpha_j\cdot\beta_j$ 
& $\beta_i'\cdot\beta_j'$ \\ \hline
\end{tabular}
\end{center}
\end{table}

\begin{proof}
Let $(\Sigma_g,D)$ and $(\Sigma_{g'},D')$ 
be surface realizations of $D$ and $D'$, respectively. 
The connected sum of $\Sigma_g$ and $\Sigma_{g'}$ 
as shown in Figure~\ref{fig:product} provides 
a surface realization $(\Sigma_{g+g'},D'')$. 
Then it holds that 
\begin{align*}
\alpha_i''&=
\begin{cases}
\alpha_i & \mbox{for }1\leq i\leq n,\\
\alpha_i' & \mbox{for }n+1\leq i\leq m,
\end{cases}\\
\beta_i''&=
\begin{cases}
\beta_i+\gamma_{D'} & \mbox{for }1\leq i\leq n,\\
\beta_i'+\gamma_D & \mbox{for }n+1\leq i\leq m,
\end{cases}\\
\gamma_{D''}&=\gamma_D+\gamma_{D'}.
\end{align*}

\begin{figure}[htbp]
  \centering
    \begin{overpic}[]{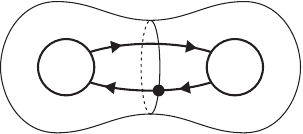}
      \put(27.5,28){$D$}
      \put(108,28){$D'$}
    \end{overpic}
  \vspace{1em}
  \caption{A surface realization $(\Sigma_{g+g'},D'')$}
  \label{fig:product}
\end{figure}

Since the intersection number of 
a cycle on $\Sigma_g$ and a cycle on $\Sigma_{g'}$ 
is equal to zero, 
the conclusion follows: 
For example, for $1\leq i\leq n$ and $n+1\leq j\leq m$, 
we have 
\begin{align*}
\alpha_i''\cdot\alpha_j''&=\alpha_i\cdot\alpha_j'=0,\\
\alpha_i''\cdot\beta_j''&=
\alpha_i\cdot(\beta_j'+\gamma_D)
=\alpha_i\cdot\gamma_D
=\alpha_i\cdot(\alpha_i+\beta_i)=\alpha_i\cdot\beta_i,
\mbox{ and}\\
\beta_i''\cdot\beta_j''&=
(\beta_i+\gamma_{D'})\cdot(\beta_j'+\gamma_D)
=
\beta_i\cdot\gamma_D+\gamma_{D'}\cdot\beta_j'\\
&=\beta_i\cdot(\alpha_i+\beta_i)+(\alpha_j'+\beta_j')\cdot\beta_j'
=-\alpha_i\cdot\beta_i+\alpha_j'\cdot\beta_j'. 
\end{align*}
\end{proof}

\begin{proof}[Proof of {\rm Theorem~\ref{thm61}}.]
By Lemma~\ref{lem63}, we have 
\begin{align*}
W_a(K\circ K';t)&=
\sum_{i\in I_a''}\e_i''(t^{\alpha_i''\cdot\beta_i''}-1)\\
&=\sum_{i\in I_a}\e_i(t^{\alpha_i\cdot\beta_i}-1)
+\sum_{i\in I_a'}\e_i'(t^{\alpha_i'\cdot\beta_i'}-1)\\
&=W_a(K;t)+W_a(K';t). 
\end{align*}
\end{proof}

\begin{proof}[Proof of {\rm Theorem~\ref{thm62}}.]
We may assume that $D$ and $D'$ are untwisted, 
and hence, so is $D''$. 
Then it follows from Lemma~\ref{lem63} that 
\begin{align*}
F_{ab}(K\circ K';t)&=
\sum_{i\in I_a,\, j\in I_b}
\e_i\e_j(t^{\alpha_i\cdot\alpha_j}-1)
+\sum_{i\in I_a',\, j\in I_b'}
\e_i'\e_j'(t^{\alpha_i'\cdot\alpha_j'}-1)\\
&=
F_{ab}(K;t)+F_{ab}(K';t)\mbox{ and}\\
G_{ab}(K\circ K';t)&=
\sum_{i\in I_a,\, j\in I_b}
\e_i\e_j(t^{\alpha_i\cdot\beta_j}-1)
+\sum_{i\in I_a,\, j\in I_b'}
\e_i\e_j'(t^{\alpha_i\cdot\beta_i}-1)\\
&\quad 
+\sum_{i\in I_a',\, j\in I_b}
\e_i'\e_j(t^{\alpha_i'\cdot\beta_i'}-1)
+\sum_{i\in I_a',\, j\in I_b'}
\e_i'\e_j'(t^{\alpha_i'\cdot\beta_j'}-1)\\
&=G_{ab}(K;t)+\omega_b(D')W_a(K;t)
+\omega_{b}(D)W_a(K';t)+G_{ab}(K';t)\\
&=G_{ab}(K;t)+G_{ab}(K';t).
\end{align*}
Moreover, we have 
\begin{align*}
H_{ab}(K\circ K';t)&=
\sum_{i\in I_a,\, j\in I_b}
\e_i\e_j(t^{\beta_i\cdot\beta_j}-1)
+\sum_{i\in I_a,\, j\in I_b'}
\e_i\e_j'(t^{-\alpha_i\cdot\beta_i+\alpha_j'\cdot\beta_j'}-1)\\
&\quad 
+\sum_{i\in I_a',\, j\in I_b}
\e_i'\e_j(t^{-\alpha_i'\cdot\beta_i'+\alpha_j\cdot\beta_j}-1)
+\sum_{i\in I_a',\, j\in I_b'}
\e_i'\e_j'(t^{\beta_i'\cdot\beta_j'}-1). 
\end{align*}
The second sum in the right hand side is equal to 
\begin{align*}
&\sum_{i\in I_a,\, I_b'}\e_i\e_j'(t^{-\alpha_i\cdot\beta_i}-1)
(t^{\alpha_j'\cdot\beta_j'}-1)\\
&+\sum_{i\in I_a,\, j\in I_b'}\e_i\e_j'(t^{-\alpha_i\cdot\beta_i}-1)
+\sum_{i\in I_a,\, j\in I_b'}\e_i\e_j'(t^{\alpha_j'\cdot\beta_j'}-1)\\
=&
W_a(K;t^{-1})W_b(K';t)+\omega_b(D')W_a(K;t^{-1})+\omega_a(D) W_b(K';t)\\
=&W_a(K;t^{-1})W_b(K';t), 
\end{align*}
and the third sum is equal to 
$W_a(K';t^{-1})W_b(K;t)$. 
Therefore, we have 
\begin{align*}
H_{ab}(K\circ K';t)&=
H_{ab}(K;t)+W_a(K;t^{-1})W_b(K';t)\\
&\quad 
+W_a(K';t^{-1})W_b(K;t)+H_{ab}(K';t). 
\end{align*}
\end{proof}

\section{Invariants under crossing changes}\label{sec7}

A {\it crossing change} is a local move 
on a long virtual knot diagram 
that switches the over/under information 
at a real crossing. 
The aim of section is to prove the following 
two theorems. 

\begin{theorem}\label{thm71}
The Laurent polynomial 
\[\widetilde{W}(K;t)=W_0(K;t)-W_1(K;t)\] 
is invariant under crossing changes. 
\end{theorem}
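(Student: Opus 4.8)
The plan is to show that $\widetilde{W}(K;t)=W_0(K;t)-W_1(K;t)$ is unchanged when a single crossing change is performed at a real crossing $c_1$ of a diagram $D$, producing a diagram $\widetilde{D}$. The key observation is that a crossing change swaps the type of $c_1$: if $c_1$ is of type $0$ in $D$ (we pass over before under), then after switching the over/under information we pass under before over, so $c_1$ becomes type $1$ in $\widetilde{D}$, and vice versa. Moreover, since the underlying flat diagram (the Gauss data without over/under) is unchanged, the homology cycles $\alpha_1,\beta_1$ obtained by smoothing at $c_1$ are the same in both diagrams, as are all cycles $\alpha_i,\beta_i$ for $i\neq 1$. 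What does change is the sign: switching the crossing reverses $\varepsilon_1$, so $\widetilde{\varepsilon}_1=-\varepsilon_1$.

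First I would set up notation: let $c_1$ be the crossing at which we perform the change, and without loss of generality say $c_1$ is of type $0$ in $D$, so $1\in I_0(D)$ and $1\in I_1(\widetilde{D})$, while $I_0(\widetilde{D})=I_0(D)\setminus\{1\}$ and $I_1(\widetilde{D})=I_1(D)\cup\{1\}$. For $i\neq 1$ the type, sign, and cycles are all preserved. Then I would write, using Theorem~\ref{thm21} (working at the level of a fixed surface realization, which suffices since $\widetilde{W}$ is built from the invariants $W_a$),
\begin{align*}
W_0(\widetilde{D};t)&=\sum_{i\in I_0(D)\setminus\{1\}}\varepsilon_i(t^{\alpha_i\cdot\beta_i}-1),\\
W_1(\widetilde{D};t)&=\sum_{i\in I_1(D)}\varepsilon_i(t^{\alpha_i\cdot\beta_i}-1)+(-\varepsilon_1)(t^{\alpha_1\cdot\beta_1}-1).
\end{align*}
Subtracting,
\[
W_0(\widetilde{D};t)-W_1(\widetilde{D};t)=\sum_{i\in I_0(D)}\varepsilon_i(t^{\alpha_i\cdot\beta_i}-1)-\varepsilon_1(t^{\alpha_1\cdot\beta_1}-1)-\sum_{i\in I_1(D)}\varepsilon_i(t^{\alpha_i\cdot\beta_i}-1)+\varepsilon_1(t^{\alpha_1\cdot\beta_1}-1),
\]
where the two $\varepsilon_1$-terms cancel, leaving exactly $W_0(D;t)-W_1(D;t)=\widetilde{W}(K;t)$. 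The case where $c_1$ is of type $1$ in $D$ is symmetric.

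The only genuinely delicate point — and the step I would be most careful about — is justifying that a crossing change really does toggle the type of $c_1$ and preserve everything else, in particular that $\alpha_1\cdot\beta_1$ is literally the same integer before and after. This follows because the smoothing operation at $c_1$ and the resulting pair of loops $\alpha_1,\beta_1$ depend only on the orientations and the combinatorial position of $c_1$ along $D$, not on which strand is over; reversing over/under leaves the underlying immersed curve (and hence the surface realization's flat structure) untouched, so the cycles and all intersection numbers $\gamma\cdot\gamma'$ are unchanged. Once this bookkeeping is in place, the cancellation above is immediate, and there is no further obstacle. Since $\widetilde W$ is a $\mathbb{Z}$-linear combination of the invariants $W_0,W_1$ of Theorem~\ref{thm21}, it is already a well-defined invariant of $K$; the content of Theorem~\ref{thm71} is precisely the crossing-change invariance just verified.
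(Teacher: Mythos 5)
Your proof is correct and follows essentially the same route as the paper's: observe that a crossing change toggles the type of the crossing, negates its sign, and leaves all cycles and intersection numbers unchanged, so the two contributions of the changed crossing to $W_0-W_1$ cancel. The paper's proof is just the same computation written with the changed crossing labelled $c_k$ and the crossings ordered by type.
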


\begin{theorem}\label{thm72}
For any $X\in\{F,G,H\}$, 
the Laurent polynomial 
\[\widetilde{X}(K;t)=X_{00}(K;t)-X_{01}(K;t)-X_{10}(K;t)+X_{11}(K;t)\] 
is invariant under crossing changes. 
\end{theorem}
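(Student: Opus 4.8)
The plan is to show that under a single crossing change at a real crossing $c_k$, the quantity $\widetilde{X}(K;t)$ is unchanged, for each $X\in\{F,G,H\}$. A crossing change at $c_k$ has two effects: it toggles the type of $c_k$ between $0$ and $1$, and it reverses the sign $\varepsilon_k$ (since the over/under strands are exchanged); crucially it does not change the underlying flat diagram, so every intersection number $\alpha_i\cdot\alpha_j$, $\alpha_i\cdot\beta_j$, $\beta_i\cdot\beta_j$ is preserved, and $\gamma_D$ is unchanged. I will carry the verification out first for the reduced ``small'' polynomials $f_{ab}$, $g_{ab}$, $h_{ab}$, assembling them into $\widetilde{f}=f_{00}-f_{01}-f_{10}+f_{11}$ and similarly $\widetilde{g},\widetilde{h}$, and then account for the correction terms involving $\omega_a$ and $W_a$.

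The key observation is that $\widetilde{f}(D;t)=\sum_{i,j}\sigma_i\sigma_j\varepsilon_i\varepsilon_j(t^{\alpha_i\cdot\alpha_j}-1)$, where $\sigma_i=+1$ if $i\in I_0$ and $\sigma_i=-1$ if $i\in I_1$; in other words $\widetilde f$ is a quadratic form in the ``signed-by-type'' weights $\sigma_i\varepsilon_i$, and likewise for $\widetilde g$ and $\widetilde h$. Under a crossing change at $c_k$, the type toggle sends $\sigma_k\mapsto-\sigma_k$ while the sign reversal sends $\varepsilon_k\mapsto-\varepsilon_k$, so the product $\sigma_k\varepsilon_k$ is \emph{invariant}. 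Since all intersection numbers are also invariant, $\widetilde f(D;t)$ is unchanged immediately, proving the case $X=F$. For $X=G$ and $X=H$ I would first note, by the same reasoning, that $\widetilde g(D;t)$ and $\widetilde h(D;t)$ are unchanged; it then remains to handle the correction terms. Writing $\widetilde W(K;t)=W_0(K;t)-W_1(K;t)=\sum_i \sigma_i\varepsilon_i(t^{\alpha_i\cdot\beta_i}-1)$ and $\widetilde\omega=\omega_0-\omega_1=\sum_i\sigma_i\varepsilon_i$, a short bookkeeping computation gives
\[
\widetilde G(K;t)=\widetilde g(D;t)-\widetilde\omega\,\widetilde W(K;t),\qquad
\widetilde H(K;t)=\widetilde h(D;t)-\widetilde\omega\,\widetilde W(K;t)-\widetilde\omega\,\widetilde W(K;t^{-1}),
\]
where each ingredient on the right is built only from the type-signed weights $\sigma_i\varepsilon_i$ and the intersection numbers; all of these are invariant under a crossing change, hence so are $\widetilde G$ and $\widetilde H$.

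I expect the main obstacle to be purely organizational rather than conceptual: verifying the two displayed identities relating $\widetilde G,\widetilde H$ to $\widetilde g,\widetilde h$ requires expanding $\sum_{a,b}(-1)^{a+b}\omega_b W_a$ and $\sum_{a,b}(-1)^{a+b}(\omega_a W_b+\omega_b W_a(t^{-1}))$ and recognizing the resulting cross-terms as $\widetilde\omega\,\widetilde W$ up to sign, using $\widetilde W(K;t^{-1})=W_0(K;t^{-1})-W_1(K;t^{-1})$; one must be careful that $\widetilde W$ is already known to be a crossing-change invariant by Theorem~\ref{thm71}, so even the ``$\widetilde W(K;t^{-1})$'' piece is safe. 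As a sanity check I would confirm on a diagram where the crossing change merges two opposite crossings (so that after an ensuing Reidemeister~II the count drops), that the formulas are consistent with Lemmas~\ref{lem31} and~\ref{lem32}. Once the two identities are in hand the theorem follows in one line from the invariance of $\sigma_i\varepsilon_i$ and of the intersection numbers.
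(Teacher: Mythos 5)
Your proposal is correct, and its overall structure matches the paper's: you reduce $\widetilde X$ to the ``small'' polynomials plus correction terms via the identities $\widetilde G=\widetilde g-\widetilde\omega\,\widetilde W$ and $\widetilde H=\widetilde h-\widetilde\omega\,\widetilde W(K;t)-\widetilde\omega\,\widetilde W(K;t^{-1})$ (which are exactly the identities the paper uses, and which do hold), and then invoke invariance of each ingredient. Where you genuinely differ is in how the invariance of $\widetilde f,\widetilde g,\widetilde h$ is established: the paper's Lemma~\ref{lem73} fixes the changed crossing $c_k$, splits the double sum into nine blocks $P_1,\dots,P_9$ according to whether each index is $<k$, $=k$, or $>k$, and checks by hand that the alternating combination is unchanged when $c_k$ moves from type $0$ to type $1$ with sign $-\e_k$. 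You instead observe that $\widetilde x(D;t)=\sum_{i,j}(\sigma_i\e_i)(\sigma_j\e_j)(t^{\,\cdot\,}-1)$ with $\sigma_i=(-1)^{a}$ for $i\in I_a$, and that a crossing change flips both $\sigma_k$ and $\e_k$ while leaving all smoothed cycles (hence all intersection numbers) untouched, so every type-signed weight $\sigma_i\e_i$ is preserved. This single observation simultaneously yields Theorem~\ref{thm71}, Lemma~\ref{lem73}, and the invariance of $\omega_0-\omega_1$, replacing the paper's case bookkeeping with a uniform one-line argument; the paper's version is more pedestrian but makes the cancellation completely explicit. Your argument is complete as sketched --- the only points to spell out in a final write-up are the routine facts you already flagged (the smoothing, and hence $\alpha_i,\beta_i$, is independent of over/under data; the crossing change reverses $\e_k$ and toggles the type of $c_k$ only), and the two displayed identities, whose verification is the elementary expansion you describe.
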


Let $D$ be a long virtual knot diagram of $K$ 
with $n$ real crossings $c_1,\dots, c_n$ such that 
\begin{itemize}
\item[(i)] 
$c_1,\dots,c_k$ are of type $0$, 
\item[(ii)] 
$c_{k+1},\dots,c_n$ are of type $1$, and 
\item[(iii)] 
the sign of $c_i$ is $\e_i$ $(1\leq i\leq n)$. 
\end{itemize}
Let $D'$ be the diagram obtained from $D$ 
by a crossing change at $c_k$, 
and $K'$ the long virtual knot 
presented by $D'$. 
We remark that 
$c_k$ in $D'$ is of type $1$ 
and has the sign $-\e_k$. 

\begin{proof}[Proof of {\rm Theorem~\ref{thm71}}.]
It follows by assumption that 
\begin{align*}
& I_0(D)=\{1,\dots,k-1,k\}, \ I_1(D)=\{k+1,\dots,n\}, \\
& I_0(D')=\{1,\dots,k-1\}, \mbox{ and }
I_1(D')=\{k,k+1,\dots,n\}.
\end{align*}
Since it holds that 
\begin{align*}
\widetilde{W}(D;t)
&=
\qty(\sum_{i=1}^{k-1}\e_i(t^{\alpha_i\cdot\beta_i}-1)
+\e_k(t^{\alpha_k\cdot\beta_k}-1))
-\sum_{i=k+1}^n\e_i(t^{\alpha_i\cdot\beta_i}-1)\mbox{ and}\\
\widetilde{W}(D';t)
&=
\sum_{i=1}^{k-1}\e_i(t^{\alpha_i\cdot\beta_i}-1)
-\qty((-\e_k)(t^{\alpha_k\cdot\beta_k}-1)
+\sum_{i=k+1}^n\e_i(t^{\alpha_i\cdot\beta_i}-1)), 
\end{align*}
we have $\widetilde{W}(D;t)=\widetilde{W}(D';t)$. 
\end{proof}

\begin{lemma}\label{lem73}
For any $x\in\{f,g,h\}$, 
the Laurent polynomial 
\[\widetilde{x}(D;t)=x_{00}(D;t)-x_{01}(D;t)-x_{10}(D;t)+x_{11}(D;t)\] 
is invariant under crossing changes. 
\end{lemma}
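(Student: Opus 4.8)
The plan is to track how each of the three "raw" bilinear quantities $x_{ab}(D;t)$ changes under the single crossing change at $c_k$ described above, and then take the alternating combination $\widetilde{x}=x_{00}-x_{01}-x_{10}+x_{11}$ so that the change cancels. The crossing change keeps the underlying flat diagram fixed, so all the cycles $\alpha_i,\beta_i$ and hence all intersection numbers $\alpha_i\cdot\alpha_j$, $\alpha_i\cdot\beta_j$, $\beta_i\cdot\beta_j$ are unchanged; the only things that change are the sign of $c_k$ (from $\e_k$ to $-\e_k$) and the type of $c_k$ (from $0$ to $1$, i.e.\ $k$ moves from $I_0$ to $I_1$). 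So I would set up, for a fixed $x\in\{f,g,h\}$, the difference $\widetilde{x}(D;t)-\widetilde{x}(D';t)$ as a sum over index pairs $(i,j)$, isolating the terms in which $i=k$ or $j=k$ (the diagonal term $i=j=k$ being a third case).

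First I would record the sign/type bookkeeping precisely: for $i\ne k$ the contribution of a pair $(i,j)$ with neither index equal to $k$ is literally identical in $D$ and $D'$ (same sign, same type, same intersection number), so it cancels. Next, for the "cross" terms with exactly one index equal to $k$, I would split according to the type $b$ of the other crossing. Writing $Q_{ij}$ for the relevant exponent $t^{(\cdot)}-1$ (which does not change), the term $\e_k\e_j Q_{kj}$ appears in $D$ inside $x_{0b}$ (since $k\in I_0(D)$) but in $D'$ inside $x_{1b}$ with coefficient $(-\e_k)\e_j$; so in $\widetilde{x}(D)$ it enters with sign $+$ for $b=0$ and $-$ for $b=1$, while in $\widetilde{x}(D')$ it enters as $-(-\e_k)\e_j Q_{kj}$ for $b=0$ (the $x_{10}$ term carries a minus in $\widetilde{x}$) and $+(-\e_k)\e_j Q_{kj}$ for $b=1$. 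In both cases the two contributions are equal, hence the difference vanishes; the same holds for the symmetric terms with $i$ arbitrary and $j=k$. Finally the diagonal term $i=j=k$: in $D$ it contributes $\e_k^2 Q_{kk}$ to $x_{00}$, hence $+\e_k^2 Q_{kk}$ to $\widetilde{x}(D)$; in $D'$ it contributes $(-\e_k)^2 Q_{kk}=\e_k^2 Q_{kk}$ to $x_{11}$, hence again $+\e_k^2 Q_{kk}$ to $\widetilde{x}(D')$, so this cancels too. Summing, $\widetilde{x}(D;t)=\widetilde{x}(D';t)$.

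The main obstacle is nothing conceptually deep but purely organizational: getting the four signs in the alternating sum to line up with the "type-swap" of $c_k$ correctly, and in particular being careful that the sign change $\e_k\mapsto-\e_k$ interacts with the \emph{products} $\e_i\e_j$ — a single sign flip on $c_k$ changes every cross term by a factor $-1$, but the simultaneous relabeling $0\leftrightarrow 1$ of the slot occupied by $k$ moves that term to a place in $\widetilde{x}$ carrying the opposite sign, so the two effects compensate; for the diagonal term the sign flip is squared away and the relabeling moves $x_{00}\to x_{11}$, which carry the \emph{same} sign in $\widetilde{x}$, so it is again consistent. I would present the computation once for general $x$, noting that $f,g,h$ differ only in which intersection number plays the role of $Q_{ij}$ and that this number is unaffected, so a single argument covers all three cases. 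A general crossing change is handled by symmetry (relabeling indices so the switched crossing is $c_k$, possibly swapping the roles of $I_0$ and $I_1$, which only permutes the four summands of $\widetilde{x}$ and leaves it invariant).
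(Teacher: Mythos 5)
Your proposal is correct and is essentially the paper's own argument: the paper also exploits that a crossing change fixes all cycles and intersection numbers while flipping only the sign and type of the changed crossing, and it performs exactly your bookkeeping (organized into nine block sums $P_1,\dots,P_9$ for the case $x=g$, the other cases being analogous) to see that the relocated terms re-enter $\widetilde{x}$ with compensating signs. Your term-by-term treatment of the off-diagonal and diagonal terms involving $c_k$, and the symmetry remark for a type-$1$ crossing, match the paper's proof in substance.
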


\begin{proof}
We prove only the case $x=g$; 
the other cases can be proved similarly. 

Put $p_{ij}=\e_i\e_j(t^{\alpha_i\cdot\beta_j}-1)$ 
and 
\begin{center}
\renewcommand{\arraystretch}{2}
\begin{tabular}{lll}
$\displaystyle{P_1=
\sum_{\substack{1\leq i\leq k-1\\1\leq j\leq k-1}}p_{ij}}$,
& 
$\displaystyle{P_2=
\sum_{1\leq i\leq k-1}p_{ik}}$,
& 
$\displaystyle{P_3=
\sum_{\substack{1\leq i\leq k-1\\k+1\leq j\leq n}}p_{ij}}$,\\
$\displaystyle{P_4=
\sum_{\substack{1\leq j\leq k-1}}p_{kj}}$,
& 
$\displaystyle{P_5=p_{kk}}$,
& 
$\displaystyle{P_6=
\sum_{k+1\leq j\leq n}p_{kj}}$,\\
$\displaystyle{P_7=
\sum_{\substack{k+1\leq i\leq n\\1\leq j\leq k-1}}p_{ij}}$,
& 
$\displaystyle{P_8=
\sum_{k+1\leq i\leq n}p_{ik}}$,
& 
$\displaystyle{P_9=
\sum_{\substack{k+1\leq i\leq n\\k+1\leq j\leq n}}p_{ij}}$.\\
\end{tabular}
\end{center}
Since $c_k$ in $D$ is of type $0$, 
it holds that 
\begin{center}
\renewcommand{\arraystretch}{1.5}
\begin{tabular}{ll}
$g_{00}(D;t)=P_1+P_2+P_4+P_5$, 
& 
$g_{01}(D;t)=P_3+P_6$, \\
$g_{10}(D;t)=P_7+P_8$, 
& 
$g_{11}(D;t)=P_9$.
\end{tabular}
\end{center}
On the other hand, 
since $c_k$ in $D'$ is of type $1$ 
with the sign $-\e_k$, 
it holds that 
\begin{center}
\renewcommand{\arraystretch}{1.5}
\begin{tabular}{ll}
$g_{00}(D';t)=P_1$, 
& 
$g_{01}(D';t)=-P_2+P_3$, \\
$g_{10}(D';t)=-P_4+P_7$, 
& 
$g_{11}(D';t)=P_5-P_6-P_8+P_9$.
\end{tabular}
\end{center}
Therefore, we have 
$\widetilde{g}(D;t)=\widetilde{g}(D';t)$. 
\end{proof}

\begin{proof}[Proof of {\rm Theorem~\ref{thm72}}.]
We prove only the case $X=G$; 
the other cases can be proved similarly. 
It follows by definition that 
\[\widetilde{G}(K;t)
=\widetilde{g}(D;t)
-\qty(\omega_0(D)-\omega_1(D))
\widetilde{W}(K;t).\]
Since it holds that 
\begin{align*}
\omega_0(D)-\omega_1(D)
&=\qty(\sum_{i=1}^{k-1}\e_i+\e_k)-\sum_{i=k+1}^n\e_i\\
&=\sum_{i=1}^{k-1}\e_i-\qty((-\e_k)+\sum_{i=k+1}^n\e_i)
=\omega_0(D')-\omega_1(D'),
\end{align*}
we have $\widetilde{G}(K;t)=\widetilde{G}(K';t)$ 
by Theorem~\ref{thm71} and 
Lemma~\ref{lem73}. 
\end{proof}

Theorems~\ref{thm71} and \ref{thm72} 
show that $\widetilde{W}$, $\widetilde{F}$, 
$\widetilde{G}$, and $\widetilde{H}$ 
are invariants of {\it flat} long virtual knots. 
Since the long virtual knots $K(n)$ and $K'(n)$ 
given in Section~\ref{sec4} have the same underlying curve $\Gamma(n)$, 
it holds that 
\begin{align*}
W_0(K(n);t)&=W_0(K'(n);t)-W_1(K'(n);t), \\
F_{00}(K(n);t)&=-F_{01}(K'(n);t)-F_{10}(K'(n);t),\\
G_{00}(K(n);t)&=-G_{01}(K'(n);t)-G_{10}(K'(n);t),\mbox{ and}\\
H_{00}(K(n);t)&=H_{00}(K'(n);t)-H_{01}(K'(n);t)-H_{10}(K'(n);t)+H_{11}(K'(n);t).
\end{align*}

A long virtual knot is called {\it descending} 
if it is presented by a descending diagram; 
that is, its all real crossings are of type $0$. 
By performing crossing changes to make a diagram descending, 
we obtain a natural map 
from the set of long virtual knots 
to that of descending long virtual knots (cf.~\cite[Section 2.2]{FKM}). 
Let $K^d$ denote the descending long virtual knot 
associated with a (possibly non-descending) long virtual knot $K$. 
Then we have the following. 

\begin{corollary}\label{cor74}
For a long virtual knot $K$, 
we have the following. 
\begin{enumerate}
\item
$W_a(K^d;t)=
\begin{cases}
W_0(K;t)-W_1(K;t) & \mbox{for }a=0,\\
0 & \mbox{for }a=1. 
\end{cases}$

\item
For any $X\in\{F,G,H\}$, 
\[X_{ab}(K^d;t)=
\begin{cases}
X_{00}(K;t)-X_{01}(K;t)-X_{10}(K;t)+X_{11}(K;t) & \mbox{for }a=b=0, \\
0 & otherwise. 
\end{cases}\]
\end{enumerate}
\end{corollary}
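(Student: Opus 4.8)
\textbf{Proof proposal for Corollary~\ref{cor74}.}
The plan is to observe that $K^d$ is, by construction, a long virtual knot all of whose real crossings are of type $0$, so in any diagram $D^d$ presenting $K^d$ we have $I_1(D^d)=\emptyset$. Applying the definitions in Theorems~\ref{thm21} and~\ref{thm22} directly to such a diagram kills every term indexed by $I_1$: the sum defining $W_1$ is empty, and for $X_{ab}$ any summand requiring $i\in I_1$ or $j\in I_1$ disappears, leaving only $W_0(K^d;t)$ and $X_{00}(K^d;t)$ as possibly nonzero. This handles the vanishing half of both statements (the $a=1$ case in (i) and the ``otherwise'' case in (ii)).

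For the remaining values, the point is that $K^d$ is obtained from $K$ by crossing changes only, and the combinations appearing on the right-hand sides are precisely the crossing-change invariants $\widetilde{W}$ and $\widetilde{X}$ of Theorems~\ref{thm71} and~\ref{thm72}. So I would argue: since $K$ and $K^d$ differ by a finite sequence of crossing changes, Theorems~\ref{thm71} and~\ref{thm72} give
\[\widetilde{W}(K;t)=\widetilde{W}(K^d;t)\quad\text{and}\quad \widetilde{X}(K;t)=\widetilde{X}(K^d;t)\qquad(X\in\{F,G,H\}).\]
Then evaluate the right-hand sides at $K^d$ using the first paragraph: because $W_1(K^d;t)=0$ and $X_{01}(K^d;t)=X_{10}(K^d;t)=X_{11}(K^d;t)=0$, we get $\widetilde{W}(K^d;t)=W_0(K^d;t)$ and $\widetilde{X}(K^d;t)=X_{00}(K^d;t)$. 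Combining the two displays yields $W_0(K^d;t)=W_0(K;t)-W_1(K;t)$ and $X_{00}(K^d;t)=X_{00}(K;t)-X_{01}(K;t)-X_{10}(K;t)+X_{11}(K;t)$, which is exactly the $a=0$ (resp. $a=b=0$) case.

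I do not anticipate a serious obstacle here; the corollary is essentially a bookkeeping consequence of the crossing-change invariance already established, together with the trivial observation that a descending diagram has $I_1=\emptyset$. The only point requiring a word of care is that ``$K^d$ is obtained from $K$ by crossing changes'' should be stated as passing to a diagram of $K$, making it descending by switching the crossings of type $1$ one at a time, and noting that each such switch is a crossing change in the sense of Section~\ref{sec7}; the map $K\mapsto K^d$ being well defined is exactly the content invoked from \cite[Section 2.2]{FKM}, so I would cite that rather than reprove it.
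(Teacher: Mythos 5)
Your proposal is correct and follows essentially the same route as the paper: observe that a descending diagram has $I_1=\emptyset$, so $W_1(K^d;t)$ and all $X_{ab}(K^d;t)$ with $(a,b)\neq(0,0)$ vanish, then apply the crossing-change invariance of $\widetilde{W}$ and $\widetilde{X}$ from Theorems~\ref{thm71} and~\ref{thm72} to identify $W_0(K^d;t)$ and $X_{00}(K^d;t)$ with the stated alternating combinations for $K$. No gaps.
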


\begin{proof}
Since a descending diagram of $K^d$ 
has no real crossing of type $1$, 
$W_1(K^d;t)=X_{ab}(K^d;t)=0$ 
holds for any $a,b\in\{0,1\}$ except $a=b=0$. 
Moreover, 
since $K$ and $K^d$ are related by a finite sequence of crossing changes, 
it follows from Theorems~\ref{thm71} and \ref{thm72} that 
\[W_0(K^d;t)=W_0(K^d;t)-W_1(K^d;t)=W_0(K;t)-W_1(K;t)\]
and 
\begin{align*}
X_{00}(K^d;t)&=X_{00}(K^d;t)-X_{01}(K^d;t)-X_{10}(K^d;t)+X_{11}(K^d;t)\\
&=X_{00}(K;t)-X_{01}(K;t)-X_{10}(K;t)+X_{11}(K;t).
\end{align*}
\end{proof}

\section{Finite-type invariants under crossing changes}\label{sec8}

There are two definitions of finite-type invariants 
for long virtual knots; one due to Vassiliev \cite{Vas} (see also~\cite{Kau}) 
under crossing changes, 
and the other due to Goussarov, Polyak, and Viro \cite{GPV} 
under virtualizations. 
In this section, we study finite-type invariants under crossing changes, 
and prove that 
the $0$- and $1$-writhe polynomials are finite-type invariants 
of degree one, and the intersection polynomials are 
of degree two. 
It is known that the same property also holds for the writhe polynomial 
and for the intersection polynomials of (closed) virtual knots~\cite{HNNS4}. 

We recall the definition of finite-type invariants 
under crossing changes. 
A {\it $k$-marked} long virtual knot diagram $(D,C_k)$ 
is a pair of a diagram $D$ and a set of 
$k$ specified real crossings $C_k=\{c_1,\dots,c_k\}$ of $D$. 
For $k$ integers $\de_1,\dots,\de_k\in\{0,1\}$, 
we denote by $D_{\de_1,\dots,\de_k}$ 
the long virtual knot diagram obtained from $(D,C_k)$ 
by modifying each $c_i\in C_k$ according to $\de_i\in\{0,1\}$ 
with the rule shown in Figure~\ref{fig:doublept}, 
where we mark the real crossings belonging to $C_k$ 
with an astarisk $*$. 
Let $K$ be the long virtual knot presented by $D$. 
We denote by $K_{\de_1,\dots,\de_k}$ the long virtual knot 
presented by $D_{\de_1,\dots,\de_k}$.

\begin{figure}[htbp]
  \centering
    \begin{overpic}[]{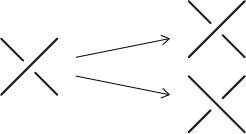}
      \put(12,15){$c_{i}$}
      \put(12,38){$*$}
      \put(44,48){$\de_{i}=0$}
      \put(44,8){$\de_{i}=1$}
    \end{overpic}
  \caption{Performing a crossing change for $\delta_i=1$}
  \label{fig:doublept}
\end{figure}

Let $v$ be an invariant of long virtual knots 
taking values in an abelian group. 
Such an invariant is a finite-type invariant 
of degree $k$ under crossing changes if and only if 
\[v(D,C_{k+1})=\sum_{\de_1,\dots,\de_{k+1} \in\{0,1\}}
(-1)^{\de_1+\dots+\de_{k+1}} v(K_{\de_1,\dots,\de_{k+1}})=0\]
holds for any $(D,C_{k+1})$, 
and there is $(D',C_k)$ with $v(D',C_k)\ne 0$ 
(cf.~\cite{Kau,Vas}).

\begin{theorem}\label{thm81}
For any $a\in \{0,1\}$, 
the $a$-writhe polynomial 
$W_a(K;t)$ is a finite-type invariant of degree one 
under crossing changes. 
\end{theorem}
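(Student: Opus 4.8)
The plan is to prove the two required properties directly: that the order-two alternating sum of $W_a$ over a $2$-marked diagram vanishes, and that $W_a$ is not identically zero (so the degree is exactly one).

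First I would establish the vanishing condition. Fix a $2$-marked diagram $(D,C_2)$ with $C_2=\{c_1,c_2\}$ and form the four diagrams $D_{\de_1\de_2}$ for $\de_1,\de_2\in\{0,1\}$. The key observation is that changing a single marked crossing $c_i$ affects the writhe polynomial in a controlled way: only the term contributed by $c_i$ itself changes, because the cycles $\alpha_j,\beta_j$ and the sign $\e_j$ for $j\neq i$ are unchanged by a crossing change at $c_i$, and whether $j$ lies in $I_0$ or $I_1$ is likewise unchanged. Thus $W_a(K_{\de_1\de_2};t)$ decomposes as a ``background'' sum over the unmarked crossings (independent of $\de_1,\de_2$) plus a contribution from $c_1$ depending only on $\de_1$ and a contribution from $c_2$ depending only on $\de_2$. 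Writing $W_a(K_{\de_1\de_2};t)=B_a(t)+u_a(\de_1;t)+u_a(\de_2;t)$ for suitable functions, the alternating sum
\[
\sum_{\de_1,\de_2\in\{0,1\}}(-1)^{\de_1+\de_2}W_a(K_{\de_1\de_2};t)
\]
telescopes to zero because each of the three pieces is a sum of terms depending on at most one of $\de_1,\de_2$, and $\sum_{\de_1,\de_2}(-1)^{\de_1+\de_2}=0$ while $\sum_{\de_1,\de_2}(-1)^{\de_1+\de_2}u_a(\de_1;t)=\big(\sum_{\de_1}(-1)^{\de_1}u_a(\de_1;t)\big)\big(\sum_{\de_2}(-1)^{\de_2}\big)=0$, and similarly for the $\de_2$-piece.

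One subtlety to handle carefully is the ``type'' of a marked crossing: when $\de_i$ switches the over/under information, $c_i$ may move between $I_0$ and $I_1$, and its sign may flip. But this is exactly what is already accounted for in the crossing-change analysis used for Theorem~\ref{thm71}; the point is only that this reassignment depends on $\de_i$ alone and not on $\de_j$, so the additive decomposition above still holds. I would state this cleanly, perhaps isolating it as the remark that $W_a(K_{\de_1\de_2};t)-W_a(K_{\de_1'\de_2};t)$ depends only on $\de_1,\de_1'$ (and the fixed local data at $c_1$), not on $\de_2$.

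Finally, for the nontriviality I would exhibit a single example. The long virtual knots $K(n)$ of Example~\ref{ex41} (or already the simple surface realization $(\Sigma_1,D)$ in Figure~\ref{fig:cycles}, for which $W_0(K;t)=t-2+t^{-1}\neq 0$) show that $W_0$ is not the zero invariant, and the mirror-type construction or Example~\ref{ex42} with $W_1(K'(n);t)=n(t-1)\neq 0$ handles $W_1$; alternatively, Theorem~\ref{thm51} relates $W_1$ to $W_0$ under symmetry, so a nonzero value of one yields a nonzero value of the other. Hence each $W_a$ has degree exactly one. The main obstacle is not any hard computation but making the additivity-under-independent-crossing-changes argument precise enough that the telescoping is transparent; once that structural fact is in hand, the rest is immediate.
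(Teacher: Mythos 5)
Your treatment of the vanishing condition is correct and is essentially the paper's own argument: you decompose $W_a(K_{\delta_1,\delta_2};t)$ crossing by crossing, observe that the contribution of an unmarked crossing is independent of $(\delta_1,\delta_2)$ while the contribution of the marked crossing $c_i$ (its sign, its type, and $\alpha_i\cdot\beta_i$) depends only on $\delta_i$, and conclude that the double alternating sum vanishes term by term. This is exactly the exchange-of-summation argument with $\Phi_i^{\delta_1,\delta_2}$ in the paper, so no issue there.

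The gap is in your last step. With the definition used in this paper, proving that the degree is exactly one requires exhibiting a $1$-marked diagram $(D',C_1)$ with $W_a(D',C_1)=W_a(K_0;t)-W_a(K_1;t)\neq 0$, i.e.\ showing that $W_a$ actually changes under some single crossing change. Showing that $W_a$ is not the zero invariant does not accomplish this: a nonzero invariant can have all first-order alternating sums equal to zero, in which case it is of degree zero (invariant under crossing changes), and crossing changes do not trivialize long virtual knots, so such invariants are far from constant. Indeed $\widetilde{W}(K;t)=W_0(K;t)-W_1(K;t)$ of Theorem~\ref{thm71} is precisely such an invariant: it is nonzero (for instance on the knot of Figure~\ref{fig:cycles}) yet unchanged by every crossing change. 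Hence your deduction ``$W_0(K;t)=t-2+t^{-1}\neq 0$ and $W_1(K'(n);t)=n(t-1)\neq 0$, therefore each $W_a$ has degree exactly one'' is not valid as stated. The paper closes this point with a concrete $1$-marked diagram (Figure~\ref{fig:pf-order1}) for which $W_a(D_0;t)=t-1$ while $D_1$ presents the trivial long knot, so the first-order alternating sum is $t-1\neq 0$. Your own examples can be repaired in the same spirit: marking one crossing of the two-crossing diagram of Figure~\ref{fig:cycles}, the crossing change turns that type-$0$ crossing into a type-$1$ crossing of opposite sign without altering the cycles, so $W_0$ changes from $t-2+t^{-1}$ to $t-1$ and $W_1$ from $0$ to $1-t^{-1}$, giving nonzero first-order sums for both $a$. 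What is missing from your write-up is this witness of non-invariance under a single crossing change, not any further structural argument.
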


\begin{proof}
Let $(D,C_{2})$ be a $2$-marked long virtual knot diagram 
with $C_{2}=\{c_{1},c_{2}\}$, 
and $c_3,\ldots,c_n$ the non-marked real crossings of $D$.
For $1\leq i\leq n$ and $\de_1,\de_2\in\{0,1\}$, 
we put \[\Phi_i^{\de_1,\de_2}(t)=
\begin{cases}
\e_i(t^{\alpha_i\cdot\beta_i}-1) & \mbox{for }i\in I_a(D_{\de_1,\de_2}), \\
0 & \mbox{for }i \not\in I_a(D_{\de_1,\de_2}). 
\end{cases}\] 
Here, the sign $\e_i$ of $c_i$ 
and the intersection number $\alpha_i\cdot\beta_i$ 
are taken in $D_{\de_1,\de_2}$. 
Then it holds that 
\[\sum_{\de_1,\de_2\in\{0,1\}}(-1)^{\de_1+\de_2}
\left(\sum_{i\in I_a(D_{\de_1,\de_2})}
\e_i(t^{\alpha_i\cdot\beta_i}-1)\right)
=\sum_{i=1}^n
\left(\sum_{\de_1,\de_2\in\{0,1\}}(-1)^{\de_1+\de_2}
\Phi_i^{\de_1,\de_2}(t)\right).\]
We prove that the second sum in the right hand side 
of the above equation is equal to zero for each $i$ 
with $1\leq i\leq n$. 

For $i=1$, 
it follows by definition that 
the type and sign of $c_1$ in $D_{\de_1,\de_2}$ 
and the intersection number $\alpha_1\cdot\beta_1$ 
are independent of the choice of $\delta_2\in\{0,1\}$. 
Hence, $\Phi_1^{\de_1,\de_2}(t)$ is 
also independent of $\de_2$; 
that is, $\Phi_1^{\de_1,0}(t)=\Phi_{1}^{\de_{1,1}}(t)$. 
Therefore, we have 
\[\sum_{\de_1,\de_2\in\{0,1\}}(-1)^{\de_1+\de_2}
\Phi_1^{\de_1,\de_2}(t)=
\sum_{\de_1\in\{0,1\}}
\left((-1)^{\de_1}\Phi_1^{\de_1,0}(t)+
(-1)^{\de_1+1}\Phi_1^{\de_1,1}(t)\right)=0.\]
Similarly, the second sum for $i=2$ is also equal to zero. 

For each $i$ with $3\leq i\leq n$, 
the type and sign of $c_i$ and the intersection number 
$\alpha_i\cdot\beta_i$ in $D_{\de_1,\de_2}$ 
are independent of both $\de_1$ and $\de_2$. 
Hence, $\Phi_i^{\de_1,\de_2}(t)$ is also independent of $\de_1$ and $\de_2$, 
and the second sum for such $i$ is again equal to zero. 

On the other hand, we consider a $1$-marked long virtual knot 
diagram $(D,C_{1})$ with $C_{1}=\{c_{1}\}$ 
as shown on the left of Figure~\ref{fig:pf-order1}. 
Then $W_a(D_0;t)=t-1$ holds for $a\in\{0,1\}$. 
Furthermore, since $D_1$ presents the trivial long virtual knot, 
$W_a(D_1;t)=0$ holds for $a\in\{0,1\}$. 
See the middle and right of the figure. 
Therefore, we have 
\[W_a(D_0;t)-W_a(D_1;t)=t-1\neq 0,\] 
which shows that $W_a(K;t)$ is a finite-type invariant 
of degree one under crossing changes. 
\end{proof}

\begin{figure}[htbp]
  \centering
    \begin{overpic}[]{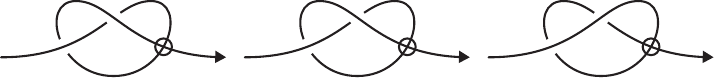}
      \put(51,37){$c_{1}$}
      \put(52.5,19){$*$}
      \put(40,-12){$(D,C_1)$}
      \put(168,-12){$D_0$}
      \put(284,-12){$D_1$}
    \end{overpic}
  \vspace{1em}
  \caption{$(D,C_1)$ with $D_0$ and $D_1$}
  \label{fig:pf-order1}
\end{figure}

\begin{theorem}\label{thm82}
For any $X\in \{F,G,H\}$ and $a,b\in \{0,1\}$, 
the intersection polynomial 
$X_{ab}(K;t)$ is a finite-type invariant of degree two 
under crossing changes. 
\end{theorem}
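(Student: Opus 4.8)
The plan is to mimic the structure of the proof of Theorem~\ref{thm81}, but now applied to a $3$-marked diagram, and to reduce the vanishing of the alternating sum to a crossing-by-crossing argument. Fix a $3$-marked long virtual knot diagram $(D,C_3)$ with $C_3=\{c_1,c_2,c_3\}$ and let $c_4,\dots,c_n$ be the non-marked real crossings. By Remark~\ref{rem34} we may assume $D$ is untwisted, and since untwistedness is unaffected by the crossing changes recorded by $C_3$ (each switch changes the type of one crossing without changing the total writhe $\omega_0+\omega_1$; one checks $\omega_0(D_{\de_1,\de_2,\de_3})$ and $\omega_1(D_{\de_1,\de_2,\de_3})$ are each independent of the $\de$'s), all diagrams $D_{\de_1,\de_2,\de_3}$ are untwisted, so $X_{ab}=x_{ab}$ on the nose for $x\in\{f,g,h\}$. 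Thus it suffices to prove
\[
\sum_{\de_1,\de_2,\de_3\in\{0,1\}}(-1)^{\de_1+\de_2+\de_3}\,x_{ab}(D_{\de_1,\de_2,\de_3};t)=0
\]
for each $x\in\{f,g,h\}$ and each $a,b$.

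First I would expand $x_{ab}$ as a double sum over pairs of real crossings and split off the contribution $\Psi_{ij}^{\de_1,\de_2,\de_3}(t)$ of the ordered pair $(c_i,c_j)$, where $\Psi_{ij}$ equals $\e_i\e_j(t^{\gamma_i\cdot\gamma'_j}-1)$ if $i\in I_a$ and $j\in I_b$ in $D_{\de_1,\de_2,\de_3}$ (with $\gamma,\gamma'$ the appropriate $\alpha/\beta$ cycles) and $0$ otherwise. The key observation is that the sign $\e_i$, the type of $c_i$ (hence membership in $I_a$), and every intersection number $\alpha_i\cdot\alpha_j$, $\alpha_i\cdot\beta_j$, $\beta_i\cdot\beta_j$ in $D_{\de_1,\de_2,\de_3}$ depend only on which of $\de_1,\de_2,\de_3$ correspond to the crossings actually involved in the pair $(c_i,c_j)$ — because a crossing change at $c_\ell$ reverses $\e_\ell$ and toggles the type of $c_\ell$ but leaves all cycles $\alpha_m,\beta_m$ unchanged as homology classes (smoothing a real crossing does not see its over/under data), and leaves $\e_m$ and the type of $c_m$ unchanged for $m\ne\ell$. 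Consequently $\Psi_{ij}^{\de_1,\de_2,\de_3}$ is independent of any $\de_\ell$ for which $c_\ell\notin\{c_i,c_j\}$. Since a pair $(c_i,c_j)$ involves at most two of the three marked crossings, there is always at least one index $\ell\in\{1,2,3\}$ on which $\Psi_{ij}$ does not depend; summing $(-1)^{\de_\ell}$ over $\de_\ell\in\{0,1\}$ with the other two $\de$'s fixed then kills that pair's contribution, exactly as in the $i=1$ case of Theorem~\ref{thm81}. Doing this for all pairs gives the vanishing of the degree-three alternating sum.

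It remains to exhibit, for each $X$ and each $a,b$, a $2$-marked diagram $(D',C_2)$ with $X(D',C_2)\neq 0$, i.e.\ a long virtual knot whose intersection polynomial $X_{ab}$ is nonzero — or, in the cases where some $X_{ab}$ happens to vanish identically, a nonzero second difference realized across the four resolutions $D'_{\de_1,\de_2}$. For the nonvanishing of the invariant itself, the families $K(n)$ and $K'(n)$ from Examples~\ref{ex41} and~\ref{ex42} already supply long virtual knots with $F_{00},G_{00},H_{00}$ and $F_{01},G_{01},G_{10},H_{00},H_{01},H_{11}$ nonzero (and the reciprocal/reflection symmetries of Lemma~\ref{lem23} and Theorem~\ref{thm52} then cover $F_{10},H_{10}$, etc.); what must be done is to present such a $K$ by a diagram in which two real crossings, when taken as the marked set $C_2$, produce a nonzero alternating sum — for instance by choosing $(D',C_2)$ so that $D'_{0,0}$ presents $K$ while $D'_{1,1}$, $D'_{0,1}$, $D'_{1,0}$ present long virtual knots whose $X_{ab}$ values do not cancel. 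A clean way is to mark a $2$-crossing "clasp" appended to a diagram realizing $K(2)$ or $K'(2)$, compute the four resolutions using Table~\ref{int} (concatenation) together with the product formulae of Theorem~\ref{thm62}, and read off a nonzero value.

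The routine expansions in the second paragraph are genuinely routine once the "locality" observation is isolated; the main obstacle is the realization step of the last paragraph — finding a single explicit $2$-marked diagram that simultaneously (or case by case) certifies degree exactly two for all twelve invariants $X_{ab}$. I expect this to require a small menu of examples rather than one universal gadget, organized using the symmetry reductions $X_{01}(K;t)=X_{10}(K;t^{-1})$, $X_{ab}(K^*;t)=X_{ab}(K;t^{-1})$, and $X_{ab}(K^\#;t)=X_{a'b'}(K;t)$ to cut the number of cases from twelve down to a handful.
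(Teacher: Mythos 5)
Your locality argument for the third alternating sum is essentially the paper's Lemma~\ref{lem83}, and it is fine as far as it goes --- but it only computes the third difference of $f_{ab}$, $g_{ab}$, $h_{ab}$, not of $F_{ab}$, $G_{ab}$, $H_{ab}$. The reduction you use to identify the two, namely that all eight resolutions $D_{\de_1,\de_2,\de_3}$ stay untwisted, is false: a crossing change at $c_\ell$ flips the sign $\e_\ell$ \emph{and} toggles the type of $c_\ell$, so $\omega_0$ and $\omega_1$ each change by $\mp\e_\ell$ (and the total writhe $\omega_0+\omega_1$ changes by $\mp 2\e_\ell$); only the difference $\omega_0-\omega_1$ is preserved, which is exactly what Section~\ref{sec7} exploits. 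Hence even if $D$ is untwisted, $D_{\de_1,\de_2,\de_3}$ generally is not, and $X_{ab}(K_{\de_1,\de_2,\de_3};t)\ne x_{ab}(D_{\de_1,\de_2,\de_3};t)$ for $X\in\{G,H\}$. Your argument therefore settles the case $X=F$ (where no correction term appears) but leaves a genuine gap for $G_{ab}$ and $H_{ab}$: you must also show that the third alternating sum of the correction terms $\omega_a(D_{\de_1,\de_2,\de_3})W_b(K_{\de_1,\de_2,\de_3};t)$ (and its companions) vanishes. The paper does this in Lemma~\ref{lem84} by writing $\omega_a(D_{\de_1,\de_2,\de_3})=\sum_i\e_i^{\de_1,\de_2,\de_3}$, noting that each $\e_i^{\de_1,\de_2,\de_3}$ depends on at most one of the $\de_\ell$, and then invoking the degree-one property of $W_b$ (Theorem~\ref{thm81}) to kill the remaining double alternating sum; some such argument is indispensable and is absent from your proposal.

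The second half of your plan is also not carried out. Exhibiting knots with $X_{ab}\ne 0$ does not by itself rule out degree $\le 1$; as you partly acknowledge, what is required is an explicit $2$-marked diagram whose second alternating sum is nonzero, and your proposal stops at a sketch of how one might be assembled from Examples~\ref{ex41}--\ref{ex42}, the product formulae, and the symmetries. The paper needs none of that machinery: a single small $2$-marked diagram (Figure~\ref{fig:pf-order2}) gives second difference $t-2+t^{-1}$ for every $F_{ab}$ and $-t+2-t^{-1}$ for every $G_{ab}$ and $H_{ab}$, certifying degree exactly two for all twelve invariants at once. So the correct completion of your argument is: keep your pairwise locality computation for $f,g,h$, add the analogue of Lemma~\ref{lem84} for the writhe corrections, and replace the realization discussion by one explicit computation on a two-crossing clasp-type diagram.
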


To prove Theorem~\ref{thm82},  
we prepare Lemmas~\ref{lem83} and~\ref{lem84} stated below. 

\begin{lemma}\label{lem83}
Let $(D,C_3)$ be a $3$-marked long virtual knot diagram 
with $C_3=\{c_1,c_2,c_3\}$. 
For any $x\in \{f,g,h\}$ and $a,b\in \{0,1\}$, 
we have 
\[
\sum_{\de_1,\de_2,\de_3\in \{0,1\}}
(-1)^{\de_1+\de_2+\de_3}x_{ab}(D_{\de_1,\de_2,\de_3};t)=0.
\]
\end{lemma}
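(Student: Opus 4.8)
The plan is to follow the strategy of the proof of Theorem~\ref{thm81}: expand $x_{ab}$ as a double sum over ordered pairs of real crossings, interchange the order of summation, and check that the contribution of each pair vanishes under the alternating sum over $\de=(\de_1,\de_2,\de_3)\in\{0,1\}^3$. Concretely, writing $C_3=\{c_1,c_2,c_3\}$ and $c_4,\dots,c_n$ for the non-marked real crossings of $D$, I would set, for $1\le i,j\le n$,
\[
\Psi_{ij}^{\de}(t)=
\begin{cases}
\e_i\e_j\bigl(t^{\langle i,j\rangle}-1\bigr)
& \text{if } i\in I_a(D_{\de}) \text{ and } j\in I_b(D_{\de}),\\
0 & \text{otherwise},
\end{cases}
\]
where $\langle i,j\rangle$ stands for $\alpha_i\cdot\alpha_j$, $\alpha_i\cdot\beta_j$, or $\beta_i\cdot\beta_j$ according to whether $x=f$, $g$, or $h$, and the signs $\e_i,\e_j$ together with the index sets are read off from the diagram $D_{\de}=D_{\de_1,\de_2,\de_3}$. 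Then $x_{ab}(D_{\de};t)=\sum_{1\le i,j\le n}\Psi_{ij}^{\de}(t)$, so after swapping sums it suffices to prove $\sum_{\de\in\{0,1\}^3}(-1)^{\de_1+\de_2+\de_3}\Psi_{ij}^{\de}(t)=0$ for each fixed pair $(i,j)$.

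The core observation is that $\Psi_{ij}^{\de}(t)$ depends on at most two of the three coordinates of $\de$. A crossing change alters neither the underlying flat diagram nor the surface realization of $D$, so the smoothed cycles $\alpha_k,\beta_k$ --- and hence every intersection number $\langle i,j\rangle$ --- are unaffected by $\de$; meanwhile the sign $\e_k$ and the condition ``$k\in I_a$'' are governed entirely by the over/under state at the single crossing $c_k$, which for a marked crossing ($k\le 3$) depends only on $\de_k$ and for an unmarked crossing ($k\ge 4$) depends on none of the $\de_m$. Hence $\Psi_{ij}^{\de}(t)$ is a function of $\de_i$ and $\de_j$ alone --- vacuously so when an index exceeds $3$, which in particular covers $i=j$ --- and I would choose an index $\ell\in\{1,2,3\}\setminus\{i,j\}$, which is nonempty since $\{i,j\}$ has at most two elements.

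Summing first over $\de_\ell$ then yields
\[
\sum_{\de\in\{0,1\}^3}(-1)^{\de_1+\de_2+\de_3}\Psi_{ij}^{\de}(t)
=\Bigl(\sum_{\de_\ell\in\{0,1\}}(-1)^{\de_\ell}\Bigr)\sum_{\substack{\de_m\in\{0,1\}\\ m\in\{1,2,3\}\setminus\{\ell\}}}(-1)^{\sum_{m\ne\ell}\de_m}\,\Psi_{ij}^{\de}(t)=0,
\]
since $\sum_{\de_\ell\in\{0,1\}}(-1)^{\de_\ell}=0$, and summing over all pairs $(i,j)$ with $1\le i,j\le n$ completes the proof. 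I do not anticipate a genuine obstacle, as the lemma is the bilinear counterpart of Theorem~\ref{thm81}; the only point that really must be pinned down is the invariance of the cycles $\alpha_k,\beta_k$, and therefore of all intersection numbers, under crossing changes. This I would justify by recalling that the smoothing operation at $c_k$ and the passage to a surface realization use only the orientation and the virtual structure of the diagram, never the over/under information at any real crossing.
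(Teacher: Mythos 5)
Your proposal is correct and follows essentially the same route as the paper's proof: expand $x_{ab}(D_{\de};t)$ as a double sum over crossing pairs, swap the order of summation, and kill each pair's contribution by noting that $\Psi_{ij}^{\de}(t)$ is independent of some coordinate $\de_\ell$ with $\ell\in\{1,2,3\}\setminus\{i,j\}$, since signs, types, and the (crossing-change-invariant) intersection numbers depend only on $\de_i$ and $\de_j$. The only cosmetic caveat is that your factored display should be read with $\Psi_{ij}^{\de}$ evaluated at a fixed value of $\de_\ell$ (legitimate precisely because of the independence you establish), which is exactly how the paper pairs the $\de_\ell=0$ and $\de_\ell=1$ terms.
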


\begin{proof}
We prove only the case $x=f$; 
the other cases can be proved similarly. 
Let $c_4,\dots,c_n$ be the non-marked real crossings of $D$. 
For $1\leq i,j\leq n$ and $\de_1,\de_2,\de_3\in\{0,1\}$, 
we put \[\Psi_{i,j}^{\de_1,\de_2,\de_3}(t)=
\begin{cases}
\e_i\e_j(t^{\alpha_i\cdot\alpha_j}-1) & 
\mbox{for }i\in I_a(D_{\de_1,\de_2,\de_3})\mbox{ and }
j\in I_b(D_{\de_1,\de_2,\de_3}), \\
0 & \mbox{otherwise}. 
\end{cases}\] 
Then it holds that 
\begin{align*}
&\sum_{\de_1,\de_2,\de_3\in\{0,1\}}(-1)^{\de_1+\de_2+\de_3}
\left(\sum_{\substack{i\in I_a(D_{\de_1,\de_2,\de_3})\\ j\in I_b(D_{\de_1,\de_2,\de_3})}}
\e_i\e_j(t^{\alpha_i\cdot\alpha_j}-1)\right)\\
&=\sum_{1\leq i,j\leq n}
\left(\sum_{\de_1,\de_2,\de_3\in\{0,1\}}(-1)^{\de_1+\de_2+\de_3}
\Psi_{i,j}^{\de_1,\de_2,\de_3}(t)\right).
\end{align*}
It suffices to prove that the second sum in the right hand side 
of the above equation is equal to zero for each $(i,j)$ 
with $1\leq i,j\leq n$. 
The proof is similar to that of Theorem~\ref{thm81}. 
In fact, we have the following. 
\begin{itemize}
\item
For $(i,j)$ with $1\leq i,j\leq 3$, 
$\Psi_{i,j}^{\de_1,\de_2,\de_3}(t)$ 
is independent of $\de_\ell$, where $\ell\in\{1,2,3\}\setminus\{i,j\}$. 
\item
For $(i,j)$ with $1\leq i\leq 3$ and $4\leq j\leq n$, 
$\Psi_{i,j}^{\de_1,\de_2,\de_3}(t)$ is independent of $\de_\ell$, 
where $\ell\in\{1,2,3\}\setminus\{i\}$. 
\item
For $(i,j)$ with $4\leq i\leq n$ and $1\leq j\leq 3$, 
$\Psi_{i,j}^{\de_1,\de_2,\de_3}(t)$ is independent of $\de_\ell$, 
where $\ell\in\{1,2,3\}\setminus\{j\}$. 
\item
For each $(i,j)$ with $4\leq i,j\leq n$, 
$\Psi_{i,j}^{\de_1,\de_2,\de_3}(t)$ 
is independent of $\de_1,\de_2,\de_3$. 
\end{itemize}
Therefore, the second sum for each $(i,j)$ is equal to zero. 
\end{proof}

\begin{lemma}\label{lem84}
Let $(D,C_3)$ be a $3$-marked long virtual knot diagram 
with $C_3=\{c_1,c_2,c_3\}$. 
For any $a,b\in \{0,1\}$, we have 
\[\sum_{\de_1,\de_2,\de_3\in \{0,1\}}(-1)^{\de_1+\de_2+\de_3}
\omega_a(D_{\de_1,\de_2,\de_3})W_{b}(K_{\de_1,\de_2,\de_3};t)=0.\]
\end{lemma}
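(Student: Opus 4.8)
The plan is to reduce the triple alternating sum to a sum over individual real crossings, exactly as in the proofs of Theorem~\ref{thm81} and Lemma~\ref{lem83}, and to exploit the product structure of the summand $\omega_a W_b$. Write $\omega_a(D_{\de_1,\de_2,\de_3})=\sum_{i=1}^n \phi_i^{\de_1,\de_2,\de_3}$, where $\phi_i^{\de_1,\de_2,\de_3}$ equals $\e_i$ if $c_i$ is of type $a$ in $D_{\de_1,\de_2,\de_3}$ and $0$ otherwise, and similarly $W_b(K_{\de_1,\de_2,\de_3};t)=\sum_{j=1}^n \psi_j^{\de_1,\de_2,\de_3}(t)$, where $\psi_j^{\de_1,\de_2,\de_3}(t)=\e_j(t^{\alpha_j\cdot\beta_j}-1)$ if $c_j$ is of type $b$ in $D_{\de_1,\de_2,\de_3}$ and $0$ otherwise; here all signs and intersection numbers are computed in $D_{\de_1,\de_2,\de_3}$. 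Then the product $\omega_a W_b$ expands as $\sum_{1\le i,j\le n} \phi_i^{\de_1,\de_2,\de_3}\psi_j^{\de_1,\de_2,\de_3}(t)$, and the whole expression becomes
\[
\sum_{1\le i,j\le n}\left(\sum_{\de_1,\de_2,\de_3\in\{0,1\}}(-1)^{\de_1+\de_2+\de_3}\phi_i^{\de_1,\de_2,\de_3}\psi_j^{\de_1,\de_2,\de_3}(t)\right).
\]
It suffices to show the inner sum vanishes for each ordered pair $(i,j)$.

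For this I would use the same independence observations as in Lemma~\ref{lem83}: modifying a marked crossing $c_\ell$ affects the type, sign, and the relevant intersection numbers only of $c_\ell$ itself, not of any other crossing. Concretely, the product $\phi_i^{\de_1,\de_2,\de_3}\psi_j^{\de_1,\de_2,\de_3}(t)$ depends on $\de_\ell$ only when $\ell\in\{i,j\}$. Hence for any pair $(i,j)$ with $\{1,2,3\}\not\subseteq\{i,j\}$ there is a marked index $\ell\in\{1,2,3\}\setminus\{i,j\}$ on which the summand does not depend, so pairing the $\de_\ell=0$ and $\de_\ell=1$ terms gives cancellation and the inner sum is zero. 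Since $\{i,j\}$ has at most two elements while $\{1,2,3\}$ has three, the condition $\{1,2,3\}\subseteq\{i,j\}$ is never met, so every inner sum vanishes and the claim follows.

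I expect the only real point requiring care — rather than a genuine obstacle — to be the bookkeeping of how a crossing change at $c_\ell$ alters $\phi_i$ and $\psi_j$ when $\ell\in\{i,j\}$: a crossing change flips the type of $c_\ell$ (from $a$ to $a'$) and negates its sign and the sign of the intersection number $\alpha_\ell\cdot\beta_\ell$, so $\phi_\ell$ and $\psi_\ell$ genuinely do change. But this is precisely why we only need $\de_\ell$-independence for some $\ell\notin\{i,j\}$, and the pigeonhole count above guarantees such an $\ell$ always exists. One should also note that $\psi_j^{\de_1,\de_2,\de_3}(t)$ may pick up the factor $t^{\alpha_j\cdot\beta_j}$ computed in the modified diagram, but since for $j\notin\{1,2,3\}$ the cycles $\alpha_j,\beta_j$ are unchanged, and for $j\in\{1,2,3\}$ we only invoke independence in the complementary marked index, no difficulty arises. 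Thus the proof is a direct adaptation of the techniques already established for Theorem~\ref{thm81} and Lemma~\ref{lem83}.
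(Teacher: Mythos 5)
Your argument is correct, but it takes a different route from the paper's. The paper expands only $\omega_a(D_{\delta_1,\delta_2,\delta_3})$ as a sum of crossing-wise contributions $\varepsilon_i^{\delta_1,\delta_2,\delta_3}$ and keeps $W_b(K_{\delta_1,\delta_2,\delta_3};t)$ intact: for $i\in\{1,2,3\}$ it factors out the single $\delta_i$-dependence and then kills the remaining alternating sum of $W_b$ over the other two markings by invoking Theorem~\ref{thm81} (the $b$-writhe polynomial is finite-type of degree one under crossing changes), and for $i\geq 4$ it kills the full triple alternating sum of $W_b$ the same way. You instead expand \emph{both} factors crossing-by-crossing, obtaining terms $\phi_i\psi_j$ each of which depends on at most the two variables $\delta_i,\delta_j$, and finish by the pigeonhole pairing in the unused marked index, exactly in the style of Lemma~\ref{lem83}. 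Your version is self-contained (it does not use Theorem~\ref{thm81}) and uniform with Lemma~\ref{lem83}; the paper's version is shorter given Theorem~\ref{thm81} and highlights the structural point that the correction term is a product of a ``degree $\leq 1$'' quantity with a degree-one invariant. The independence claims you need do hold: the type and sign of $c_j$ in $D_{\delta_1,\delta_2,\delta_3}$ depend only on $\delta_j$, and the cycles $\alpha_j,\beta_j$ depend only on the underlying flat diagram, which crossing changes do not alter.

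One small correction to your final remark, harmless to the proof since you never use it: a crossing change at $c_\ell$ flips the type and negates the sign $\varepsilon_\ell$, but it does \emph{not} negate $\alpha_\ell\cdot\beta_\ell$. The oriented smoothing, and hence the cycles $\alpha_\ell$ and $\beta_\ell$ and their intersection number, are determined by the underlying curve and the basepoint alone, independently of the over/under information at $c_\ell$. (By contrast, it is a virtualization, not a crossing change, that genuinely alters these intersection data.)
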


\begin{proof}
Let $c_4,\dots,c_n$ be the non-marked real crossings of $D$. 
For $i$ with $1\leq i\leq n$, we put  
\[\e_i^{\de_1,\de_2,\de_3}=
\begin{cases}
\e_i & \mbox{for }i\in I_a(D_{\de_1,\de_2,\de_3}), \\
0 & \mbox{for }i \not\in I_a(D_{\de_1,\de_2,\de_3}). 
\end{cases}\] 
By $\omega_a(D_{\de_1,\de_2,\de_3})=
\sum_{i=1}^n \e_i^{\de_1,\de_2,\de_3}$, 
it holds that 
\begin{align*}
&\sum_{\de_1,\de_2,\de_3\in \{0,1\}}(-1)^{\de_1+\de_2+\de_3}
\omega_a(D_{\de_1,\de_2,\de_3})W_{b}(K_{\de_1,\de_2,\de_3};t)\\
&=\sum_{i=1}^n\left(
\sum_{\de_1,\de_2,\de_3\in\{0,1\}}(-1)^{\de_1+\de_2+\de_3}
\e_i^{\de_1,\de_2,\de_3}W_b(K_{\de_1,\de_2,\de_3};t)\right).
\end{align*}
We prove that the second sum in the right hand side of the above equation 
is equal to zero for each $i$ with $1\leq i\leq n$. 

For $i=1$, since $\e_1^{\de_1,\de_2,\de_3}$ is 
independent of both $\de_2$ and $\de_3$, 
Theorem~\ref{thm81} induces that the second sum is equal to 
\[\sum_{\de_1\in\{0,1\}}(-1)^{\de_1}\e_1^{\de_1,0,0}
\left(\sum_{\de_2,\de_3\in\{0,1\}}(-1)^{\de_2+\de_3}
W_b(K_{\de_1,\de_2,\de_3};t)\right)=0.\]
Similarly, the second sums for $i=2$ and $3$ are 
also equal to zero. 

For each $i$ with $4\leq i\leq n$, 
$\e_i^{\de_1,\de_2,\de_3}$ is 
independent of $\de_1$, $\de_2$, and $\de_3$. 
Hence, the second sum for such $i$ is again 
equal to zero. 
\end{proof}

\begin{proof}[Proof of {\rm Theorem~\ref{thm82}}.]
By Lemmas~\ref{lem83} and \ref{lem84}, 
we have 
\[
\sum_{\de_{1},\de_{2},\de_{3}\in\{\pm1\}}
(-1)^{\de_1+\de_2+\de_3}X_{ab}(K_{\de_{1},\de_{2},\de_{3}};t)=0
\]
for any $X\in\{F,G,H\}$ and $a,b\in\{0,1\}$.

On the other hand, 
we consider a $2$-marked diagram $(D,C_2)$ 
with $C_{2}=\{c_{1},c_{2}\}$ 
as shown on the top left of Figure~\ref{fig:pf-order2}. 
Then it holds that 
\[\sum_{\de_1,\de_2\in\{0,1\}}(-1)^{\de_1+\de_2}F_{ab}(D_{\de_1,\de_2};t)
=t-2+t^{-1}\ne 0\]
and 
\begin{align*}
\sum_{\de_1,\de_2\in\{0,1\}}(-1)^{\de_1+\de_2}G_{ab}(D_{\de_1,\de_2};t)
&=\sum_{\de_1,\de_2\in\{0,1\}}(-1)^{\de_1+\de_2}H_{ab}(D_{\de_1,\de_2};t)\\
&=-t+2-t^{-1}\ne 0,
\end{align*}
which shows that $X_{ab}(K;t)$ is a finite-type invariant of degree two 
under crossing changes. 
\end{proof}

\begin{figure}[t]
  \centering
    \begin{overpic}[]{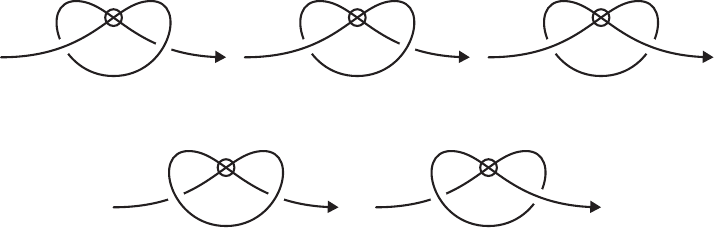}
      \put(19,89){$*$}
      \put(84,89){$*$}
      \put(20,75){$c_{1}$}
      \put(80,75){$c_{2}$}
      \put(40,60){$(D,C_2)$}
      \put(163,60){$D_{0,0}$}
      \put(282,60){$D_{0,1}$}
      \put(99,-12){$D_{1,0}$}
      \put(226,-12){$D_{1,1}$}
    \end{overpic}
  \vspace{1em}
  \caption{$(D,C_2)$ with $D_{0,0}$, $D_{0,1}$, $D_{1,0}$, and $D_{1,1}$}
  \label{fig:pf-order2}
\end{figure}

\section{Finite-type invariants under virtualizations}\label{sec9}

In this section, 
we study finite-type invariants under virtualizations. 
The definition is quite similar to that under crossing changes, 
and the only difference lies in whether we use 
virtualizations or crossing changes (cf.~\cite{GPV}). 

Let $(D,C_k)$ be a $k$-marked long virtual knot diagram 
with $C_k=\{c_1,\dots,c_k\}$. 
For $k$ integers $\de_1,\dots,\de_k\in\{0,1\}$, 
we also, by abuse of notation, denote by $D_{\de_1,\dots,\de_k}$ 
the long virtual knot diagram obtained from $(D,C_k)$ 
by modifying $c_i\in C_k$ with the rule 
shown in Figure~\ref{fig:semi-virtual}. 
Let $K_{\de_1,\dots,\de_k}$ be the long virtual knot 
presented by $D_{\de_1,\dots,\de_k}$.

\begin{figure}[htbp]
  \centering
    \begin{overpic}[]{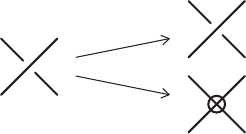}
      \put(12,15){$c_{i}$}
      \put(12,38){$*$}
      \put(44,48){$\de_{i}=0$}
      \put(44,8){$\de_{i}=1$}
    \end{overpic}
  \caption{Performing a virtualization for $\de_i=1$}
  \label{fig:semi-virtual}
\end{figure}

An invariant $v$ of long virtual knots taking values in an abelian group 
is {\it not} a finite-type invariant under virtualizations 
if and only if there exist infinitely many integers $n\geq0$ 
such that there is an $n$-marked diagram $(D(n),C_n)$ 
satisfying 
\[v(D(n),C_{n})=\sum_{\de_1,\dots,\de_n \in\{0,1\}}
(-1)^{\de_1+\dots+\de_n} v(K(n)_{\de_1,\dots,\de_n})\ne 0.\]

\begin{theorem}\label{thm91}
For any $a\in\{0,1\}$, 
the $a$-writhe polynomial $W_{a}(K;t)$ is not a finite-type invariant 
under virtualizations. 
\end{theorem}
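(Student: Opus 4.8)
The plan is to run the machinery of Theorems~\ref{thm81} and~\ref{thm82} in reverse. By the criterion recalled before Theorem~\ref{thm91}, it suffices to produce, for infinitely many $n$, an $n$-marked long virtual knot diagram $(D(n),C_n)$ whose $n$-th alternating sum
$\sum_{\de_1,\dots,\de_n\in\{0,1\}}(-1)^{\de_1+\dots+\de_n}W_a(K(n)_{\de_1,\dots,\de_n};t)$
is a nonzero Laurent polynomial. Fix $a\in\{0,1\}$.

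First I would choose the family $D(n)$ at the level of Gauss diagrams. Since every long Gauss diagram is realized by a long virtual knot diagram, take the one on the oriented interval whose $2n$ chord endpoints occur in the order $a_1\cdots a_nb_1\cdots b_n$, where $c_i$ joins $a_i$ to $b_i$; orient each chord so that $c_i$ is of type $a$, and give every $c_i$ sign $+1$. Let $C_n=\{c_1,\dots,c_n\}$ consist of all $n$ crossings, so $D(n)$ has no further real crossings. Because a virtualization turns a real crossing into a virtual one, the Gauss diagram of $D(n)_{\de_1,\dots,\de_n}$ is obtained from that of $D(n)$ by deleting the chords $c_j$ with $\de_j=1$; in particular, every surviving chord still has sign $+1$ and type $a$.

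The key step is the evaluation of $W_a$. For a deletion pattern $\de$, we get $W_a(K(n)_{\de_1,\dots,\de_n};t)=\sum_{i:\,\de_i=0}(t^{\alpha_i\cdot\beta_i}-1)$, where $\alpha_i\cdot\beta_i=\alpha_i\cdot\gamma$ with $\gamma$ the knot cycle of $D(n)_\de$. Using the Gauss-diagram computation of an intersection number (as in Section~\ref{sec4} and \cite[Section 3]{HNNS1}), $\alpha_i\cdot\gamma$ is a sum of $\pm1$ contributions, one for each surviving chord linked with $c_i$ — and in the order $a_1\cdots a_nb_1\cdots b_n$ every pair of chords is linked. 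Hence $\alpha_i\cdot\beta_i=M_i-\sum_{j\ne i}n_{ij}\de_j$ for integers $M_i$ and $n_{ij}\in\{+1,-1\}$ depending only on $n$. Substituting and interchanging sums,
\[
\sum_{\de_1,\dots,\de_n\in\{0,1\}}(-1)^{\de_1+\dots+\de_n}W_a(K(n)_{\de_1,\dots,\de_n};t)
=\sum_{i=1}^{n}\ \sum_{\substack{\de_j\in\{0,1\}\\ j\ne i}}(-1)^{\sum_{j\ne i}\de_j}\bigl(t^{M_i-\sum_{j\ne i}n_{ij}\de_j}-1\bigr).
\]
For $n\ge2$ the constant $-1$ contributes $0$ to the inner sum, and the rest factors: the right-hand side equals $\sum_{i=1}^{n}t^{M_i}\prod_{j\ne i}\bigl(1-t^{-n_{ij}}\bigr)$, a sum of products of nonzero binomials. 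I would finish by a degree count: with the chosen ordering exactly one index $i$ (the chord whose spanning arc lies to one side of all the others) attains the extreme $t$-exponent, so the coefficient of that monomial in the alternating sum is $\pm1\ne0$. This yields a nonzero Laurent polynomial for every $n\ge2$, hence $W_a$ is not a finite-type invariant of any degree under virtualizations; the case $a=1$ follows symmetrically by reversing all chord orientations, or from Theorem~\ref{thm51}.

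The hard part will be the purely combinatorial bookkeeping: extracting the signs $n_{ij}$ and the offsets $M_i$ from the Gauss-diagram intersection formula for the chosen endpoint order, and verifying that $\sum_i t^{M_i}\prod_{j\ne i}(1-t^{-n_{ij}})$ does not collapse through cancellation — for which exhibiting one surviving extreme coefficient, as above, is enough. The conceptual content, by contrast, is short: a virtualization deletes a chord, so $\alpha_i\cdot\beta_i$ depends affinely on the deletion variables $\de_j$ through all the other chords, whereas a crossing change leaves each $\alpha_i\cdot\beta_i$ untouched (exactly why $W_a$ is only of degree one under crossing changes); this affine dependence is what makes every $n$-th difference survive under virtualizations.
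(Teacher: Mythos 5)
Your proposal is correct, and at the strategic level it mirrors the paper's proof: produce, for every $n$, a marked diagram whose alternating sum has an extremal-degree monomial that cannot cancel. The realization, however, is genuinely different. The paper recycles the diagram $D(n)$ of Example~\ref{ex41}, marks only $c_1,\dots,c_{n-2}$, observes that virtualizing a subset of the marked crossings leaves a diagram of $K(s+2)$, and so rewrites the alternating sum as $\sum_{s=0}^{n-2}(-1)^{n-2-s}\binom{n-2}{s}W_0(K(s+2);t)$; since $W_0(K(s+2);t)=t^{s+2}-(s+2)t+s+1$, the monomial $t^{n}$ occurs only for $s=n-2$ and the sum is nonzero, with $a=1$ handled via Theorem~\ref{thm51}(ii). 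You instead mark \emph{all} chords of a maximally linked Gauss diagram and exploit that virtualization deletes chords, so each exponent $\alpha_i\cdot\beta_i$ is affine in the deletion variables with coefficients $n_{ij}\in\{\pm1\}$; the full $n$-th difference then factors as $\sum_i t^{M_i}\prod_{j\ne i}(1-t^{-n_{ij}})$, a closed form the paper never needs. The deferred bookkeeping does work out: with all arrows parallel one gets (up to a global sign) $n_{ij}=+1$ for $j<i$, $n_{ij}=-1$ for $j>i$, and $M_i=\sum_{j\ne i}n_{ij}$, so the $i$-th term has top degree $p_i=\#\{j:n_{ij}=+1\}$ with top coefficient $(-1)^{\,n-1-p_i}$, uniquely maximized at an extremal chord; in fact uniqueness is not even needed, since terms sharing the maximal $p$ have equal-sign top coefficients and cannot cancel. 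One wording slip: your parenthetical ``the chord whose spanning arc lies to one side of all the others'' does not describe your configuration, in which every pair of chords is linked --- you mean the extremal chord $c_1$ or $c_n$, for which all the $n_{ij}$ have the same sign. With that repaired, your route is a valid and somewhat more self-contained alternative, at the cost of redoing index computations that the paper simply imports from Example~\ref{ex41}.
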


\begin{proof}
By Theorem~\ref{thm51}(ii), 
it suffices to consider the $0$-writhe polynomial $W_{0}$. 
Let $D(n)$ $(n\geq2)$ be the long virtual knot diagram 
given in Example~\ref{ex41}. 
We consider the $(n-2)$-marked diagram $(D(n),C_{n-2})$ 
with $C_{n-2}=\{c_1,\dots,c_{n-2}\}$. 
Let $s$ denote the number of $0$'s among 
the integers $\de_1,\dots,\de_{n-2}\in\{0,1\}$. 
Since $D(n)_{\de_1,\dots,\de_{n-2}}$ 
represents the long virtual knot $K(s+2)$, 
we have 
\begin{align*}
&\sum_{\de_1,\dots,\de_{n-2}\in\{0,1\}}
(-1)^{\de_1+\dots+\de_{n-2}}
W_0(D(n)_{\de_1,\dots,\de_{n-2}};t) \\
&=\sum_{s=0}^{n-2} (-1)^{n-2-s}{n-2\choose s}
W_0(K(s+2);t).
\end{align*} 
By the equation  
$W_0(K(s+2);t)=t^{s+2}-(s+2)t+s+1$ given in Example \ref{ex41}, 
the maximal degree of the sum is equal to $n$, 
which completes the proof. 
\end{proof}

\begin{theorem}\label{thm92}
For any $a,b\in\{0,1\}$ and $X\in\{F,G,H\}$, 
the intersection polynomial $X_{ab}(K;t)$ is not a finite-type invariant under virtualizations. 
\end{theorem}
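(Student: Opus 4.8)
The plan is to follow the same strategy used in the proof of Theorem~\ref{thm91}, building on the computations already recorded in Section~\ref{sec4}. By Lemma~\ref{lem23}, it suffices to treat $X_{ab}$ for the representatives $a=b=0$ and $a=0,b=1$ (and similarly for $H$), since the remaining ones are obtained by $t\mapsto t^{-1}$ or by symmetry, and the $t\mapsto t^{-1}$ substitution does not affect whether a Laurent polynomial has unbounded degree as a parameter grows. So I would concentrate on exhibiting, for infinitely many $n$, an $(n-2)$-marked diagram whose alternating virtualization sum has a component (maximal or minimal degree) tending to infinity with $n$.

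First I would take the diagram $D(n)$ from Example~\ref{ex41} and the $(n-2)$-marked diagram $(D(n),C_{n-2})$ with $C_{n-2}=\{c_1,\dots,c_{n-2}\}$, exactly as in the proof of Theorem~\ref{thm91}. The key geometric observation, already used there, is that performing a virtualization at a real crossing deletes that crossing's contribution entirely (it becomes a pair of virtual crossings), so if $s$ of the $\de_i$ equal $0$ then $D(n)_{\de_1,\dots,\de_{n-2}}$ presents $K(s+2)$. Hence for each $X\in\{F,G,H\}$,
\[
\sum_{\de_1,\dots,\de_{n-2}\in\{0,1\}}(-1)^{\de_1+\dots+\de_{n-2}}X_{00}(K(n)_{\de_1,\dots,\de_{n-2}};t)
=\sum_{s=0}^{n-2}(-1)^{n-2-s}\binom{n-2}{s}X_{00}(K(s+2);t).
\]
Now I would plug in the explicit formulas from Example~\ref{ex41}: $F_{00}(K(m);t)=-(t^{m-1}+t^{-m+1})-\dots-(t+t^{-1})+2(m-1)$ has maximal degree $m-1$, and $H_{00}(K(m);t)$ has maximal degree $m$; for $G_{00}$ one uses $G_{00}(K(m);t)=(m-2)t^m-2(t^{m-1}+\dots+t^2)+(m-2)t$ for $m\geq3$, of maximal degree $m$. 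In each case the coefficient of the top-degree term $t^{s+2-1}$ or $t^{s+2}$ of $X_{00}(K(s+2);t)$ is a fixed constant (for $F$) or a linear polynomial in $s$ (for $G$ and $H$): the coefficient of $t^{s+2}$ in $H_{00}(K(s+2);t)$ is $s+1$, and in $G_{00}(K(s+2);t)$ it is $s$. Since the finite-difference operator $\sum_{s}(-1)^{n-2-s}\binom{n-2}{s}$ annihilates polynomials in $s$ of degree $<n-2$ but these coefficients only contribute to the terms of degree $\leq n$ in $t$, the coefficient of $t^n$ in the total sum is (up to sign) the leading coefficient of a degree-$1$ polynomial in $s$ applied via an $(n-2)$-fold difference, which vanishes for $n\geq4$; the informative bound is instead that the sum is a Laurent polynomial whose maximal degree equals $n$ with a nonzero coefficient coming from the unique term $s=n-2$, exactly as in the proof of Theorem~\ref{thm91}. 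Thus the alternating sum is nonzero for infinitely many $n$, so $X_{00}$ is not finite-type under virtualizations.

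For the cases where $X_{00}$ vanishes identically on the family $K(n)$ — note $F_{00}(K(n);t)$, $G_{00}(K(n);t)$, $H_{00}(K(n);t)$ are all nonzero, so this does not occur here, but one still must handle $X_{ab}$ with $(a,b)\neq(0,0)$ — I would instead use the family $K'(n)$ from Example~\ref{ex42}, together with its $(n-1)$-marked or suitably marked diagram $D'(n)$, and the same finite-difference argument applied to the explicit formulas there: $F_{01}(K'(n);t)$ has degree $n-1$, $G_{01}(K'(n);t)$ and $H_{10}(K'(n);t)$ have degree $n$, etc. One subtlety is that virtualizing a crossing of $D'(n)$ must again be shown to realize $K'(s+2)$ (or a related family with explicitly known polynomials) after $s$ of the marked crossings are left unchanged; this requires checking that the underlying curve behaves as in $\Gamma$ under these operations, but that is the same bookkeeping as in Example~\ref{ex42}. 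The main obstacle, and the part I expect to require care, is precisely this: verifying that for each pair $(X,a,b)$ there is a family among $\{K(n)\}$, $\{K'(n)\}$ (or mirrors/reverses thereof, invoking Theorem~\ref{thm52}) on which $X_{ab}$ is a Laurent polynomial of degree growing linearly in $n$, and that the virtualization operation on the marked diagram genuinely produces the smaller member of that family — once that is in place, the finite-difference/maximal-degree argument of Theorem~\ref{thm91} applies verbatim.
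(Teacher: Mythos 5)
Your proposal follows essentially the same route as the paper: the families $K(n)$ and $K'(n)$ from Examples~\ref{ex41} and~\ref{ex42}, the $(n-2)$-marked diagrams with $C_{n-2}=\{c_1,\dots,c_{n-2}\}$, the observation that virtualizing the marked crossings leaves $K(s+2)$ (resp.\ $K'(s+2)$), and the fact that the top-degree term of the alternating sum comes only from $s=n-2$; the paper then handles $F_{00},G_{00},H_{00}$ via $K(n)$, handles $F_{01},G_{01},H_{10}$ via $K'(n)$ (so the ``subtlety'' you flag is resolved exactly as in Theorem~\ref{thm91}), and disposes of the remaining six polynomials by Theorem~\ref{thm52}(i). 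Two slips in your write-up should be corrected, though neither is fatal. First, the opening reduction ``by Lemma~\ref{lem23} it suffices to treat $a=b=0$ and $a=0,b=1$'' is not justified: Lemma~\ref{lem23} relates $F_{01}$ with $F_{10}$ and $H_{01}$ with $H_{10}$ and says the diagonal $F$'s and $H$'s are reciprocal, but it says nothing about any $G_{ab}$ and does not relate $X_{11}$ to $X_{00}$; the needed symmetry is Theorem~\ref{thm52}(i) (mirror image exchanges $X_{ab}$ and $X_{a'b'}$), which you only invoke parenthetically at the end but which is the actual mechanism, as in the paper. Second, the clause asserting that ``the coefficient of $t^n$ in the total sum is \dots an $(n-2)$-fold difference of a degree-$1$ polynomial in $s$, which vanishes for $n\geq4$'' is false and should be deleted: since $X_{00}(K(s+2);t)$ has maximal degree at most $s+2<n$ for $s<n-2$, the coefficient of $t^{n}$ (or $t^{n-1}$ for $F$) in the alternating sum is exactly the top coefficient of $X_{00}(K(n);t)$ (for instance $n-2$ for $G_{00}$ and $n-1$ for $H_{00}$), which is nonzero for all large $n$; your subsequent sentence already states this correct argument, and it is the one the paper uses.
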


\begin{proof}
For the polynomials $F_{00}$, $G_{00}$, and $H_{00}$, 
the proofs are similar to that of $W_0$ in Theorem~\ref{thm91}. 
In fact, the maximal degrees of 
$F_{00}(K(n);t)$, $G_{00}(K(n);t)$, and $H_{00}(K(n);t)$ $(n\geq 3)$ 
are equal to $n-1$, $n$, and $n$, respectively. 

On the other hand, for the polynomials 
$F_{01}$, $G_{01}$, and $H_{10}$, 
we may use the $(n-2)$-marked diagram 
$(D'(n), C'_{n-2})$ with  $C_{n-2}'=\{c_1,\dots,c_{n-2}\}$, 
where $D'(n)$ is the long virtual knot diagram 
given in Example~\ref{ex42}. 
In this case, the maximal degrees of 
$F_{01}(K'(n);t)$, $G_{01}(K'(n);t)$, and $H_{10}(K'(n);t)$ 
are equal to $n-1$, $n$, and $n$, respectively. 
Therefore, $F_{01}$, $G_{01}$, and $H_{10}$ are not finite-type invariants.

By Theorem~\ref{thm52}(i), the remaining  
intersection polynomials are also not finite-type invariants. 
\end{proof}


\section{The intersection polynomials of a virtual knot}\label{sec10}

For a diagram $D$ of a long virtual knot $K$, 
we denote by $\widehat{D}$ 
the diagram obtained from $D$ 
by identifying $-\infty$ with $\infty$. 
The {\it closure} of $K$ is the virtual knot 
presented by $\widehat{D}$, 
and is denoted by $\widehat{K}$.

We recall the definitions of the writhe and intersection polynomials 
of a virtual knot $\kappa$. 
Let $\Delta$ be a diagram of a virtual knot $\kappa$, 
and $c_1,\dots,c_n$ the real crossings of $\Delta$. 
Consider a surface realization $(\Sigma_{g},\Delta)$ of $\Delta$.  
For each real crossing $c_i$, 
let $\gamma_i$ (resp. $\overline{\gamma}_i$) 
be the cycle on $\Sigma_{g}$ presented by the part of $\Delta$ 
running from the overcrossing to the undercrossing 
(resp. from the undercrossing to the overcrossing) at $c_i$. 
See Figure~\ref{fig:cycles-knot}. 
We denote by $\omega(\Delta)$ the sum of signs 
of real crossings of $\Delta$. 
Then the following Laurent polynomials are invariants of $\kappa$; 
\begin{align*}
&W(\kappa;t)=\sum_{i=1}^n \varepsilon_i
(t^{\gamma_i\cdot\overline{\gamma}_i}-1), \\
&
I(\kappa;t)=\sum_{i,j=1}^n
\varepsilon_i\varepsilon_j(t^{\gamma_i\cdot\overline{\gamma}_j}-1)
-\omega(\Delta) W(\kappa;t), \mbox{ and }\\
&
I\!I(\kappa;t)=\sum_{i,j=1}^n
\varepsilon_i\varepsilon_j(t^{\gamma_i\cdot\gamma_j}-1)
+\sum_{i,j=1}^n
\varepsilon_i\varepsilon_j
(t^{\overline{\gamma}_i\cdot\overline{\gamma}_j}-1)
-\omega(\Delta)\qty(W(\kappa;t)+W(\kappa;t^{-1})).
\end{align*}
These are called the {\it writhe polynomial} \cite{ST}, {\it first intersection polynomial}, and 
{\it second intersection polynomial} \cite{HNNS1} of $\kappa$, respectively. 
We remark that the writhe polynomial is 
also defined in \cite{CG,K3} independently. 

\begin{figure}[t]
  \centering
    \begin{overpic}[]{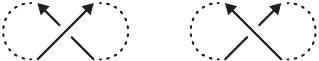}
      \put(28,22){$c_{i}$}
      \put(19,-12){$\e_{i}=1$}
      \put(10,13){$\overline{\gamma}_{i}$}
      \put(45,13){$\gamma_{i}$}
      \put(118.5,22){$c_{i}$}
      \put(101,12){$\gamma_{i}$}
      \put(135.5,12){$\overline{\gamma}_{i}$}
      \put(105.5,-12){$\e_{i}=-1$}
    \end{overpic}
  \vspace{1em}
  \caption{The two cycles $\gamma_{i}$ and $\overline{\gamma}_{i}$ at a real crossing $c_{i}$ of $\Delta$}
  \label{fig:cycles-knot}
\end{figure}

For a long virtual knot $K$, 
the polynomials $W$, $I$, and $I\!I$ of the closure $\widehat{K}$ 
is expressed by using $W_{a}$, $F_{ab}$, $G_{ab}$ and $H_{ab}$ of $K$ 
as follows. 

\begin{proposition}\label{prop101}
For a long virtual knot $K$, we have the following. 
\begin{enumerate}
\setlength{\itemsep}{1mm}
\item
$W(\widehat{K};t)=W_0(K;t)+W_1(K;t^{-1})$. 
\item
$I(\widehat{K};t)=F_{01}(K;t)+G_{00}(K;t)+G_{11}(K;t^{-1})+H_{01}(K;t^{-1})$. 
\item
$I\!I(\widehat{K};t)=F_{00}(K;t)+F_{11}(K;t)+
G_{01}(K;t)+G_{01}(K;t^{-1})$\smallskip\\
\hphantom{$I\!I_{\widehat{K}}(t)=$} 
$+G_{10}(K;t)+G_{10}(K;t^{-1})
+H_{00}(K;t)+H_{11}(K;t)$. 
\end{enumerate}
\end{proposition}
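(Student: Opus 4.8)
The strategy is to relate the cycles $\gamma_i,\overline{\gamma}_i$ used in the definition of the virtual-knot invariants $W,I,I\!I$ of $\widehat{K}$ to the cycles $\alpha_i,\beta_i$ attached to the long virtual knot $K$. Fix a surface realization $(\Sigma_g,D)$ of $K$ with basepoint corresponding to $\pm\infty$; then $(\Sigma_g,\widehat{D})$ is a surface realization of $\widehat{K}$, and both $D$ and $\widehat{D}$ present the \emph{same} immersed cycle $\gamma_D$ on $\Sigma_g$. Smoothing at a real crossing $c_i$ produces one loop not containing the basepoint and one loop containing it; the former is $\alpha_i$, the latter is $\beta_i$, and $\alpha_i+\beta_i=\gamma_D$. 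The decomposition into $\gamma_i$ (overcrossing-to-undercrossing arc, closed up) and $\overline{\gamma}_i$ (undercrossing-to-overcrossing arc, closed up) is the \emph{same} pair of loops, but the labeling depends on the type of $c_i$: if $c_i$ is of type $0$ (first over, then under along $D$ from $-\infty$ to $\infty$), the arc from over to under does not pass the basepoint, so $\gamma_i=\alpha_i$ and $\overline{\gamma}_i=\beta_i$; if $c_i$ is of type $1$, the arc from over to under does pass the basepoint, so $\gamma_i=\beta_i$ and $\overline{\gamma}_i=\alpha_i$. This dictionary is the crux of the argument.

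With this dictionary in hand, (i) is immediate: splitting $\sum_{i=1}^n \varepsilon_i(t^{\gamma_i\cdot\overline{\gamma}_i}-1)$ over $I_0$ and $I_1$, the $I_0$-part is $\sum_{i\in I_0}\varepsilon_i(t^{\alpha_i\cdot\beta_i}-1)=W_0(K;t)$, and the $I_1$-part is $\sum_{i\in I_1}\varepsilon_i(t^{\beta_i\cdot\alpha_i}-1)=\sum_{i\in I_1}\varepsilon_i(t^{-\alpha_i\cdot\beta_i}-1)=W_1(K;t^{-1})$, using $\beta_i\cdot\alpha_i=-\alpha_i\cdot\beta_i$. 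For (ii) and (iii) I would expand the double sums $\sum_{i,j}\varepsilon_i\varepsilon_j(t^{\gamma_i\cdot\overline{\gamma}_j}-1)$ and the two sums $\sum\varepsilon_i\varepsilon_j(t^{\gamma_i\cdot\gamma_j}-1)$, $\sum\varepsilon_i\varepsilon_j(t^{\overline{\gamma}_i\cdot\overline{\gamma}_j}-1)$ by partitioning the index pairs $(i,j)$ according to $(a,b)$ with $i\in I_a$, $j\in I_b$. On each of the four blocks the dictionary converts $\gamma_i\cdot\overline{\gamma}_j$ into one of $\alpha_i\cdot\beta_j$, $\alpha_i\cdot\alpha_j$, $\beta_i\cdot\beta_j$, $\beta_i\cdot\alpha_j$, and likewise for the $\gamma\gamma$ and $\overline{\gamma}\,\overline{\gamma}$ sums; the antisymmetry $\mu\cdot\nu=-\nu\cdot\mu$ lets me rewrite $t^{\text{(mixed)}}$ as $t^{-1}$-substitutions, e.g. a $\beta_i\cdot\alpha_j$ with $i\in I_a,j\in I_b$ equals $-\alpha_j\cdot\beta_i$, which after reindexing is a $g_{b a}$-type term evaluated at $t^{-1}$.

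Carrying this out, the raw sums produce the $f_{ab},g_{ab},h_{ab}$ pieces together with $\omega$-correction terms coming from the $-\omega(\widehat{D})W(\widehat{K};t)$ (and $t^{-1}$) summands in the definitions of $I$ and $I\!I$. The key bookkeeping fact is $\omega(\widehat{D})=\omega_0(D)+\omega_1(D)$, so I would choose $D$ untwisted (Remark~\ref{rem34}), making $\omega(\widehat{D})=0$; then $I(\widehat{K};t)=\sum\varepsilon_i\varepsilon_j(t^{\gamma_i\cdot\overline{\gamma}_j}-1)$ and similarly for $I\!I$, and each block directly matches the corresponding $f_{ab},g_{ab},h_{ab}=F_{ab},G_{ab},H_{ab}$ (again by Remark~\ref{rem34}, since $\omega_a(D)=0$). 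For instance in (ii), the $(a,b)=(0,1)$ block of $\sum\varepsilon_i\varepsilon_j(t^{\gamma_i\cdot\overline{\gamma}_j}-1)$ gives $\sum_{i\in I_0,j\in I_1}\varepsilon_i\varepsilon_j(t^{\alpha_i\cdot\alpha_j}-1)=F_{01}(K;t)$; the $(0,0)$ block gives $\sum_{i,j\in I_0}\varepsilon_i\varepsilon_j(t^{\alpha_i\cdot\beta_j}-1)=G_{00}(K;t)$; the $(1,1)$ block gives $\sum_{i,j\in I_1}\varepsilon_i\varepsilon_j(t^{\beta_i\cdot\alpha_j}-1)=G_{11}(K;t^{-1})$; and the $(1,0)$ block gives $\sum_{i\in I_1,j\in I_0}\varepsilon_i\varepsilon_j(t^{\beta_i\cdot\beta_j}-1)=H_{01}(K;t^{-1})$ (using $H_{01}(K;t)=H_{10}(K;t^{-1})$ from Lemma~\ref{lem23} together with an index swap). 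Summing the four blocks yields the claimed formula, and (iii) is obtained the same way from the two sums defining $I\!I$, with the $(0,1)$ and $(1,0)$ cross-blocks contributing the symmetric pairs $G_{01}(K;t)+G_{01}(K;t^{-1})$ and $G_{10}(K;t)+G_{10}(K;t^{-1})$.

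\textbf{Main obstacle.} The genuine content is establishing the cycle dictionary $\{\gamma_i,\overline{\gamma}_i\}\leftrightarrow\{\alpha_i,\beta_i\}$ correctly — in particular getting the type-dependent swap right and tracking which of the mixed intersection numbers pick up a sign (hence a $t\mapsto t^{-1}$). Once that is pinned down, everything else is a disciplined but routine partition of index pairs into four blocks plus the observation $\omega(\widehat{D})=\omega_0+\omega_1$; the remaining risk is purely clerical: making sure each block lands in the right one of $F,G,H$ at the right argument ($t$ versus $t^{-1}$), which I would double-check against the symmetry relations in Lemma~\ref{lem23}.
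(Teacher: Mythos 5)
Your proposal is correct and follows essentially the same route as the paper: the type-dependent dictionary $(\gamma_i,\overline{\gamma}_i)=(\alpha_i,\beta_i)$ or $(\beta_i,\alpha_i)$ is exactly the paper's Lemma~\ref{lem102}, and the paper likewise takes $D$ untwisted so that $\omega(\widehat{D})=0$ and then splits the sums into the four $(I_a,I_b)$-blocks, converting the mixed terms via antisymmetry and Lemma~\ref{lem23}(i). Your block-by-block identifications in (ii) and (iii) match the paper's computation term for term.
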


To prove Proposition~\ref{prop101}, 
we prepare the following lemma. 
Let $(\Sigma_{g},D)$ be a surface realization of $K$, 
and $c_1,\dots,c_n$ the real crossings of $D$. 
Recall that $I_a(D)$ is the set of indices of real crossings 
of type $a\in\{0,1\}$. 
The surface realization $(\Sigma_{g},D)$ can also be regarded as 
a surface realization $(\Sigma_{g},\widehat{D})$ 
of the closure $\widehat{K}$ 
by ignoring the basepoint of $D$. 
By abuse of notation, we also use $c_i$ 
to denote the corresponding crossing of $\widehat{D}$. 

\begin{lemma}\label{lem102}
For any integer $i\in\{1,\dots,n\}$, we have 
\[(\gamma_i,\overline{\gamma}_i)=
\begin{cases} 
(\alpha_i,\beta_i) & \mbox{for }i\in I_0(D), \\
(\beta_i,\alpha_i) & \mbox{for }i\in I_1(D). 
\end{cases}\]
\end{lemma}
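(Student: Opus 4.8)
The plan is to unwind the definitions of the two pairs of cycles and compare them crossing by crossing, using the type classification. Recall that for a long virtual knot diagram $D$ with basepoint $\pm\infty$, smoothing at a real crossing $c_i$ produces two oriented loops; by definition $\alpha_i$ is the one avoiding the basepoint and $\beta_i$ is the one containing it. On the other hand, for the closure $\widehat{D}$ the cycle $\gamma_i$ is, by definition, the loop obtained by traversing $\widehat{D}$ from the overcrossing to the undercrossing at $c_i$, and $\overline{\gamma}_i$ is the complementary arc from the undercrossing back to the overcrossing. So the whole content of the lemma is: which of the two smoothing loops $\{\alpha_i,\beta_i\}$ is the ``over-to-under'' arc?

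First I would fix a real crossing $c_i$ and describe $\widehat{D}$ as a cyclic word read off from $D$, with the basepoint sitting somewhere in this cyclic word. The crossing $c_i$ appears twice in this word, once as an overcrossing and once as an undercrossing, and these two occurrences split the circle $\widehat{D}$ into two arcs: the arc $\gamma_i$ running from the over-occurrence to the under-occurrence (in the direction of the orientation), and the arc $\overline{\gamma}_i$ running from the under-occurrence to the over-occurrence. The basepoint lies in exactly one of these two arcs. Now I use the type definition: $c_i$ is of type $0$ precisely when, going along $D$ from $-\infty$ to $\infty$, we meet the overcrossing before the undercrossing. Since the basepoint corresponds to $\pm\infty$, this says exactly that the arc from the basepoint forward meets the over-occurrence first and the under-occurrence second — equivalently, the basepoint lies on the arc from the under-occurrence to the over-occurrence, which is $\overline{\gamma}_i$. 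Hence for $i\in I_0(D)$ the basepoint is in $\overline{\gamma}_i$, so $\overline{\gamma}_i=\beta_i$ and $\gamma_i=\alpha_i$. Symmetrically, for $i\in I_1(D)$ the overcrossing is met after the undercrossing, so the basepoint lies on $\gamma_i$, giving $\gamma_i=\beta_i$ and $\overline{\gamma}_i=\alpha_i$.

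The one point requiring a little care — and the step I expect to be the main obstacle — is checking that the \emph{orientations} of the loops match, not just their underlying arcs as unoriented curves: the smoothing operation producing $(\alpha_i,\beta_i)$ is oriented-coherent with $D$, and I must confirm that the over-to-under convention defining $\gamma_i,\overline{\gamma}_i$ induces the same orientation on each arc. This is immediate from the fact that both conventions simply traverse $\widehat{D}$ (respectively $D$) in its given orientation, so the induced orientations on each of the two arcs agree; the oriented smoothing does not reverse either strand. Once this is observed, the identification of the oriented cycles is complete, and since intersection numbers are computed from these oriented homology classes, the lemma follows. (A picture, as in Figure~\ref{fig:cycles-knot} compared with Figure~\ref{fig:cycles}, makes all of this transparent.)
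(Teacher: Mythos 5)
Your argument is correct and is essentially the paper's proof written out in words: the paper simply points to Figure~\ref{fig:cycles-knot-pf}, which records exactly the same case check (which of the two smoothing loops contains the basepoint, determined by whether the over- or under-passage of $c_i$ comes first from $\pm\infty$). Your explicit treatment of the orientation coherence is a nice touch but raises no issue beyond what the figure already shows.
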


\begin{proof}
See Figure~\ref{fig:cycles-knot-pf}, 
where ``$\bullet$'' on a dotted line denotes the basepoint of $D$. 
\end{proof}

\begin{figure}[htbp]
  \centering
    \begin{overpic}[]{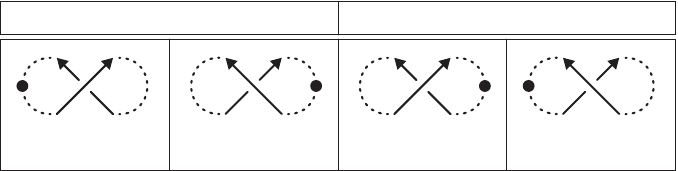} 
      \put(61,70){$i\in I_{0}(D)$}
      \put(224,70){$i\in I_{1}(D)$}
      \put(37,49){$c_{i}$}
      \put(19,39){$\overline{\gamma}_{i}$}
      \put(54,39){$\gamma_{i}$}
      \put(12,17){$\beta_{i}$}
      \put(61,18){$\alpha_{i}$}
      \put(29,5){$\e_{i}=1$}
      \put(118.5,49){$c_{i}$}
      \put(101,39){$\gamma_{i}$}
      \put(135.5,39){$\overline{\gamma}_{i}$}
      \put(94,18){$\alpha_{i}$}
      \put(142.5,17){$\beta_{i}$}
      \put(106,5){$\e_{i}=-1$}
      \put(200,49){$c_{i}$}
      \put(182,39){$\overline{\gamma}_{i}$}
      \put(217,39){$\gamma_{i}$}
      \put(175,18){$\alpha_{i}$}
      \put(224,17){$\beta_{i}$}
      \put(191,5){$\e_{i}=1$}
      \put(281,49){$c_{i}$}
      \put(263.5,39){$\gamma_{i}$}
      \put(298,39){$\overline{\gamma}_{i}$}
      \put(256.5,17){$\beta_{i}$}
      \put(305,18){$\alpha_{i}$}
      \put(269,5){$\e_{i}=-1$}
    \end{overpic}
  \caption{Proof of Lemma~\ref{lem102}}
  \label{fig:cycles-knot-pf}
\end{figure}

\begin{proof}[Proof of {\rm Proposition~\ref{prop101}}] 
We may assume that $D$ is untwisted. 

(i) By Lemma~\ref{lem102}, we have 
\begin{align*}
W(\widehat{K};t)
&=
\sum_{i=1}^n \varepsilon_i
(t^{\gamma_i\cdot\overline{\gamma}_i}-1)=
\sum_{i\in I_0}\varepsilon_i
(t^{\alpha_i\cdot\beta_i}-1)
+
\sum_{i\in I_1}\varepsilon_i
(t^{\beta_i\cdot\alpha_i}-1)\\
&=
W_0(K;t)+W_1(K;t^{-1}). 
\end{align*}

(ii) Since we have $\omega(\widehat{D})=0$, 
it follows from Lemma~\ref{lem102} that 
\begin{align*}
I(\widehat{K};t)
&=
\sum_{i,j=1}^n
\varepsilon_i\varepsilon_j(t^{\gamma_i\cdot\overline{\gamma}_j}-1)\\
&=\sum_{i,j\in I_0}\varepsilon_i\varepsilon_j(t^{\alpha_i\cdot\beta_j}-1)
+
\sum_{i\in I_0,\, j\in I_1}
\varepsilon_i\varepsilon_j(t^{\alpha_i\cdot\alpha_j}-1)\\
&\quad+
\sum_{i\in I_1,\, j\in I_0}
\varepsilon_i\varepsilon_j(t^{\beta_i\cdot\beta_j}-1)
+\sum_{i,j\in I_1}
\varepsilon_i\varepsilon_j(t^{\beta_i\cdot\alpha_j}-1)
\\
&=
g_{00}(D;t)+f_{01}(D;t)+h_{10}(D;t)+g_{11}(D;t^{-1})\\
&=
G_{00}(K;t)+F_{01}(K;t)+H_{10}(K;t)+G_{11}(K;t^{-1}).
\end{align*}
Since $H_{10}(K;t)=H_{01}(K;t^{-1})$ holds by Lemma~\ref{lem23}(i), 
we have the conclusion. 

(iii) Similarly to the proof of (ii), 
we have 
\begin{align*}
I\!I(\widehat{K};t)
&=
\sum_{i,j=1}^n
\varepsilon_i\varepsilon_j(t^{\gamma_i\cdot\gamma_j}-1)
+\sum_{i,j=1}^n
\varepsilon_i\varepsilon_j(t^{\overline{\gamma}_i\cdot\overline{\gamma}_j}-1)\\
&=\sum_{i,j\in I_0}\varepsilon_i\varepsilon_j(t^{\alpha_i\cdot\alpha_j}-1)
+
\sum_{i\in I_0,\, j\in I_1}
\varepsilon_i\varepsilon_j(t^{\alpha_i\cdot\beta_j}-1)\\
&\quad+
\sum_{i\in I_1,\, j\in I_0}
\varepsilon_i\varepsilon_j(t^{\beta_i\cdot\alpha_j}-1)
+\sum_{i,j\in I_1}
\varepsilon_i\varepsilon_j(t^{\beta_i\cdot\beta_j}-1)\\
&\quad+
\sum_{i,j\in I_0}\varepsilon_i\varepsilon_j(t^{\beta_i\cdot\beta_j}-1)
+
\sum_{i\in I_0,\, j\in I_1}
\varepsilon_i\varepsilon_j(t^{\beta_i\cdot\alpha_j}-1)\\
&\quad+
\sum_{i\in I_1,\, j\in I_0}
\varepsilon_i\varepsilon_j(t^{\alpha_i\cdot\beta_j}-1)
+\sum_{i,j\in I_1}
\varepsilon_i\varepsilon_j(t^{\alpha_i\cdot\alpha_j}-1)\\
&=
f_{00}(D;t)+g_{01}(D;t)+g_{01}(D;t^{-1})+h_{11}(D;t)\\
&\quad
+h_{00}(D;t)+g_{10}(D;t^{-1})+g_{10}(D;t)+f_{11}(D;t)\\
&=
F_{00}(K;t)+G_{01}(K;t)+G_{01}(K;t^{-1})+H_{11}(K;t)\\
&\quad
+H_{00}(K;t)+G_{10}(K;t^{-1})+G_{10}(K;t)+F_{11}(K;t).
\end{align*}
\end{proof}

From some known properties of the writhe and intersection polynomials for virtual knots, 
we obtain the following relations 
among the values at $t=1$ of the first and second derivatives of 
the polynomials for a long virtual knot. 

\begin{theorem}\label{thm103}
For a long virtual knot $K$, we have the following. 
\begin{enumerate}
\setlength{\itemsep}{1mm}
\item
$W_0'(K;1)=W_1'(K;1)$. 
\item
$G'_{00}(K;1)=G'_{11}(K;1)=0$. 
\item 
$G'_{01}(K;1)+G'_{10}(K;1)=0$. 
\item
$F_{01}'(K;1)=H_{01}'(K;1)$ and 
$F_{10}'(K;1)=H_{10}'(K;1)$. 
\item 
$F''_{00}(K;1)+F''_{11}(K;1)+H''_{00}(K;1)+H''_{11}(K;1)\equiv0\pmod{4}$. 
\end{enumerate}
\end{theorem}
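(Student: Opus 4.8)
The plan is to derive each identity by combining Proposition~\ref{prop101}, which expresses the writhe and intersection polynomials of the closure $\widehat{K}$ in terms of the $W_a$, $F_{ab}$, $G_{ab}$, $H_{ab}$ of $K$, with known facts about the corresponding polynomials $W$, $I$, $I\!I$ for virtual knots (presumably proved in \cite{HNNS1}--\cite{HNNS4}), together with the symmetry relations of Lemma~\ref{lem23} and the normalization $X_{ab}(K;1)=0$ that is immediate from the definition in Theorem~\ref{thm22} (every summand $t^{\ast}-1$ vanishes at $t=1$, and the correction terms involve $W_a(K;1)=0$). Throughout I would write $f(1)=0$ for any of these Laurent polynomials and then differentiate the Proposition~\ref{prop101} identities once or twice and evaluate at $t=1$, using the chain rule $\tfrac{d}{dt}g(t^{-1})\big|_{t=1}=-g'(1)$ and $\tfrac{d^2}{dt^2}g(t^{-1})\big|_{t=1}=g''(1)+2g'(1)$.

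For (i): differentiate $W(\widehat K;t)=W_0(K;t)+W_1(K;t^{-1})$ at $t=1$ to get $W'(\widehat K;1)=W_0'(K;1)-W_1'(K;1)$; since $W'(\kappa;1)=0$ for every virtual knot $\kappa$ (a known property of the writhe polynomial — it equals $-W(\kappa;1)$-type constraint, or follows from reciprocity-type symmetry), we conclude $W_0'(K;1)=W_1'(K;1)$. For (ii) and (iii): Lemma~\ref{lem23}(ii) says $F_{00}$ and $F_{11}$ are reciprocal, hence $F_{00}'(K;1)=-F_{00}'(K;1)=0$, and similarly $H_{00}'(K;1)=H_{11}'(K;1)=0$; combined with $W_a'(K;1)$-relations this is not quite enough for $G$, so instead I would differentiate the $I$- and $I\!I$-identities of Proposition~\ref{prop101}. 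Using $I'(\widehat K;1)=0$ (known for the first intersection polynomial) and expanding $I(\widehat K;t)=F_{01}(K;t)+G_{00}(K;t)+G_{11}(K;t^{-1})+H_{01}(K;t^{-1})$ gives $F_{01}'(K;1)+G_{00}'(K;1)-G_{11}'(K;1)-H_{01}'(K;1)=0$; pairing this with the mirror relation $F_{10}'$, $H_{10}'$ obtained from Lemma~\ref{lem23}(i) and a second application (to $-K$ or $K^*$ via Theorem~\ref{thm52}) should isolate $G_{00}'(K;1)=G_{11}'(K;1)=0$. Part (iii) then follows by differentiating the $I\!I$-identity and using $I\!I'(\widehat K;1)=0$ together with the already-established vanishings, leaving only $G_{01}'(K;1)+G_{10}'(K;1)=0$. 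Part (iv) is the analogous statement that the ``$F$-part'' and the ``$H$-part'' of $I'(\widehat K;1)$ must separately vanish, which I would extract by also using the closure of the reverse or mirror of $K$ to get a second linear relation; alternatively (iv) may follow directly from a known coincidence $I'(\kappa;1)=I\!I'(\kappa;1)$ or from the degree-two finite-type structure.

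For (v): the second-derivative congruence. Here the key input is a known parity/congruence property of $I\!I''(\kappa;1)$ for virtual knots — likely $I\!I''(\kappa;1)\equiv 0 \pmod 4$ or $\pmod 2$ coming from the symmetry $\gamma_i\cdot\gamma_j=-\gamma_j\cdot\gamma_i$ forcing the diagonal-excluded part to pair up, plus the reciprocity of $I\!I$. I would differentiate $I\!I(\widehat K;t)=F_{00}+F_{11}+G_{01}+G_{01}(t^{-1})+G_{10}+G_{10}(t^{-1})+H_{00}+H_{11}$ twice at $t=1$. The $G$-terms contribute $2\big(G_{01}''(K;1)+G_{10}''(K;1)\big)+2\big(G_{01}'(K;1)+G_{10}'(K;1)\big)$, and by (iii) the first-derivative part drops; the remaining $G$-contribution $2(G_{01}''+G_{10}'')$ — I would need its parity, which should come from the fact that $g_{01}(D;t)+g_{10}(D;t)$ is manifestly reciprocal-up-to-sign under swapping $i\leftrightarrow j$ (since $\alpha_i\cdot\beta_j$ and $\alpha_j\cdot\beta_i$ are unrelated, but $G_{10}(K;t)=$ something like the transpose), making $G_{01}''+G_{10}''$ even. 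Then $I\!I''(\widehat K;1)=F_{00}''+F_{11}''+H_{00}''+H_{11}''+2(\text{something even})$, and the mod-$4$ congruence for $I\!I''$ transfers to $F_{00}''+F_{11}''+H_{00}''+H_{11}''\equiv 0\pmod 4$.

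The main obstacle I anticipate is (v): pinning down exactly which congruence ($\pmod 2$ vs.\ $\pmod 4$) holds for $I\!I''(\kappa;1)$ and tracking the contribution of the cross terms $G_{01}''+G_{10}''$ with the correct parity, since a factor of $2$ lost or gained there changes $\pmod 4$ to $\pmod 2$. This requires a careful bookkeeping of which pairs $(i,j)$ with $i\ne j$ pair up under transposition (contributing even multiples) versus the diagonal terms $i=j$ (whose exponents $\alpha_i\cdot\alpha_i=0$, $\beta_i\cdot\beta_i=0$ make $F$- and $H$-diagonal terms vanish but leave $G$-diagonal terms $\varepsilon_i^2(t^{\alpha_i\cdot\beta_i}-1)$ alive). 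A secondary subtlety is verifying that $I'(\kappa;1)=0$ and $I\!I'(\kappa;1)=0$ really are established facts in \cite{HNNS1}--\cite{HNNS4} in the form I need; if only weaker statements are available I would instead prove the derivative vanishings directly from the reciprocity in Lemma~\ref{lem23} and an analogous reciprocity for the $G$'s obtained by applying Theorem~\ref{thm52} to $K^*$.
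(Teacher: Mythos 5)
There is a genuine gap in your treatment of parts (ii) and (iii), and it propagates to (iv) and (v). For (ii), the only linear relation the closure gives you is $F_{01}'(K;1)+G_{00}'(K;1)-G_{11}'(K;1)-H_{01}'(K;1)=0$, and your plan to ``pair this with a second application to $-K$ or $K^*$'' produces nothing new: applying the same identity to $\widehat{K^\#}$ (or $\widehat{-K}$) and rewriting via $F_{10}'(K;1)=-F_{01}'(K;1)$, $H_{10}'(K;1)=-H_{01}'(K;1)$ from Lemma~\ref{lem23}(i) returns exactly the same equation, while $K^*$ returns its negative. So you can never isolate $G_{00}'(K;1)=G_{11}'(K;1)=0$ from closure data plus symmetries; only the combination $G_{11}'-G_{00}'=F_{01}'-H_{01}'$ is visible there. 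Likewise for (iii): in the first derivative of the $I\!I$-identity the $G$-terms enter as $G_{01}(K;t)+G_{01}(K;t^{-1})$ and $G_{10}(K;t)+G_{10}(K;t^{-1})$, whose derivatives at $t=1$ vanish identically, and the $F_{00},F_{11},H_{00},H_{11}$ terms vanish by reciprocity, so that identity reduces to $0=0$ and gives no constraint on $G_{01}'(K;1)+G_{10}'(K;1)$. Your fallback of deducing a ``reciprocity for the $G$'s from Theorem~\ref{thm52} applied to $K^*$'' also fails, since that theorem relates $G_{ab}(K^*;t)$ to $G_{ab}(K;t^{-1})$, not $G_{ab}(K;t^{-1})$ to $G_{ab}(K;t)$. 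What is actually needed (and what the paper does) is a direct computation on an untwisted diagram: with $\sum_{i\in I_0}\e_i=\sum_{j\in I_1}\e_j=0$, one has $G_{00}'(K;1)=\bigl(\sum_{i\in I_0}\e_i\alpha_i\bigr)\cdot\bigl(\sum_{j\in I_0}\e_j\beta_j\bigr)$, and substituting $\beta_j=\gamma_D-\alpha_j$ kills the $\gamma_D$-part by untwistedness and the rest by antisymmetry of the intersection form; (iii) follows similarly from $\alpha_i\cdot\beta_j+\alpha_j\cdot\beta_i=(\alpha_i+\alpha_j)\cdot\gamma_D$. Once (ii) is established this way, your relation from $I'(\widehat K;1)=0$ does give (iv) exactly as you intend (the ``second relation'' is unnecessary), which is also the paper's route.

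Parts (i) and (v) are essentially on track. For (i) your argument coincides with the paper's (citing $W'(\kappa;1)=0$). For (v) your structure matches the paper, but the parity of $G_{01}''(K;1)+G_{10}''(K;1)$ does not come from any transposition or reciprocity symmetry as you suggest; it comes from the elementary fact that a Laurent polynomial $P$ with $P(1)=0$ and $P'(1)=0$ has $P''(1)=\sum_k k(k-1)a_k$ even (the paper invokes this as Lemma~3.5 of \cite{HNNS3}, applied to $P=G_{01}+G_{10}$ using (iii)), after which the known congruence $I\!I''(\kappa;1)\equiv 0\pmod 4$ transfers the statement to $F_{00}''+F_{11}''+H_{00}''+H_{11}''$. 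Note also that (v) depends on (iii), so the gap above must be repaired before this part stands.
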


\begin{proof}
We may assume that $D$ is untwisted; that is, 
$\sum_{i\in I_0}\e_i=\sum_{j\in I_1}\e_j=0$. 

(i) It was proved in~\cite[Proposition~4.2]{ST} that 
$W'(\kappa;1)=0$ holds for any virtual knot $\kappa$. 
Therefore, the equation follows from Proposition~\ref{prop101}(i).

(ii) It holds that 
\begin{align*}
G'_{00}(K;1)&=g'_{00}(D;1)
=\sum_{i,j\in I_0}\e_i\e_j(\alpha_i\cdot\beta_j)\\
&=\Biggl(\sum_{i\in I_0}\e_i\alpha_i\Biggr)
\cdot\Biggl(\sum_{j\in I_0}\e_j\beta_j\Biggr)
=\Biggl(\sum_{i\in I_0}\e_i\alpha_i\Biggr)
\cdot\Biggl(\sum_{j\in I_0}\e_j(\gamma_D-\alpha_j)\Biggr)\\
&=\left(\sum_{i\in I_0}\e_i\alpha_i\right)
\cdot\Biggl(\biggl(\sum_{j\in I_0}\e_j\biggr)\gamma_D-\sum_{j\in I_0}\e_j\alpha_j\Biggr)\\
&=-\left(\sum_{i\in I_0}\e_i\alpha_i\right)
\cdot\Biggl(\sum_{j\in I_0}\e_j\alpha_j\Biggr)=0.
\end{align*}
By Theorem~\ref{thm52}(i), we have $G_{11}(K;t)=G_{00}(K^\#;t)$, 
and hence $G'_{11}(K;1)=0$. 

(iii) 
Since it holds that 
\[
\alpha_{i}\cdot\beta_{j}+\alpha_{j}\cdot\beta_{i}
=\alpha_{i}\cdot(\gamma_D-\alpha_{j})+\alpha_{j}\cdot(\gamma_{D}-\alpha_{i})
=\alpha_{i}\cdot\gamma_{D}+\alpha_{j}\cdot\gamma_{D}
\]
for any $i,j$, 
we have 
\begin{align*}
&G'_{01}(K;1)+G'_{10}(K;1)
=g'_{01}(D;1)+g'_{10}(D;1) \\
&=\sum_{i\in I_{0},\,j\in I_{1}}\e_{i}\e_{j}(\alpha_{i}\cdot\beta_{j})
+\sum_{i\in I_{1},\,j\in I_{0}}\e_{i}\e_{j}(\alpha_{i}\cdot\beta_{j})
\\
&=\sum_{i\in I_{0},\,j\in I_{1}}\e_{i}\e_{j}(\alpha_{i}\cdot\beta_{j}+\alpha_{j}\cdot\beta_{i})
 \\
&=\sum_{i\in I_{0},\,j\in I_{1}}\e_{i}\e_{j}(\alpha_{i}\cdot\gamma_{D}
+\alpha_{j}\cdot\gamma_{D})
 \\%
&=\Biggl(\sum_{j\in I_{1}}\e_{j}\Biggr)
\Biggl(\sum_{i\in I_{0}}\e_{i}(\alpha_{i}\cdot\gamma_{D})\Biggr)
+\Biggl(\sum_{i\in I_{0}}\e_{i}\Biggr)
\Biggl(\sum_{j\in I_{1}}\e_{j}(\alpha_{j}\cdot\gamma_{D})\Biggr)=0.
\end{align*}

(iv) 
It was proved in~\cite[Theorem~3.2]{HNNS3} that $I'(\kappa;1)=0$ holds 
for any virtual knot~$\kappa$. 
By Proposition~\ref{prop101}(ii), 
we have 
\[F_{01}'(K;1)+G_{00}'(K;1)-G_{11}'(K;1)-H_{01}'(K;1)=0.\]
Therefore, it follows from (ii) that $F_{01}'(K;1)=H_{01}'(K;1)$. 
Furthermore, 
we have $F_{10}'(K;1)=H_{10}'(K;1)$ by Theorem~\ref{thm52}(i). 

(v)
We put 
\[
P(K;t)=G_{01}(K;t)+G_{10}(K;t) \text{ and } Q(K;t)=P(K;t)+P(K;t^{-1}). 
\]
Then we have $P(K;1)=0$ by definition 
and $P'(K;1)=0$ by (iii). 
Therefore, it follows from \cite[Lemma~3.5]{HNNS3} that 
$Q''(K;1)\equiv0\pmod{4}$. 
Since $I\!I''(\kappa;1)\equiv0\pmod{4}$ holds 
for any virtual knot~$\kappa$~\cite[Theorem~3.3]{HNNS3}, 
we have 
\begin{align*}
&F''_{00}(K;1)+F''_{11}(K;1)+H''_{00}(K;1)+H''_{11}(K;1)+Q''(K;1)\\
&\equiv F''_{00}(K;1)+F''_{11}(K;1)+H''_{00}(K;1)+H''_{11}(K;1)
\equiv0\pmod{4} 
\end{align*}
by Proposition~\ref{prop101}(iii). 
\end{proof}



\end{document}